\renewcommand{\baselinestretch}{1.2}
\theoremstyle{plain}
\newtheorem{lemma}{Lemma}[section]
\newtheorem{theorem}[lemma]{Theorem}
\newtheorem{proposition}[lemma]{Proposition}
\newtheorem{corollary}[lemma]{Corollary}
\theoremstyle{definition}
\newtheorem{definition}[lemma]{Definition}
\newtheorem{example}[lemma]{Example}
\newtheorem{remark}[lemma]{Remark}
\newtheorem*{definition*}{Definition}
\theoremstyle{remark}
\newtheorem{convention}[lemma]{Convention}
\newcommand{\dd}{\mathrm{d}}
\newcommand{\X}{\mathfrak {X} }
\newcommand{\E}{\mathcal{E}}
\newcommand{\lb}{\left[ \cdot\,,\cdot\right] }
\title{A series of Nash resolutions of a singular foliation}
\author{Ruben Louis\\  \href{ruben.louis@mathematik.uni-goettingen.de}{ruben.louis@mathematik.uni-goettingen.de}\thanks{Department of Mathematics, Jilin University, Changchun 130012, Jilin, China.}~\thanks{Institut f\"ur Mathematik, Georg-August-Universit\"at G\"ottingen, G\"ottingen, Germany.}}
\date{}
\begin{document}

\maketitle

\begin{abstract}
We construct a series of blowups $(\widetilde M_i,\pi_i)_{i\in \mathbb N_0}$ of a singular foliation by applying to the universal Lie $\infty$-algebroid of a singular foliation  
the so-called Nash modification.
For $i=0$, we recover a blowup introduced Sinan Sertöz, and for $i=1$,  we recover a notion due to Omar Mohsen.  
One of the important features is that any singular foliation becomes a Debord foliation (= projective singular foliation) after one blowup.
Examples are also given.
\end{abstract}

\scalebox{0.8}{\textbf{Keywords}: Singular foliations, singularities, Nash blowup, homotopy Lie algebras.}
\tableofcontents

\section*{Introduction}
Singular foliations generalize the notion of regular foliations by allowing leaves of different dimensions. They arise frequently in differential or algebraic geometry. Here, as in \cite{LLL}, we unify \cite{Hermann,AndroulidakisIakovos,AndroulidakisZambon,Cerveau, Debord,LLS} in smooth differential geometry and \cite{zbMATH03423310,Ali-Sinan} in holomorphic differential geometry by defining a singular foliation on a smooth, complex, algebraic, or real analytic manifold $M$,  with sheaf of functions $\mathcal O$, to be a subsheaf $\mathfrak F \colon U\longrightarrow\mathfrak F(U )$ of the sheaf of vector fields $\mathfrak X$, which is closed under the Lie bracket and locally finitely generated as an $\mathcal O$-module. 
By Hermann's theorem \cite{Hermann}, this is enough to induce a partition of the manifold $M$ into {immersed} submanifolds of possibly different dimensions, called \emph{leaves} of the singular foliation. Singular foliations appear for instance as orbits of Lie group actions, with possibly different dimensions.  
In the realm of Poisson Geometry, we encounter a particularly intricate class of singular foliations known as “the symplectic leaves of a Poisson structure”, see \cite{LPV,CFM}. When all the leaves have the same dimension, we recover the usual \textquotedblleft regular foliations\textquotedblright \cite{DiederichKlas, LLL}. We refer to \cite{LLL} Section I.4 for a detailed list of examples.

In this paper, we address  blowups of a singular foliation  $\mathfrak F  $ on $M$, i.e.,  pairs $(\widetilde M, \pi)$ such that
\begin{enumerate}
\item $\pi\colon \widetilde M \to M$ is onto and proper;
\item the restriction $\pi|_{M_{\mathrm{reg}, \mathfrak F}} \colon \pi^{-1}(M_{\mathrm{reg}, \mathfrak F}) \to M_{\mathrm{reg}, \mathfrak F}$ to the regular points $M_{\mathrm{reg}, \mathfrak F}$ of $\mathfrak F$ is one-to-one,
\item
 the pullback $\pi^! \mathfrak F$ of $\mathfrak F$ on $\widetilde M$ exists and satisfies $\pi^! \mathfrak F |_{\pi^{-1}(M_{\mathrm{reg}, \mathfrak F})} \simeq \mathfrak F |_{M_{\mathrm{reg},\mathfrak F}}$. 
 \end{enumerate}
 This notion has been studied in various forms by many authors \cite{NistorVictor2019DoLg,DebordClaire2017BcfL, MohsenOmar}.  We 
consider singular foliations, which admit (locally) geometric resolutions, i.e., those for which there exists an anchored complex of vector bundles 

$$ (E,\dd,
\rho) \colon  \xymatrix{  \ar[r] & E_{-i-1} \ar[r]^{{\dd^{(i+1)}}} \ar[d] & 
     E_{ -i} \ar[r]^{{\dd^{(i)}}}
     \ar[d] & E_{-i+1} \ar[r] \ar[d] & \ar@{..}[r] & \ar[r]^{{\dd^{(2)}}}& E_{-1} \ar[r]^{\rho} \ar[d]& TM \ar[d] \\ 
      \ar@{=}[r] & \ar@{=}[r] M  &  \ar@{=}[r] M 
      &  \ar@{=}[r] M  &\ar@{..}[r] & \ar@{=}[r]   &  \ar@{=}[r] M  & M}
    $$
    such that the following complex of sheaves      \begin{equation}
           \label{}{\longrightarrow} \Gamma({E_{ -i-1}})
     \stackrel{\dd^{(i+1)}}{\longrightarrow} \Gamma({E_{-i}})
     \stackrel{\dd^{(i)}}{\longrightarrow}{\Gamma(E_{-i+1})}{\longrightarrow}\cdots  {\longrightarrow} \Gamma(E_{-1})  
     \stackrel{\rho}{\longrightarrow} \mathfrak F.
      \end{equation}  is exact. 
      Those singular foliations were introduced and studied by Laurent-Gengoux, Lavau, and Strobl \cite{LLS}.  It is a quite natural to work with this class of singular foliation, as it contains the class of (locally) real analytic singular foliations. It is also a natural object in the holomorphic setting, since $\mathfrak F$ is then a coherent sheaf and such geometric resolutions always exist locally.  
      
      We will use geometric resolutions to construct a  sequence of blowups $\left(\widetilde M_i,\pi_i\right)_{i\in \mathbb N_0}$.
There is a very long story behind our construction. Let us be precise on the relations with other works : we were first inspired by Omar Mohsen \cite{MohsenOmar}, who introduced a notion of  blowup of a smooth manifold along the singular leaves of a singular foliation which does not consist of blowing up along a singular leaf as in \cite{Wang-Tang} or \cite{DebordClaire2017BcfL} or gluing Lie groupoids as in \cite{NistorVictor2019DoLg}. The construction of Mohsen extends an older idea that consists in  replacing every singular point of a singular foliation by the limiting positions of the tangent spaces of the nearly regular leaves. This  method goes back to the mathematician J. Nash \cite{Nobile}, and is mainly used in algebraic geometry for desingularisation of affine varieties or schemes. To the best of my knowledge,  Sinan Sertöz \cite{Ali-Sinan} was the first to apply this method in his Ph.D. dissertation  to compute the Baum-Bott residues of singular holomorphic foliations. A more general construction fo generic coherent sheaves was done earlier by \cite{Rossi}. For further details, see also \cite{JPTS}. In fact, Sinan Sertöz went further by applying the Nash construction to coherent subsheaves of locally free sheaves, thereby generalizing the work of Nobile \cite{Nobile}.

In the complex setting, therefore, our blowups $\left(\widetilde M_i,\pi_i\right)$  coincide for $i=0,1$ with blowup of $ \mathfrak F$, seen as a holomorphic coherent sheaf as in \cite{Rossi,Ali-Sinan, JPTS}. Also, in the smooth setting, we will see that $\widetilde M_1$ is the blowup space of the singular foliation  $(M, \mathfrak F)$  defined by of O. Mohsen \cite{MohsenOmar}. 
    In general, the blowup spaces  $(\widetilde M_i)_{i\geq 0}$ are  Nash blowups (also called Nash modification) of coherent sheaves. This coincidence has practical consequences: for instance, the smoothness of the $\widetilde M_i$'s can be studied using classical results on Nash modifications, as in \cite{Ali-Sinan}.

  Now,  when dealing with a singular foliation, rather than just a coherent sheaf or a sheaf that admits a geometric resolution, additional structures emerge. Specifically, this context gives rise to Lie algebroids or Lie $n$-algebroid structures. For instance,
    we will see that $\pi_1^!\mathfrak F$ is  always a Debord singular foliation on $ \widetilde{M}_1$, meaning it is the image of a Lie algebroid whose anchor map is injective on an open dense subset. 
It is shown in \cite{LLS,CLRL} that “behind” any singular foliation admitting a geometric resolution there is a Lie $\infty$-algebroid constructed over a geometric resolution $(E,\dd,\rho)$ of $\mathfrak{F}$, which is unique up to homotopy. The latter is referred to as an universal Lie $\infty$-algebroid of $\mathfrak F$. 
For instance, it was employed by S. Lavau in \cite{SL} to define the modular class of a singular foliation or \cite{RL_symmetries} to study symmetries of singular foliations, see also \cite{CLG-Ryv, Karandeep} for other applications.

The existence of a structure of a universal Lie $\infty$-algebroid on geometric resolutions has consequences for Nash modifications. In fact, this structure is necessary to check that the blowups $(\widetilde{M}_i,\pi_i) $ satisfy the third condition in a definition of a blowup, namely that the pull-back of $\mathfrak F$ exists and is a singular foliation.
More precisely, we do not apply the Nash modification idea  directly on the tangent space of our singular foliation $\mathfrak{F}\subseteq \mathfrak X(M)$ but on the images of the differential maps $\dd^{(i+1)}\colon E_{-i-1}\to E_{-i},\; i\geq 1$ and of the anchor map $\rho\colon E_{-1}\rightarrow TM$ of a geometric resolution $(E,\dd,\rho)$. 
 The choice of considering the images of all the $\dd^{(i+1)}$'s allows generalizing the Nash construction to the whole universal  Lie $\infty$-algebroid of the singular foliation built over a geometric resolution. As we said, we then recover several notions of resolution of singularities for $i=0,1$. 
 But for $i\geq 2$, these resolutions have never been introduced before to our knowledge. A consequence of our construction for $i=1$ is that a resolution of any singular foliation can be constructed, which is given by an action of a Lie algebroid whose anchor map is injective on a dense open subset (a result implicit in \cite{MohsenOmar} but not stated as such there). For generic $i$, one obtains a singular foliation which is the image of the anchor map of a Lie $i$-algebroid.

 In general, we must admit that very often the blowup space $\widetilde M_i$'s are not smooth manifolds. However, our singular foliation on $\widetilde M_i$ makes sense and admits leaves that are smooth submanifolds. Also, the blowup spaces $\widetilde M_i$ are analytic varieties if the initial singular foliation admits real analytic generators. Last, we are able to decide when $ \widetilde{M}_i$ is smooth: it suffices to study the properties of some ideal of functions that we describe in the text.
 

The paper is structured as follows: In Section \ref{sec:1}, we revisit the concept of singular foliations and their universal Lie $\infty$-algebroids. Section \ref{sec:2} presents the Nash blowup construction for  vector bundle morphisms and discuss smoothness. In Section \ref{Nash-foliations}, we introduce a series of Nash blowups of a singular foliation, indexed by $i\in \mathbb N_0$, followed by the main theorems. In  section \ref{sec:Proofs}, we prove the results of Section \ref{Nash-foliations}. In Section \ref{sec:Examples}, we provide examples of our constructions and demonstrate how the usual notions of blowups for affine varieties can be recovered. Finally, in order to fix notations,  we review in Appendix \ref{appendix} the definition and properties of Grassmann bundles.

\section{Preliminaries: Singular foliations and Lie $\infty$-algebroids}\label{sec:1}
\begin{convention}
 Throughout the article, $\mathcal O$ stands for the sheaf of (smooth, polynomial,
real analytic or holomorphic) functions on (a manifold, affine variety—depending on the context) $M$ and, for a vector
bundle $E\to M$ of constant rank, $\Gamma(E)$ stands for the sheaf of sections of $E$. Also, $\mathbb K\in \{\mathbb R,\mathbb C\}$. The results of this paper apply to the smooth, algebraic, real analytic, and holomorphic contexts, with some adaptations. However, for simplicity, we will primarily work in the smooth setting.
\end{convention}

We refer the reader to \cite{AndroulidakisIakovos,AndroulidakisZambon,Cerveau, Debord,LLS,LLL} for the topic of singular foliations, in particular to \cite{LLS, CLRL} for the notion of universal Lie $\infty$-algebroids. For Lie algebroids, see \cite{Mackenzie-Kirill}.
\subsection{Singular foliations}\label{def}We recall some basic definitions and properties on singular foliations. 
\begin{enumerate}

    \item A \emph{singular foliation} on a manifold $M$
    is a subsheaf $\mathfrak{F}\subseteq\mathfrak{X}(M)$  that fulfills the following conditions,
\begin{enumerate}
     \item \emph{Stability under Lie bracket} : $[\mathfrak{F},\mathfrak{F}]\subseteq \mathfrak{F}$.
     \item $\mathfrak F$ is a \emph{module} over its respective relevant sheaf of functions.
     \item \emph{Locally finitely generateness}\footnote{In the holomorphic case, this condition can be equivalently replaced by the notion of coherent sheaf \cite{zbMATH03423310,Ali-Sinan}.} : every $m\in M$ admits an
open neighborhood $\mathcal{U}$ together with a finite number of vector fields  $X_1, \ldots, X_k \in \mathfrak{X}(\mathcal U)$ such that for
every open subset $\mathcal{V}\subseteq \mathcal U$ the vector fields ${X_1}{|_\mathcal{V}},\ldots, {X_k}{|_\mathcal{V}}$ generates $\mathfrak{F}$ on $\mathcal{V}$ as a module over functions on $\mathcal V$. 
\end{enumerate}

We are particularly interested in three specific classes of singular foliations, which we  now define.

 \begin{enumerate}
     \item[-] A \emph{locally polynomial/analytic singular foliation} is a singular foliation over a smooth or complex manifold which admits, around each point, generators with polynomial/analytic coefficients  in some local chart.

     \item[-] A \emph{globally finitely generated singular foliation $\mathfrak F\subseteq \mathfrak X(M)$} is  a singular foliation which is generated as an $\mathcal{O}$-submodule of $\mathfrak X(M)$ by finitely many vector fields on $M$.
\item[-] 
    A singular foliation $\mathfrak F$ is \emph{Debord} if it is projective as a module over functions on $M$, equivalently  if and only if there exists a Lie algebroid $(A, \lb_A, \rho)$ such that $\rho(\Gamma(A))=\mathfrak F$ whose anchor is injective on an open dense subset.  In particular, Debord foliations are globally  finitely generated.
    
\end{enumerate}

\item Here are some important features of the above definition in the smooth/real analytic/complex cases, see \cite{LLL}, Section 1.7.
\begin{itemize}
     \item[-] Singular foliation admits leaves : there exists a partition of $M$ into submanifolds called \emph{leaves} such that for all $m\in M$, the image of the evaluation map $\mathfrak F \to T_m M  $  is the tangent space of the leaf through $m$.
     \item[-] \emph{Singular foliations are self-preserving}: the flow $\phi^X_t$ of vector fields $X\in\mathfrak F $, whenever defined, preserves $\mathfrak F$ \cite{Hermann, AndroulidakisIakovos,GarmendiaAlfonso}, i.e., \begin{equation*}
         \forall\, m\in M, \exists\, \epsilon >0\; \text{such that}\; \forall t\in ]-\epsilon,\epsilon[, \; {(\phi^X_t)}_*(\mathfrak F)=\mathfrak F.
     \end{equation*}
\end{itemize}
\end{enumerate}
\subsubsection{Nagano-Sussman theorem}\label{Nagano}
We introduce the following definitions, which are particular cases of a more general notion applied to singular spaces studied in  \cite{Sniatycki}.  Let $S$ be a closed subset of a manifold $N$.
\begin{enumerate}
 \item \label{def:tangent}A \emph{vector field on} $S\subseteq N$  is the restriction to $S$ of a vector fields $Z\in \mathfrak X(N)$  whose flow preserves $S$, i.e., $\phi^{Z}_t(S)\subseteq S$ whenever it makes sense. In that case, we shall say that such a \emph{$Z$ is tangent to $S$}. The set of vector fields on $S$ form a Lie algebra that we  denote as in the usual case by $\mathfrak X(S)$.
        \item The \emph{tangent space} $T_sS$  of $S$ at $s\in  S$ is the evaluation at $s$ of the vector fields on $S$.
        \item \label{ref:LA} We also make sense of the notion of Lie algebroid on a closed subset  $S\subseteq M$ as follows: a \emph{Lie algebroid over $S$} is a locally finitely generated projective Lie-Rinehart algebra over $\mathcal{O}/\mathcal{I}_S$. Here, $\mathcal{I}_S$ is the ideal of vanishing functions on $S$.
\end{enumerate}

\begin{remark}
    Notice that when $S$ is a  submanifold or an analytic subvariety of a complex or real analytic manifold,  this notion of vector field or Lie algebroid on $S$ agrees to the usual case.
\end{remark}

Now,  we recall a crucial theorem that allows to define singular foliations correctly on a closed subset $S$ of a manifold $M$.

\begin{definition}\cite{LLL, LLS}\label{def:leaves}\label{def:SF2} Let $S$ be a closed subset of $N$.
\begin{enumerate}
    \item  A \emph{singular foliation on $S$} is an involutive\footnote{Notice that $\mathfrak{F}$ is generated by the restrictions to $S$ of vector fields on $N$, they are  required to be involutive only after restrictions to $S$.} locally finitely generated $\mathcal{O}_S$-submodule $\mathfrak{F}\subseteq\mathfrak X(S)$.
    \item For $s\in S$, the \emph{leaf} of a singular foliation $\mathfrak{F}$ on $S$ through $s$ is the set 
\begin{equation}
        L_s:=\left\{\phi_{t_1}^{Z_1}\circ \phi_{t_2}^{Z_2}\circ \cdots \circ \phi_{t_k}^{Z_k}(s),\; t_1, \ldots, t_k\in \mathbb R\; \right\}
    \end{equation}
 Above, $Z_1,\ldots,Z_k$ are vector fields on $N$  whose restrictions are in $ \mathfrak{F}$. We implicitly assume that the flows are defined.
\end{enumerate}
\end{definition}
\begin{remark}
    From  Definition \ref{def:leaves}, it is easily checked that being in the same leaf is an equivalence relation on $S$, hence the leaves induce a partition of $S$.
\end{remark}

The notion of leaves of singular foliation on $S\subseteq N$ is justified by the following theorem that generalizes the Stefan-Sussman theorem \cite{Stepan1,Stepan2}, that says the leaves are smooth manifolds.

\begin{theorem}\label{thm:Sussmann_SP}Let $\mathfrak{F}$ be a singular foliation on a closed subset  $S\subseteq N$. The leaves $\mathfrak F$ form a partition of $S$ into connected manifolds,  immersed as submanifolds of $N$.
\end{theorem}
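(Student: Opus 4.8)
The plan is to run the classical Stefan--Sussmann argument, exploiting the fact that although $S$ need not be a manifold, the local generators of $\mathfrak F$ and their flows live on the ambient manifold $N$ and merely happen to preserve $S$; the plaques produced below will therefore be genuine immersed submanifolds of $N$ that lie inside $S$. The first step is to upgrade involutivity to self-preservation. Fix $s\in S$ and local generators $X_1,\dots,X_k$ of $\mathfrak F$ near $s$, i.e.\ restrictions to $S$ of vector fields $Z_1,\dots,Z_k\in\mathfrak X(N)$ tangent to $S$. Using local finite generation over $\mathcal O_S$ together with $[\mathfrak F,\mathfrak F]\subseteq\mathfrak F$ one checks, exactly as in the manifold case (cf.\ the self-preservation property recalled above and \cite{Hermann,GarmendiaAlfonso,LLL}), that for each generator $X=Z|_S$ the flow $\phi^{Z}_t$ satisfies $(\phi^{Z}_t)_*\mathfrak F=\mathfrak F$ for small $t$. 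Consequently, if $\phi$ is any finite composition of such flows with $\phi(s)=x$, then $\mathrm{d}\phi_s(\mathfrak F_s)=\mathfrak F_x$, so the number $r_s:=\dim\mathfrak F_s$ (the dimension of the evaluation $\mathfrak F_s\subseteq T_sN$) is constant along the leaf $L_s$.

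Next I would construct local plaques. After reordering, assume $X_1|_s,\dots,X_r|_s$ (with $r=r_s$) is a basis of $\mathfrak F_s$, and set
\[
\Phi(t_1,\dots,t_r):=\phi^{Z_1}_{t_1}\circ\cdots\circ\phi^{Z_r}_{t_r}(s),\qquad t\ \text{near}\ 0\in\mathbb R^{r}.
\]
Since $\mathrm{d}\Phi_0(\partial_{t_i})=X_i|_s$, the map $\Phi$ is an immersion near $0$, so $N_s:=\mathrm{Im}\,\Phi$ is an $r$-dimensional submanifold of $N$, embedded near $s$; as the $\phi^{Z_i}_{t_i}$ preserve $S$ and $L_s$, we get $N_s\subseteq S$ and $N_s\subseteq L_s$. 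At $x=\Phi(t)$ each $\mathrm{d}\Phi_t(\partial_{t_i})$ is the pushforward of a generator of $\mathfrak F$ by a composition of flows of elements of $\mathfrak F$, hence lies in $\mathfrak F_x$ by the self-preservation step; thus $T_xN_s\subseteq\mathfrak F_x$, and comparing dimensions ($\dim T_xN_s=r=\dim\mathfrak F_x$) gives $T_xN_s=\mathfrak F_x$ for every $x\in N_s$.

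It then remains to glue the plaques. If two plaques $N_s$, $N_{s'}$ both pass through a point $x$, then the generators $X_1,\dots,X_r$ associated with $x$ take values in $\mathfrak F_y=T_yN_s$ (resp.\ $T_yN_{s'}$) at every nearby $y$, so by uniqueness of integral curves the flows $\phi^{Z_i}_{t_i}$ preserve $N_s$ and $N_{s'}$ locally; hence the plaque $N_x$ through $x$ is contained in both, and a dimension count forces $N_s$, $N_{s'}$ and $N_x$ to coincide on a neighbourhood of $x$. Therefore the plaques form a basis of a topology on each leaf $L_s$, and, carrying their submanifold structures, they serve as charts whose transition maps---being restrictions of $\mathrm{id}_N$ between overlapping embedded pieces---are smooth; this endows $L_s$ with a smooth structure for which $L_s\hookrightarrow N$ is an immersion with image in $S$. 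Connectedness of $L_s$ is clear, since any two of its points are joined by a concatenation of flow lines, and since ``belonging to the same leaf'' is an equivalence relation (as already observed), the leaves partition $S$.

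The genuinely delicate point is the first step: verifying that on a possibly singular $S$ the involutive, $\mathcal O_S$-finitely generated module $\mathfrak F$ is still self-preserving, so that $\dim\mathfrak F_x$ is constant along leaves. Once that is available, the plaque construction and the gluing proceed exactly as in the smooth Stefan--Sussmann theorem, the only additional bookkeeping being the (easy) observation that every plaque, being built from flows on $N$, automatically lies inside $S$.
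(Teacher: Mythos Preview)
Your argument is essentially the classical Stefan--Sussmann plaque construction, and it can be made to work, but the paper takes a much shorter and conceptually different route. The paper simply applies the Nagano--Sussmann orbit theorem on the ambient manifold $N$ to the $\mathcal O_N$-module generated by the lifts $Z_1,\dots,Z_k$ of a set of local generators: that theorem guarantees, with \emph{no involutivity hypothesis}, that each orbit is an immersed connected submanifold of $N$; since the flows of the $Z_i$ preserve $S$, the orbit through a point of $S$ coincides with the leaf of $\mathfrak F$ and stays inside $S$. This sidesteps completely the step you flag as ``genuinely delicate'', namely the self-preservation property $(\phi^{Z}_t)_*\mathfrak F=\mathfrak F$ on a possibly singular $S$: that step is never needed, because Nagano--Sussmann does not require the rank of the distribution to be constant along orbits, nor does it need the bracket closure. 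Your route does work---the pointwise ODE argument for $(\phi^{Z}_{-t})_* Z_j|_p$ only uses values of $[Z,Z_j]$ at points of $S$, where the involutivity relations hold, so the span $\mathfrak F_p$ is preserved along the leaf---but you are effectively reproving a weaker form of Nagano--Sussmann by hand, and the involutivity you invoke is, from the paper's perspective, unnecessary for this particular statement.
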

The explanation of this result is based on a very strong theorem known as the Nagano-Sussman theorem \cite{Nagano}. This theorem, widely used in control theory, provides a very strong result regarding the smoothness of the orbits of a finite number of vector fields on a manifold  without any assumptions. 
       \begin{theorem}[Nagano–Sussmann] Let $\mathcal{V}\subseteq \mathfrak X(N)$ be a locally finitely generated $\mathcal{O}_N$-submodule of vector fields on a manifold $N$. For every $\ell\in N$, the set
       $$\left\{\phi_{t_1}^{Z_1}\circ \phi_{t_2}^{Z_2}\circ \cdots \circ \phi_{t_n}^{Z_n}(\ell),\; t_1, \ldots, t_n\in \mathbb R,\; Z_1,\ldots, Z_n\in \mathcal{V},\;n\in \mathbb N\right\}$$

     is a connected immersed submanifold of $N$.
       \end{theorem}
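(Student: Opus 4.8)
The plan is to prove Sussmann's orbit theorem by exhibiting an explicit tangent distribution along the orbit together with local charts coming from flows of pushed-forward generators. Throughout fix $\ell\in N$, write $\mathcal O_\ell$ for the set appearing in the statement, and let $G$ be the group of (local) diffeomorphisms of $N$ generated by all the flows $\phi^Z_t$ with $Z\in\mathcal V$ and $t\in\mathbb R$, defined wherever the flows exist. By construction $\mathcal O_\ell=\{\,g(\ell): g\in G\,\}$, and $G$ acts transitively on $\mathcal O_\ell$.

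First I would introduce the candidate tangent space. For $p\in N$ set
\[
 D_p:=\mathrm{span}_{\mathbb R}\bigl\{\,(\Phi_*Z)(p) : \Phi\in G,\ Z\in\mathcal V\,\bigr\}\subseteq T_pN,
\]
where $\Phi_*Z$ is the pushforward of $Z$ by $\Phi$. The first key point is the $G$-invariance of this assignment: for every $\Phi\in G$ one has $T_p\Phi\,(D_p)=D_{\Phi(p)}$. This is immediate from the group law, since $T_p\Phi\bigl((\Psi_*Z)(p)\bigr)=\bigl((\Phi\Psi)_*Z\bigr)(\Phi(p))$ and $\Phi\Psi$ runs over all of $G$ as $\Psi$ does. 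Combined with transitivity of the $G$-action, invariance forces $\dim D_p$ to be constant --- say equal to $k$ --- along $\mathcal O_\ell$. Here local finite generation of $\mathcal V$ is convenient (though not strictly necessary): near any point $\mathcal V$ is generated by finitely many vector fields, so $D$ is spanned by finitely many smooth vector fields and the constructions below are genuinely smooth.

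Next I would build charts. Fix $p\in\mathcal O_\ell$ and choose pushforwards $V_1=(\Phi_1)_*Z_1,\dots,V_k=(\Phi_k)_*Z_k$ whose values $V_1(p),\dots,V_k(p)$ form a basis of $D_p$. Consider
\[
 \psi(t_1,\dots,t_k)=\phi^{V_1}_{t_1}\circ\cdots\circ\phi^{V_k}_{t_k}(p),
\]
defined near $0\in\mathbb R^k$. Its differential at $0$ sends the coordinate basis to $V_1(p),\dots,V_k(p)$, hence is injective, so after shrinking the domain $\psi$ is an immersion. Since $\phi^{(\Phi_i)_*Z_i}_{t}=\Phi_i\circ\phi^{Z_i}_{t}\circ\Phi_i^{-1}\in G$, the image of $\psi$ lies in $\mathcal O_\ell$. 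Declaring each such $\psi$ a diffeomorphism onto its image equips $\mathcal O_\ell$ with charts, hence a candidate smooth structure and topology.

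The technical heart --- and the step I expect to be the main obstacle --- is proving that these charts are mutually compatible, so that $\mathcal O_\ell$ is a bona fide immersed submanifold rather than a loose union of overlapping slices. The decisive ingredient is that $D$ is invariant under the flows of $\mathcal V$ (a special case of the $G$-invariance above): $(\phi^Z_t)_*D=D$. From this one deduces that a trajectory of any vector field whose values lie in $D$, started in one chart image, remains in the $k$-dimensional piece through $p$; this lets one show that overlaps of two chart images are open in each and that the transition maps are smooth. I would then check that $\mathcal O_\ell$ with this topology is Hausdorff and second countable --- the latter because $N$ is second countable and the connected set $\mathcal O_\ell$ is covered by countably many chart images --- so that the inclusion $\mathcal O_\ell\hookrightarrow N$ is an injective immersion. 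Connectedness is then automatic: every point of $\mathcal O_\ell$ is joined to $\ell$ by concatenating the flow curves $t\mapsto\phi^{Z_j}_t$ used to reach it, each a continuous path inside $\mathcal O_\ell$.
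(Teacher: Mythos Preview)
The paper does not prove this theorem: it is quoted as a classical result with a reference to the literature and then invoked to deduce Theorem~\ref{thm:Sussmann_SP}. There is therefore no ``paper's own proof'' to compare against.

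For what it is worth, your outline is the standard Sussmann argument and is correct in structure: define the orbit, introduce the $G$-invariant distribution $D$ spanned by pushforwards, use invariance to get constant rank along the orbit, build charts from compositions of flows, and verify compatibility via the flow-invariance of $D$. You correctly flag chart compatibility as the delicate step. One small remark: local finite generation of $\mathcal V$ plays no essential role in Sussmann's proof (the theorem holds for an arbitrary family of smooth vector fields), so your parenthetical ``though not strictly necessary'' is accurate and could be strengthened --- the hypothesis in the paper's statement is there for the application, not for the theorem itself.
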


\begin{proof}[Proof (of Theorem \ref{thm:Sussmann_SP})]

   For simplicity, let us assume that $\mathfrak{F}\subseteq \mathfrak X(S)$ is globally finitely generated (the general case is left to the reader). Let $\xi_1,\ldots, \xi_k$ be generators for $\mathfrak F$. By definition, the $\xi_i$'s are the restrictions to $S$ of vector fields $Z_i$'s on $N$ whose flows $\phi^{Z_i}_t$ preserves $S$, i.e.,   $\phi^{Z_i}_t(S)\subseteq S$ where the flows are defined. By Nagano-Sussman theorem, the orbits generated by the vector fields  $Z_1,\ldots, Z_k\in \mathfrak X(N)$  are immersed submanifolds of $N$. By assumption, the orbits through a point of $S$ of the $\xi_i$'s coincide with the orbits of the $Z_i$'s 
   and are included in $S$. This completes the proof.
\end{proof}

 \subsection{Universal Lie $\infty$-algebroid of a singular foliation}\label{sec:universal}Let us recall the notion of universal  Lie $\infty$-algebroid of a singular foliation. Let $\mathfrak{F}\subseteq\mathfrak{X}(M)$ be a submodule.
\begin{enumerate}
\item \label{def: geom.resol} A complex of vector bundles $(E, \dd, \rho)$
    $$   \xymatrix{  \ar[r] & E_{-i-1} \ar[r]^{{\dd^{(i+1)}}} \ar[d] & 
     E_{ -i} \ar[r]^{{\dd^{(i)}}}
     \ar[d] & E_{-i+1} \ar[r] \ar[d] & \ar@{..}[r] & \ar[r]^{{\dd^{(2)}}}& E_{-1} \ar[r]^{\rho} \ar[d]& TM \ar[d] \\ 
      \ar@{=}[r] & \ar@{=}[r] M  &  \ar@{=}[r] M 
      &  \ar@{=}[r] M  &\ar@{..}[r] & \ar@{=}[r]   &  \ar@{=}[r] M  & M}
    $$
    is said 
 to be a \emph{geometric resolution of  $\mathfrak F$} if the following complex is an exact sequence of sheaves:
      \begin{equation}
           \label{eq:exact}{\longrightarrow} \Gamma({E_{ -i-1}})
     \stackrel{\dd^{(i+1)}}{\longrightarrow} \Gamma({E_{-i}})
     \stackrel{\dd^{(i)}}{\longrightarrow}{\Gamma(E_{-i+1})}{\longrightarrow}\cdots  {\longrightarrow} \Gamma(E_{-1})  
     \stackrel{\rho}{\longrightarrow} \mathfrak F.
      \end{equation} A geometric resolution is said to be of \emph{finite length $n\in \mathbb N_0$} if $E_{-i}=0$ for all $i\geq n+1$. Also,  $(E, \dd, \rho)$ is said to be \emph{minimal} at a point $x\in M$ if, for all $i\geq 2$, the linear maps $\dd^{(i)}{|_x}\colon {E_{-i}}_{|_x}\longrightarrow  {E _{-i+1}}_{|_x}$ vanish. 
      
\item\label{almost}  An \emph{graded almost  Lie algebroid over $M$} is the datum of a complex $(E,\dd=\ell_1,\rho)$ of vector bundles over $M$ equipped with a graded symmetric degree $+1$ $\mathbb{K}$-bilinear \emph{bracket} $$ \ell_2 \colon \Gamma(E)\odot\Gamma(E)\rightarrow\Gamma(E) $$ such that:
\begin{enumerate}
\item  $\ell_2$ satisfies the \emph{Leibniz identity} with respect to  $\rho \colon \Gamma( E_{-1}) \longrightarrow \mathfrak{X}(M) $, i.e.,
\begin{equation*}
    \ell_2(x, fy) = f\ell_2(x,y)+\rho(x)[f]y
\end{equation*}
for all $x\in \Gamma(E_{-1}),y\in \Gamma(E)$ and $f\in\mathcal{O}$.
\item $\ell_1$ is degree $+1 $-derivation of $\ell_2 $, i.e., for all  $x \in \Gamma( E_{-i}), y \in \Gamma(E)$:
$$ \ell_1 (\ell_2(x,y)) +  \ell_2( \ell_1( x),y) + (-1)^{i}  \ell_2( x  , \ell_1( y)) = 0,$$ 

\item $\rho$ is a morphism, i.e., for all  $x,y \in \Gamma(E_{-1})$
$$ \rho (\ell_2 (x,y)) = [\rho (x), \rho (y)].$$
 \end{enumerate}
The $\mathcal{O}$-linear map $\rho$ is called the \emph{anchor map}, and $\ell_1$ the \emph{differential}.
\item  A \emph{Lie $\infty$-algebroid over $M$} is the datum of a sequence $E = (E_{-i} ),\,1\leq i<\infty$ of vector bundles over $M$ together with a structure of Lie $\infty$-algebra $(\ell_k)_{k\geq 1}$ on the sheaf of sections of $E$ and a vector bundle morphism, $\rho\colon E_{-1}\rightarrow TM$, called \emph{anchor map} such that the $k$-ary brackets $\ell_k,\,k\neq 2$ are $\mathcal O$-multilinear and such that
\begin{equation}
  \ell_2(e_1, f e_2) =\rho(e_1)[f]e_2 + f\ell_2(e_1, e_2) 
\end{equation}
for all $e_1\in \Gamma(E_{-1} ), e_2\in\Gamma(E_\bullet)$ and $f\in\mathcal O$. The sequence
\begin{equation}\label{linpart}
        \xymatrix{\cdots\ar[r]^{\ell_1}&E_{-2}\ar[r]^{\ell_1}&E_{-1}\ar[r]^{{\rho}}&{TM},}
    \end{equation}
is a complex called the \emph{linear part} of the Lie $\infty$-algebroid. \\

Notice that given a Lie $\infty$-algebroid $(E, (\ell_k)_{k\geq 1},\rho)$, the quadruple $(E,\dd=\ell_1,\ell_2, \rho)$  is a graded almost  Lie algebroid over $M$.

The following theorem is important, see Section 2 in \cite{LLS} or \cite{CLRL} for more details. \begin{theorem}
Let $\mathfrak{F}$ be a singular foliation over $M$.
Any geometric resolution  of ~$\mathfrak F$ 
\begin{equation}
    \label{eq:resolutions}
\cdots \stackrel{\dd} \longrightarrow E_{-3} \stackrel{\dd}{\longrightarrow} E_{-2} \stackrel{\dd}{\longrightarrow} E_{-1} \stackrel{\rho}{\longrightarrow} TM \end{equation}
  comes equipped with a Lie $\infty $-algebroid structure whose unary bracket is $\dd $ and whose anchor map is $\rho$ (in particular $\rho(\Gamma(E_{-1}))=\mathfrak{F}$). Such a Lie $\infty $-algebroid structure is unique up to homotopy and is called a \emph{universal Lie $\infty $-algebroid of} $\mathfrak F$.
  \end{theorem}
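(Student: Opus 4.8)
The plan is to build the brackets $\ell_k$ one at a time, by induction on the arity $k$, with the exactness of the geometric resolution supplying, at each stage, the \emph{primitive} needed to cancel an obstruction; this is the standard obstruction-theoretic (homological perturbation) argument, and I would carry it out essentially as in \cite{LLS,CLRL}.

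First I would produce the binary bracket $\ell_2$, upgrading $(E,\dd,\rho)$ to a graded almost Lie algebroid $(E,\dd,\ell_2,\rho)$. Working in a local frame $(e_\alpha)$ of $E_{-1}$: since $\mathfrak F$ is involutive and $\rho(\Gamma(E_{-1}))=\mathfrak F$, we have $[\rho(e_\alpha),\rho(e_\beta)]\in\mathfrak F$, so there is a section $\ell_2(e_\alpha,e_\beta)\in\Gamma(E_{-1})$ with $\rho(\ell_2(e_\alpha,e_\beta))=[\rho(e_\alpha),\rho(e_\beta)]$; extend to $\Gamma(E_{-1})\odot\Gamma(E_{-1})$ by forcing the Leibniz identity, which is consistent precisely because $\rho$ is an anchor onto $\mathfrak F$. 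To extend $\ell_2$ to arguments in $E_{-i}$ with $i\ge 2$, the derivation identity relating $\ell_1$ and $\ell_2$ forces $\ell_1\ell_2(x,y)=-\ell_2(\ell_1 x,y)-(-1)^{i}\ell_2(x,\ell_1 y)$; the right-hand side is already defined, one checks it is $\ell_1$-closed, and by exactness of the resolution in the relevant negative degree it is $\ell_1$-exact, so any primitive can be taken as $\ell_2(x,y)$. One then verifies that $\ell_2$ is graded symmetric and $\mathcal O$-(bi)linear in the sense required by the axioms.

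For the higher brackets, suppose $\ell_1,\dots,\ell_n$ have been constructed so that all Stasheff ($L_\infty$) identities involving only $\ell_1,\dots,\ell_n$ hold. The failure of the next batch of identities is measured by a Jacobiator $J_{n+1}$, an explicit graded-symmetric expression in $\ell_1,\dots,\ell_n$ valued in $\Gamma(E)$; using the already-established identities (and, for the terms containing $\ell_2$, the Leibniz rule and the fact that $\rho$ is a bracket morphism) one checks that $J_{n+1}$ is graded symmetric, $\mathcal O$-multilinear (as $n+1\neq 2$), and $\ell_1$-closed. Thus $J_{n+1}$ is a section of a bundle of the form $\mathrm{Hom}\!\big(\bigodot^{n+1}E,E\big)$ killed by $\ell_1$ in the target; tensoring the resolution of $\mathfrak F$ by the locally free sheaf $\big(\bigodot^{n+1}E\big)^{*}$ preserves exactness, so $J_{n+1}$ is $\ell_1$-exact, and a primitive defines $\ell_{n+1}$ making the $(n+1)$st identities hold. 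Iterating over all $k$ yields the Lie $\infty$-algebroid, whose linear part is by construction $(E,\dd,\rho)$. For uniqueness up to homotopy: any two geometric resolutions of $\mathfrak F$ are homotopy equivalent as anchored complexes of vector bundles (a diagram chase from exactness), so one reduces to the case of the same underlying complex, and then builds an $L_\infty$-morphism $\Phi=(\Phi_1=\mathrm{id},\Phi_2,\dots)$ between the two structures by the same inductive obstruction argument — the obstruction at each stage is again an $\ell_1$-closed $\mathcal O$-multilinear graded-symmetric expression living in an exact (after tensoring by a locally free sheaf) complex, hence exact. A parallel argument comparing $\Phi$ with the identity produces the required homotopy.

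The main obstacle — the part needing genuine care rather than formula-pushing — is proving, at each inductive step, that the obstruction $J_{n+1}$ (and its analogue for morphisms) really is an $\ell_1$-cocycle: this closedness is not formal, and it is exactly where the lower-arity identities together with the Leibniz rule and $\rho$ being a morphism enter, through a somewhat delicate graded-combinatorial cancellation. Secondary points that must be handled are: checking the $\mathcal O$-multilinearity of the obstruction (so that it is genuinely a section of a vector bundle and the "tensor the resolution by a locally free sheaf" step is legitimate); the sign/degree bookkeeping in the Stasheff identities; and, for a global statement, patching the locally chosen primitives, which is possible because the choices form a torsor under an exact complex (using partitions of unity in the smooth setting, and arguing locally in the holomorphic/algebraic settings, consistently with the scope declared in the paper).
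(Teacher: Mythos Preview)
The paper does not prove this theorem: it is quoted as a background result with the sentence ``The following theorem is important, see Section 2 in \cite{LLS} or \cite{CLRL} for more details'', and no argument is given. Your outline is the standard obstruction-theoretic construction carried out in those references (build $\ell_2$ from involutivity of $\mathfrak F$ and surjectivity of $\rho$, then inductively kill Jacobiators using exactness of the resolution tensored with locally free sheaves; same scheme for uniqueness up to homotopy), so there is nothing to compare against within the paper itself. Your identification of the delicate point --- that the obstruction $J_{n+1}$ is genuinely $\ell_1$-closed and $\mathcal O$-multilinear --- matches what is emphasized in \cite{LLS,CLRL}.
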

  In particular, this Lie $\infty$-algebroid structure can be truncated to a graded almost  Lie algebroid for $\mathfrak{F}$.
  \item \label{item:infty-isotropy}  Let $\left(E_\bullet,\ell_\bullet, \rho\right)$  a universal Lie $\infty$-algebroid of a singular foliation $\mathfrak{F}$. For every point $x\in M$, 
  
  \begin{enumerate}
  \item We let $H^\bullet(\mathfrak F,x)=\oplus_{i\geq 1}H^{-i}(\mathfrak{F},x)$ be the cohomology of the complex \eqref{eq:resolutions}. The cohomology groups $H^\bullet(\mathfrak F,x)$ do not depend on the choice of a geometric resolution of $\mathfrak{F}$. Notice that when the complex \eqref{eq:resolutions} is minimal at $x$, $H^{-i}(\mathfrak{F},x)\simeq E_{-i}|_x$ for every $i\geq 1$.
      \item The $1$-ary and the $2$-ary brackets restrict to the graded vector space $$\left(\bigoplus_{i\geq 2} {E_{-i}}_{|_x}\right)\oplus \ker (\rho_{x})$$
and equip the latter with a graded almost  Lie $\infty$-algebra structure as follows : for every $k\in \{1,2\}$,
\begin{equation*}
    \{x_1,\ldots,x_k\}_k :=\ell_k(s_1,\ldots,s_k)_{|_x}
\end{equation*}
for all $x_1,\ldots,x_k\in ev(E,x) $ and $s_1,\ldots,s_k\in \Gamma(E)$ sections of $E$ such that ${s_i}(x)=x_i$ with $i=1,\ldots,k$.
  \end{enumerate}
The bracket $\{\cdot\,,\cdot\,\}_2$ induces a graded Lie algebra on $H^\bullet(\mathfrak F,x)$. In particular, the $2$-ary bracket $\{\cdot\,,\cdot\}_2$ satisfies the Jacobi identity on $H^{-1}(\mathfrak F,x)=\frac{\ker(\rho_x)}{\mathrm{im}(\dd^{(2)}_{x})}$, and equips the latter with a Lie algebra structure.

  \item  Let $(M,\mathfrak{F})$ be a singular foliation, let $\mathcal I_x := \left\lbrace f \in
C^\infty (M )\mid  f (x) = 0\right\rbrace$ and $\mathfrak F(x):= \left\lbrace X \in\mathfrak F \mid  X(x) =0\right\rbrace$. The quotient   $\mathfrak{g}_x=\frac{\mathfrak{F}(x)}{\mathcal{I}_x\mathfrak{F}}$ is a Lie algebra and is called the isotropy Lie algebra of $\mathfrak{F}$ at $x$. A point $x\in M$ is said to be a \emph{regular point of $\mathfrak F$} if $\mathfrak g_x=\{0\}$, otherwise we say that $x$ is a \emph{singular point}. The set of regular points of $\mathfrak F$ is denoted by $M_{\mathrm{reg},\mathfrak F}$.\begin{lemma}\cite{LLS}\label{lemma:M_reg}
Let $(E,\ell_\bullet, \rho)$ be a universal  Lie $\infty$-algebroid of $\mathfrak{F}$. Consider its underlying geometric resolution 
\begin{equation*}
(E, \dd, \rho) :\quad\cdots \stackrel{\ell_1=\dd^{(4)}} \longrightarrow E_{-3} \stackrel{\ell_1=\dd^{(3)}}{\longrightarrow} E_{-2} \stackrel{\ell_1=\dd^{(2)}}{\longrightarrow} E_{-1} \stackrel{\rho=\dd^{(1)}}{\longrightarrow} TM.\end{equation*} Then, 
\begin{enumerate}
 \item for all $x\in M$, we have $   H^{-1}(\mathfrak F,x)\simeq \mathfrak{g}_x$ as Lie algebras;
    \item the subset of regular points of $\mathfrak F$ in $M$ satisfies \begin{align*}
    M_{\mathrm{reg}, \mathfrak{F}}&=
    \{x\in M \mid \mathrm{rk}(\dd^{(2)}_x)=\dim(\ker\rho_x)\}\\&=
    \{x\in M \mid H^{-i}(\mathfrak{F}, x)=0, \forall i\geq 1\},
    \end{align*}
    $M_{\mathrm{reg}, \mathfrak  F}$ is open and dense in $M$;

\item the restriction of the foliation $\mathfrak{F}$ to $M_{\mathrm{reg}, \mathfrak  F}$ is
the set of sections of a subbundle of TM, i.e., is a regular foliation;
\item \label{iterated-ranks} For every $i\geq 0$, the dimension of $\mathrm{im}\left(\dd^{(i+1)}\right)$ is locally constant on $M_{\mathrm{reg}, \mathfrak  F}$. Moreover, if $r$ the dimension of a regular leaf, then $\mathrm{im}(\dd^{(i+1)})$ is of codimension  $$r_i={\sum_{j=1}^{i-1}}(-1)^{j+1}\mathrm{rk}(E_{-j})+(-1)^{i+1}r, \;\text{for}\; i\geq 1$$ in $E_{-i}$ or $r_0=\dim M-r$, with  $E_0:=TM$;
\item if $(E,\dd,\rho)$ is of finite length, then all the regular leaves have the same dimension.
\end{enumerate}
\end{lemma}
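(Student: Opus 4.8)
The idea is to handle the five assertions in turn, leaning on the geometric resolution $(E,\dd,\rho)$ and, crucially, its exactness, with everything else reduced to linear algebra of vector bundles and semicontinuity of rank. For (1), I would write down the comparison map $\Phi\colon H^{-1}(\mathfrak F,x)\to\mathfrak g_x$ by hand: given $v\in\ker\rho_x\subseteq E_{-1}|_x$, choose $s\in\Gamma(E_{-1})$ with $s(x)=v$ and set $\Phi([v]):=[\rho(s)]\in\mathfrak g_x$, which makes sense because $\rho(s)$ vanishes at $x$. Exactness gives well-definedness on both sides: changing $s$ inside $\mathcal I_x\Gamma(E_{-1})$ moves $\rho(s)$ inside $\mathcal I_x\mathfrak F$, and changing $v$ by an element of $\mathrm{im}\,\dd^{(2)}_x$ amounts to adding some $\dd^{(2)}(\tilde w)$, which $\rho$ annihilates. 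Surjectivity is the tautology that $X\in\mathfrak F(x)$ lifts through $\rho$; injectivity is again exactness --- if $\rho(s)=\sum_j f_j\rho(u_j)$ with $f_j(x)=0$ then $s-\sum_j f_ju_j\in\ker\rho=\mathrm{im}\,\dd^{(2)}$, hence $v\in\mathrm{im}\,\dd^{(2)}_x$. That $\Phi$ respects the brackets follows from the anchor being a bracket morphism, $\rho(\ell_2(e,e'))=[\rho(e),\rho(e')]$, together with the fibrewise bracket $\{\cdot,\cdot\}_2$ on $\ker\rho_x$ recalled above, the Leibniz correction terms evaluating to $0$ at $x$ since the relevant anchors vanish there.

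For (2) and (3): the first equality is just the definition of $M_{\mathrm{reg},\mathfrak F}$ read through (1) --- $\mathfrak g_x=0\iff H^{-1}(\mathfrak F,x)=0$ --- and because $\mathrm{im}\,\dd^{(2)}_x\subseteq\ker\rho_x$ always, this is exactly $\mathrm{rk}(\dd^{(2)}_x)=\dim\ker\rho_x$. The substantive point is that $H^{-1}(\mathfrak F,x)=0$ already forces the higher cohomologies to vanish near $x$, and I would prove it by showing $\mathfrak F$ is locally free near such an $x$. Stalkwise, exactness identifies $\mathfrak F_x$ with $\Gamma(E_{-1})_x/\dd^{(2)}(\Gamma(E_{-2})_x)$, so quotienting by $\mathcal I_x$ gives a natural exact sequence $0\to H^{-1}(\mathfrak F,x)\to\mathfrak F_x/\mathcal I_x\mathfrak F_x\to T_xM$ with image $\mathrm{ev}_x\mathfrak F$; hence $H^{-1}(\mathfrak F,x)=0$ says that the minimal number of generators of $\mathfrak F_x$ equals $d:=\dim\mathrm{ev}_x\mathfrak F$. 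Picking generators $X_1,\dots,X_d$ of $\mathfrak F_x$, their values at $x$ are then linearly independent, and by local finite generation they generate $\mathfrak F$ on a neighbourhood $V$, so $\mathfrak F|_V=\Gamma(F)$ for the rank-$d$ subbundle $F=\langle X_1,\dots,X_d\rangle\subseteq TM|_V$. On such a $V$ the surjection $\rho\colon\Gamma(E_{-1})\to\Gamma(F)$ is a bundle epimorphism, so $\ker\rho=\mathrm{im}\,\dd^{(2)}$ is a subbundle; iterating, the resolution over $V$ breaks into short exact sequences of vector bundles
$$0\longrightarrow A_i\longrightarrow E_{-i}\longrightarrow A_{i-1}\longrightarrow 0,\qquad A_0=F,\quad A_i=\ker\dd^{(i)}=\mathrm{im}\,\dd^{(i+1)},$$
which remain exact on every fibre; therefore $H^{-i}(\mathfrak F,y)=0$ for all $y\in V$ and $i\geq 1$, and $\mathfrak F|_V$ is the sections of a subbundle. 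This at once gives openness of $\{x:H^{-1}(\mathfrak F,x)=0\}$, its coincidence with $\{x:H^{-i}(\mathfrak F,x)=0,\ \forall i\}$, and assertion (3). For density, $x\mapsto\mathrm{rk}(\rho_x)$ is integer-valued and lower semicontinuous, hence attains its maximum on a nonempty open set, on which $\mathfrak F$ is again locally $\Gamma(F)$ by the same computation; applying this inside an arbitrary open set shows $M_{\mathrm{reg},\mathfrak F}$ is dense.

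For (4) and (5): on $M_{\mathrm{reg},\mathfrak F}$ the short exact sequences above exhibit $\mathrm{im}\,\dd^{(i+1)}=A_i$ as a subbundle of $E_{-i}$, so its dimension is locally constant, and telescoping $\mathrm{rk}(A_i)=\mathrm{rk}(E_{-i})-\mathrm{rk}(A_{i-1})$ downward from $\mathrm{rk}(A_0)=r$ gives the codimension $\mathrm{rk}(E_{-i})-\mathrm{rk}(A_i)=\mathrm{rk}(A_{i-1})=r_i$, with $r_0=\dim M-r$ when $i=0$ and $E_0=TM$ --- this is (4). For (5), a finite-length resolution restricts over a neighbourhood in $M_{\mathrm{reg},\mathfrak F}$ to a bounded exact complex of vector bundles $0\to E_{-n}\to\cdots\to E_{-1}\to F\to 0$, so the alternating sum of its ranks vanishes and $\mathrm{rk}(F)=\sum_{i=1}^{n}(-1)^{i-1}\mathrm{rk}(E_{-i})$; the right-hand side is a global constant, hence the rank of $F$ --- the dimension of the regular leaf through the point --- is the same everywhere. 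Finally, since $\mathfrak F$ is self-preserving, its flows carry regular points to regular points, so a leaf through a regular point stays inside $M_{\mathrm{reg},\mathfrak F}$ and all such leaves share this dimension.

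\textbf{Expected main obstacle.} The heart of the argument is the implication used in (2): $H^{-1}(\mathfrak F,x)=0\Rightarrow\mathfrak F$ is locally free near $x$. This is the only place where one must leave fibrewise linear algebra and invoke Nakayama together with local finite generation to promote a statement about a single fibre into one about a neighbourhood; once it is available, (2)--(5) are bookkeeping with the resulting short exact sequences of bundles and with semicontinuity of rank. A secondary subtlety is the density statement, which must be run as ``the locus of maximal anchor rank is open, nonempty, and regular'' and then localized, rather than through any global notion of rank of $\mathfrak F$.
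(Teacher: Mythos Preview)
The paper does not give its own proof of this lemma: it is stated as a citation from \cite{LLS} and used as background. So there is no in-paper argument to compare against. That said, your plan is essentially the standard proof one finds in \cite{LLS}, and it is correct.

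A few minor remarks. In (2), your key step---promoting $H^{-1}(\mathfrak F,x)=0$ to local freeness of $\mathfrak F$ via Nakayama and then splitting the resolution into short exact sequences of vector bundles---is exactly the right mechanism, and your identification of it as the only nontrivial point is accurate. In (4), your telescoping yields $r_i=\mathrm{rk}(A_{i-1})=\sum_{j=1}^{i-1}(-1)^{i-1-j}\mathrm{rk}(E_{-j})+(-1)^{i-1}r$; this agrees with the paper's displayed formula for $i=1,2$ but, as you may notice, the paper's sign pattern $(-1)^{j+1}$ disagrees with the telescoped answer for $i\geq 3$ (e.g.\ test it on a length-$3$ resolution where $\mathrm{im}\,\dd^{(4)}=0$ forces $r_3=\mathrm{rk}(E_{-3})$). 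Your computation is the correct one; the discrepancy is a typo in the statement, not an error in your argument. In (5), the Euler-characteristic argument is clean and suffices; the final sentence about self-preservation is not strictly needed, since the constancy of $\mathrm{rk}(F)$ across all of $M_{\mathrm{reg},\mathfrak F}$ already follows from the global constancy of the $\mathrm{rk}(E_{-i})$.
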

\end{enumerate}
In the sequel, we assume that a geometric resolution of finite length exists. Under these assumptions, all the regular leaves have the same dimension. We denote by $r$ the common dimension of the regular leaves.
\section{Blowup procedures}\label{sec:2}

\subsection{Blowup of vector bundle morphisms.}

Firstly, let us explain a general construction on morphisms of vector bundles that we refer as Nash blowup. For an open subset $\mathcal V \subseteq M$, we shall denote by $\mathcal{V}^\mathbb N$ the $\mathcal V$-valued sequences of points $(x_n)$ indexed by $\mathbb{N}$. We direct the reader to Appendix \ref{appendix} for conventions and notations regarding Grassmannians.

\subsubsection{The Nash blowup of a vector bundle morphism}

Let $E, F$ be vector bundles over $M$ and $$\xymatrix{F\ar[rr]^\dd\ar[rd] &&E\ar[ld]\\&M&&}$$
a morphism of vector bundles over the identity. In the smooth case, we  assume that $\dd$ is of constant rank on an open dense subset $M_{\mathrm{reg},\dd}\subset M$, i.e., the dimensions of  $\mathrm{im}(\dd_x)$ or $\ker (\dd_x)$ are constant for $x\in M_{\mathrm{reg},\dd}$, called the \emph{regular} part. Let $q$ be the co-dimension of $\mathrm{im}(\dd_x)\subseteq E_x$ for a point  $x\in M_{\mathrm{reg},\dd}$. Notice that for every $x\in M_{\mathrm{reg},\dd}$, $\mathrm{im} (\dd_x)$ is a point of the Grassmannian  $\mathrm{Gr}_{-q}(E_x)$  of  vector subspaces of $E_x$ of co-dimension $q$. Also,  $\ker(\dd_x)$ is a point of the Grassmanian $\mathrm{Gr}_{-(\mathrm{rk}(F)-q)}(F_x)$ of  vector subspaces of $F_x$ of co-dimension $\mathrm{rk}(F)-q$. We consider the natural section of the Grassmann bundle $\Pi\colon \mathrm{Gr}_{-q}(E)\longrightarrow M$ which is defined on $M_{\mathrm{reg},\dd}$ by: \begin{equation}\label{eq:Nash-vb-map}
    \sigma\colon M_{\mathrm{reg},\dd}\longrightarrow \mathrm{Gr}_{-q}(E),\, x\longmapsto \mathrm{im}(\dd_x).
\end{equation}
Then we define the \emph{Nash blowup space of $M$ along $\dd$} to be the closure $\widetilde{M}:= \overline{\sigma(M_{\mathrm{reg},\dd})}$ of the image of the section $\sigma$ in $\mathrm{Gr}_{-q}(E)$. It comes together with the projection $\pi\colon \widetilde{M}\longrightarrow M$, where $\pi$ denotes  the restriction of $\Pi\colon \mathrm{Gr}_{-q}(E)\longrightarrow M$ to  $\widetilde{M}$.
\begin{remark}
A detailed line-by-line comparison provides the construction of Nash as presented in \cite{Ali-Sinan, JPTS}, for a coherent sheaf of modules $\mathcal{K}$ over $\mathcal{O}$, i.e.,  a sheaf of $\mathcal{O}$-modules $\mathcal{K}$ such that for every
$m\in M$ there is an open neighborhood $\mathcal{U}$ of $m$ and an exact
sequence $$\mathcal{O}^n_\mathcal{U}\to \mathcal{O}^k_\mathcal{U}\to\mathcal{ K}_\mathcal{U}\to 0$$
for some integers $n$ and $k$. In Section \ref{Nash-foliations}, we apply the construction \eqref{eq:Nash-vb-map} to the case where $\ker \dd\subseteq \Gamma(F)$ is locally finitely generated, that is to say when the sheafffication of the image $\mathrm{im}(\dd)\subseteq \Gamma(E)$ is a coherent sheaf.
\end{remark}

\begin{remark}
 Intuitively, for $x\in M$, $\pi^{-1}(x)= \widetilde M\cap \Pi^{-1}(x)$ is the set of  all possible limits $\mathrm{Gr}_{-q}(E)$ of the images $\mathrm{im}(\dd_y)$ when $y\in M_{\mathrm{reg},\dd}$ converges to $x$.
\end{remark}
\begin{remark}
One can make  a similar construction with  the kernel of $\dd$.
\end{remark}
Here is an immediate property of that construction.
\begin{proposition}\label{prop:bi}Let $F\stackrel{\dd}{\longrightarrow}E$ be a vector bundle morphism over $M$. The projection $\pi\colon \widetilde M\rightarrow M$ has the following property:
 \begin{enumerate}
\item 
$\pi$ is proper and surjective. In particular, for each point $x\in M$, the fiber $\pi^{-1}(x)$ is non-empty.
\item {For every $x\in M$ and $V\in \pi^{-1}(x)$, one has $\mathrm{im}(\dd_x)\subseteq V$.}
\item  For every $x\in M_{\mathrm{reg},\dd}$,   $\pi^{-1}(x)=\mathrm{im}(\dd_x)$ is reduced to a point in $\mathrm{Gr}_{-q}(E)$. Also, $\pi^{-1}(M_{\mathrm{reg},\dd})$
is a manifold \footnote{Manifold is to be understood as quasi-projective when $M$ is quasi-projective.} and the restriction $\pi\colon \pi^{-1}(M_{\mathrm{reg},\dd})\longrightarrow M_{\mathrm{reg},\dd}$ is invertible\footnote{Invertible here means: diffeomorphism, in the smooth case, bi-holomorphism, in the complex case
.} in the smooth and holomorphic contexts.
\end{enumerate}
\end{proposition}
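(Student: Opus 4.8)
The plan is to establish the three items in order, using only the definition of $\widetilde M = \overline{\sigma(M_{\mathrm{reg},\dd})}$ inside the Grassmann bundle $\mathrm{Gr}_{-q}(E)$ and elementary properties of Grassmann bundles recalled in the appendix. First, for \emph{properness and surjectivity}: the Grassmann bundle $\Pi\colon \mathrm{Gr}_{-q}(E)\to M$ is a proper map, being a locally trivial fibre bundle with compact fibre $\mathrm{Gr}_{-q}(\mathbb K^{\mathrm{rk}(E)})$. Since $\widetilde M$ is by construction a \emph{closed} subset of $\mathrm{Gr}_{-q}(E)$ (a closure), the restriction $\pi = \Pi|_{\widetilde M}$ is again proper: preimages under $\pi$ of a compact $K\subseteq M$ are $\widetilde M \cap \Pi^{-1}(K)$, a closed subset of the compact set $\Pi^{-1}(K)$, hence compact. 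For surjectivity, observe that $\pi\bigl(\widetilde M\bigr) = \pi\bigl(\overline{\sigma(M_{\mathrm{reg},\dd})}\bigr)$ is a closed subset of $M$ (image of a closed set under a proper map is closed) which contains $\pi(\sigma(M_{\mathrm{reg},\dd})) = M_{\mathrm{reg},\dd}$; since $M_{\mathrm{reg},\dd}$ is dense in $M$ and $\pi(\widetilde M)$ is closed, we get $\pi(\widetilde M) = M$. Consequently every fibre $\pi^{-1}(x)$ is non-empty.

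For item (2), the statement $\mathrm{im}(\dd_x) \subseteq V$ for every $V \in \pi^{-1}(x)$ is a closedness/semicontinuity argument. Fix $x\in M$ and $V \in \pi^{-1}(x)$. By definition of the closure there is a sequence $(x_n)$ in $M_{\mathrm{reg},\dd}$ with $x_n \to x$ and $\sigma(x_n) = \mathrm{im}(\dd_{x_n}) \to V$ in $\mathrm{Gr}_{-q}(E)$. Working in a local trivialisation of $E$ near $x$, $\dd$ becomes a smooth family of matrices $\dd_y$, and $\mathrm{rk}(\dd_y)$ is lower semicontinuous in $y$; on $M_{\mathrm{reg},\dd}$ it equals the generic rank $\mathrm{rk}(E)-q$, so in the limit $\mathrm{rk}(\dd_x)\le \mathrm{rk}(E)-q$. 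Now take any $v \in \mathrm{im}(\dd_x)$, say $v = \dd_x(w)$ for some $w \in F_x$; extending $w$ to a local section of $F$, the vectors $\dd_{x_n}(w) \in \mathrm{im}(\dd_{x_n})$ converge to $\dd_x(w) = v$. Since convergence $\sigma(x_n)\to V$ in the Grassmannian means (in a chart, or via the tautological bundle) that limits of sequences of vectors lying in $\mathrm{im}(\dd_{x_n})$ lie in $V$, we conclude $v \in V$, i.e. $\mathrm{im}(\dd_x) \subseteq V$.

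For item (3), on the regular part the section $\sigma$ from \eqref{eq:Nash-vb-map} is a genuine smooth (resp.\ holomorphic) section of $\Pi$ over the manifold $M_{\mathrm{reg},\dd}$: in a local trivialisation, $y\mapsto \mathrm{im}(\dd_y)$ depends smoothly on $y$ because the rank is locally constant there, so one can locally choose $\mathrm{rk}(E)-q$ columns of $\dd_y$ that stay linearly independent and span $\mathrm{im}(\dd_y)$, and their span varies smoothly (resp.\ holomorphically) in the Grassmann chart. Hence $\sigma(M_{\mathrm{reg},\dd})$ is already locally closed, and $\pi^{-1}(M_{\mathrm{reg},\dd}) = \widetilde M \cap \Pi^{-1}(M_{\mathrm{reg},\dd})$ equals exactly $\sigma(M_{\mathrm{reg},\dd})$: any point of $\widetilde M$ over $x\in M_{\mathrm{reg},\dd}$ is a limit of points $\sigma(x_n)$ and, restricting to $x_n$ in the open set $M_{\mathrm{reg},\dd}$ where $\sigma$ is continuous, must equal $\sigma(x)$. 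Therefore $\pi^{-1}(M_{\mathrm{reg},\dd})$ is the image of a smooth embedding of $M_{\mathrm{reg},\dd}$ (a section composed with the inclusion), hence a manifold (quasi-projective when $M$ is), and $\pi$ restricted to it is the inverse of $\sigma$, thus a diffeomorphism (resp.\ biholomorphism). In particular $\pi^{-1}(x) = \{\mathrm{im}(\dd_x)\}$ is a single point for $x\in M_{\mathrm{reg},\dd}$.

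The main obstacle is the limiting argument in item (2) / the matching argument in item (3): one must be careful about what "convergence in the Grassmann bundle" means and that the appendix's description of $\mathrm{Gr}_{-q}(E)$ (its charts, its tautological sub/quotient bundles) is used consistently — in particular that a convergent sequence of subspaces $W_n \to W$ has the property that every limit of a convergent sequence $v_n\in W_n$ lies in $W$, and conversely that $W\subseteq \liminf W_n$ when dimensions match. All of this is standard for Grassmannians, but it is the only place where the geometry of $\mathrm{Gr}_{-q}(E)$, rather than just set-theoretic closure, genuinely enters; everything else is formal properness/density bookkeeping.
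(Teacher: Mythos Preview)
Your proof is correct and follows essentially the same approach as the paper: properness from compact fibres of the Grassmann bundle, the limit argument with a local section of $F$ for item~(2), and identifying $\pi^{-1}(M_{\mathrm{reg},\dd})$ with the graph of $\sigma$ for item~(3). The only cosmetic differences are that the paper obtains surjectivity by a direct subsequence extraction rather than your ``proper image of closed is closed'' argument, and for item~(3) it uses the already-established inclusion $\mathrm{im}(\dd_x)\subseteq V$ together with equality of dimensions rather than continuity of $\sigma$; both routes are equally valid.
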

\begin{proof} Properness derives from the fact that the projection $\Pi$ admits compact fibers. For any $x\in M$, choose $\mathcal U\subset M$ an open neighborhood of $x$ that trivializes $E\longrightarrow M$ over $\mathcal{U}$. Then, $\mathrm{Gr}_{-q}(E)\simeq \mathcal{U}\times\mathrm{Gr}_{-r}(\mathbb{K}^{\mathrm{rk}(E)})$. Notice that,     \begin{equation*}
        \pi^{-1}(x)=\left\{V\subset E_{_x}\;\middle|\; \exists\, (x_n)\in M_{\mathrm{reg},\dd}^{\mathbb{N}},\, \text{such that},\,\;\mathrm{im}(\dd_{x_n})\underset{n \to +\infty}{\longrightarrow} V\; as \; x_n\underset{n \to +\infty}{\longrightarrow}x\right\}.
    \end{equation*}For any sequence $(x_n)$ in $\left(M_{\mathrm{reg},\dd}\cap\mathcal{U}\right)^\mathbb{N}$ that converges to $x$, we can extract a sequence $(x_{\varphi(n)})$ such that $n\mapsto \mathrm{im}(\dd_{x_{\varphi(n)}})\in \mathrm{Gr}_{-r}(\mathbb{K}^{\mathrm{rk}(E)})$ has a limit $V$, since the Grassmannian manifold $\mathrm{Gr}_{-r}(\mathbb{K}^{\mathrm{rk}(E)})$ is compact. Hence, $\pi^{-1}(x)\neq  	\emptyset$ and $\pi$ is onto. This proves item 1.

Let us show item 2. {Let $V\in \pi^{-1}(x)$ and $(x_n)\in (M_{\mathrm{reg},\dd})^{\mathbb{N}}$ such that $x_n\underset{n \to +\infty}{\longrightarrow}x$\;  and  \; $\mathrm{im}(\dd_{x_n})\underset{n \to +\infty}{\longrightarrow} V$. Let $v\in\mathrm{im}(\dd_{x})$. We have $v=\dd_{x}u$ for some $u\in {F_x}$. Choose a (local) section $\widetilde{u}$ of $F$ through $u$. By continuity,  $\dd_{x_n}\widetilde{u}(x_n)\underset{n \to +\infty}{\longrightarrow}\dd_{x}u$, hence $\dd_{x}u\in V$. Thus, $\mathrm{im}(\dd_{x})\subseteq V$.}

In particular, if $x\in M_{\mathrm{reg}, \dd}$ and $V\in \pi^{-1}(x)$ one has $\mathrm{im}(\dd_{x})= V$ since $\dim V= \dim(\mathrm{im}(\dd_{x}))$. Therefore, $\pi^{-1}(M_{\mathrm{reg},\dd})$  is the image of the map $\sigma$ on $M_{\mathrm{reg},\dd}$, it is isomorphic/biholomorphic to $M_{\mathrm{reg},\dd}$. This proves item 3.
\end{proof}

\begin{remark}
\label{rmk:invariance}
Let $A,B,C,E,F$ be vector bundles over  $M$.
The Nash blow up space of a vector bundle morphism $ \dd : F \to E$ coincides with
the Nash blowup of the vector bundle morphism 
 $$ \begin{array}{rcl}  A \oplus B \oplus F  &\to & B \oplus E \oplus C\\  (a,b,f)  & \mapsto & (b,\dd (f),0) \end{array} .$$
The result is left to the reader.
\end{remark}

In Section \ref{Nash-foliations}, we apply the constructions above to a sequence of vector bundle morphisms which are all of constant rank on an open dense subset.

\subsubsection{On the smoothness of the Nash blowup and monoidal transformations}\label{sec:smooth}

We warn the reader not to confuse two uses of the word “smooth”. An analytic subset of $ \mathbb C^N$ or $ \mathbb R^N$ is said to be smooth when it admits no singular point. The word “smooth” is also used to say that we work within the context of smooth differential geometry, using smooth manifolds on which functions are of class $ \mathcal C^\infty$. Notice that a smooth analytic variety is also a smooth manifold. The context should, however, prevent all confusions.

\vspace{0.5cm}

The “blow-up” Nash construction in the previous section is of a type that algebraic geometers. call monoidal transformations, also known as Hironaka blowups \cite{Hironaka-Rossi}.
Several authors \cite{Nobile, Ali-Sinan} or \cite{gonzalez2009nash} have used this point of view to  study the smoothness of the blowup of a singular foliation, and to compute explicitly the blowup space $\widetilde M$.


Let us recall what monoidal transformations are.
Denote by  $ \mathcal O$ the sheaf of holomorphic or real analytic functions on a holomorphic or real analytic manifold $M$. Let $ \mathcal I \subset \mathcal O$ be a locally finitely generated sub-sheaf of  $\mathcal O $. Denote by $ Z(\mathcal I) \subset M$  its zero locus (= the subset of all points where all functions in $ \mathcal I$ vanish). The subset $ Z(\mathcal I)$ is a closed subset for the usual topology, and  $M \backslash  Z(\mathcal I)$ is an open dense subset of $M$.

We call \emph{monoidal transformation of $M$ with respect to $ \mathcal I$} (in the sense of \cite{Hironaka-Rossi}) 
the pair $ (\mathrm{Bl}_\mathcal I (M),\pi)$ constructed as follows.
Let $ \mathcal U \subset M$ be an open subset such that $ \mathcal I_\mathcal U$ is generated by a finite family $ \varphi_1, \dots, \varphi_n$.
\begin{enumerate}
    \item Define a map $$H\colon \mathcal{U}\setminus Z(\mathcal I) \longrightarrow \mathbb P^{n-1}, \;x\to [\varphi_1(x)\colon\cdots \colon \varphi_n(x)].$$

    \item Then we  consider the ideal $\mathcal{J}$ of (projective) functions on $ \mathcal U\times \mathbb P^{n-1}$ generated by  $$\left(\left(x, [\xi_1\colon\cdots\colon \xi_n] \right)\mapsto \xi_i\varphi_j(x)-\xi_j\varphi_i(x)\right)_{i\neq j=1,\ldots,n}$$  where $[\xi_1\colon\cdots\colon \xi_n]$ are the homogeneous coordinates on $\mathbb P^{n-1}$.
\end{enumerate}
The closure $\mathrm{Bl}_\mathcal I(\mathcal{U})\subset  \mathcal U\times \mathbb P^{n-1}$ of the graph of $H$ in  $ \mathcal U\times \mathbb P^{n-1}$ is an irreducible component of the zero locus of the ideal $\mathcal{J}$, and is in particular an irreducible analytic subvariety.  
The natural projection $ \pi_\mathcal U: \mathrm{Bl}_\mathcal I(\mathcal{U}) \to \mathcal U$ is  a real analytic, or holomorphic proper map and restricts to an invertible map in the relevant category $ \pi^{-1}(  \mathcal{U}\setminus Z(\mathcal I)) \to \mathcal{U}\setminus Z(\mathcal I)$. 
Last, one can check that the pair $\left(\mathrm{Bl}_{\mathcal I}(\mathcal{U}),\pi_\mathcal U\right)$ does not depend on the choice of local generators $ \varphi_1, \dots, \varphi_n$, see e.g., Lemma 9.16 in \cite{Gathmann}. This explains the notation. 

\vspace{0.5cm}
Let us explain how the monoidal transformation depends on the ideal. Assume that we are given two ideals  $ \mathcal I$ and $ \mathcal J$ over $M$. It is natural to ask what are the conditions that guarantee the existence or the dotted arrow below:
 \begin{equation}\label{dia:dottedarrow}  \xymatrix{  \mathrm{Bl}_{\mathcal I}(M) \ar@{.>}[r]\ar[d]_\pi&\ar[d]^\pi \mathrm{Bl}_{\mathcal J}(M)\\ M \ar[r]^{=}& M } .  \end{equation}
 If it exists, then it has to be  unique. 
The theorem proved by Moody in \cite{Moody} gives a definitive answer,  by stating that the following two conditions are equivalent:
 \begin{enumerate}
 \item[(i)] the dotted arrow in \eqref{dia:dottedarrow} exists, and
 \item[(ii)] There exists an integer $n$ and a finitely generated sub-$\mathcal O$-module $\mathcal K $ in the sheaf of the fraction field $\hat{\mathcal O }$ of $ \mathcal O$ such that $   \mathcal K \cdot \mathcal J =\mathcal I^n $.
 \end{enumerate}
We will call this equivalence the \emph{Moody criteria}.
 
\begin{remark}
\label{rmk:glueing}
In particular, given an open cover $(\mathcal U_i)_{i \in I} $ of $M$ by coordinates open subsets and a family $ \mathcal I_i \subset \mathcal O(\mathcal U_i)$ of finitely generated ideals such that, for every $ i,j \in I$, there exists an element $\phi_{ij} \in \hat{\mathcal O}(\mathcal U_i\cap \mathcal U_j) $ such that $ \mathcal I_i|_{\mathcal U_i\cap\mathcal U_j}  = \phi_{ij} \mathcal I_j|_{\mathcal U_i\cap \mathcal U_j} $, then the locally defined monoidal transformations $({\mathrm{Bl}}_{\mathcal I_i}\mathcal U_i , \pi) $ coincide in $ \mathcal U_i \cap \mathcal U_j$ and define, globally, a pair $( \tilde{M},\pi) $. Moreover, any other family $ (\mathcal U_j',\mathcal I_j')$ such that $ \mathcal I_i|_{\mathcal U_i\cap \mathcal U_j'}  = \phi_{ij} \mathcal I_j'|_{\mathcal U_i\cap\mathcal U_j'} $ for some $ \phi_{ij} \in \hat {\mathcal O}(\mathcal U_i \cap \mathcal U_j')$ will define the same pair $ (\tilde{M},\pi)$.
\end{remark}

\vspace{0.5cm}
Let us apply these general facts to the situation of a vector bundle morphism $\dd\colon F\to E$ which can be either holomorphic or real analytic.

There is an open subset $ M_{\mathrm{reg},\dd}$ on which $\dd$ has constant rank. We denote by $ k$ this rank.  
The integer $k$ also admits the following characterisation. 
Any point $m$ admits a connected neighborhood $\mathcal U $ on which the $ \hat{\mathcal O}(\mathcal U)$-module $ \hat{\mathcal O}(\mathcal U) {\mathrm{im}}(\dd) $ generated by the image of $ \dd$ is a vector space whose dimension over $ \hat{\mathcal O}(\mathcal U)$ is $ k$.  
This characterisation has the following consequence: we say that a family $ e_\bullet = e_1, \dots, e_k$ of local sections of $E$ over such an open subset $ \mathcal U$ is \emph{admissible} if $\dd(e_1), \dots, \dd(e_k)$ are independent on an open subset of $\mathcal U $.  Equivalently, it means that  $\dd(e_1), \dots, \dd(e_k)$ is a basis of $ \hat{\mathcal O}(\mathcal U) {\mathrm{im}}(\dd) $. For any two admissible families
$ e_\bullet = e_1, \dots, e_k$  and $ e_\bullet' = e_1', \dots, e_k'$ therefore, there exists a matrix valued in $ \hat{\mathcal O}(\mathcal U)$ such that:
 \begin{equation} \label{eq:matrix}  \begin{pmatrix}
     e_ 1 \\ \vdots \\ e_k
 \end{pmatrix}  \, = \begin{pmatrix}
     m_{1,1} &\ldots &  m_{k,1}\\ \vdots &&\vdots\\ m_{1,k} &\ldots & m_{k,k}
 \end{pmatrix}  \, \begin{pmatrix}
     e_1' \\ \vdots \\ e_k'
 \end{pmatrix}  .\end{equation}
Now, given a coordinate open set $ \mathcal U \subset M$ and an admissible family $e_{\bullet} := e_1, \dots, e_k \in \Gamma(F)$, one can consider the ideal $ \mathcal I_{{\mathrm{im}}(\dd(e_\bullet))}$ generated by  $ \langle \alpha , \dd(e_1)\wedge \cdots \wedge \dd(e_k)\rangle $ for $ \alpha \in \Gamma(\wedge^k E^*) $. 
  Equivalently, $I_{{\mathrm{im}}(\dd(e_\bullet))}$ can also be seen as the ideal generated by all $ k \times k$ minors of the $ k \times {\mathrm{rk}}(F)$-matrix representing the vectors $\dd(e_1), \dots ,\dd(e_k)  $ on a given   trivialization of $F$ on $ \mathcal U$.
For any two admissible families $ e_\bullet$ and $ e_\bullet '$ defined on two such  coordinate open sets $ \mathcal U$  and $ \mathcal U'$, respectively, we have on $ \mathcal U \cap \mathcal U'$
 \begin{equation}\label{eq:changeadmissible}  I_{{\mathrm{im}}(\dd(e_\bullet))} = \phi_{e_\bullet / e_\bullet'}   I_{{\mathrm{im}}(\dd(e_\bullet'))}   \end{equation}
 where $\phi_{e_\bullet / e_\bullet'}  \in \hat{\mathcal O}(\mathcal U \cap \mathcal U')$ is the determinant of the $k \times k$ matrix as in Equation \eqref{eq:matrix}.
One can then cover  $M$ by open coordinate neighborhoods $(\mathcal U_i)_{i \in I} $, then choose an admissible family on each one of them. 
The construction in Remark \ref{rmk:glueing} applies and yields a pair $ (\tilde{M},\pi)$.

\begin{lemma}
The pair $ (\tilde{M},\pi)$ satisfies the following properties.
\begin{enumerate}
\item In a neighborhood $ \mathcal U$ of every point of $M$, it is given by the monoidal transformation with respect to the ideal generated by the $k \times k$ minors of $\dd(e_1), \dots, \dd(e_k)$ with $ e_1, \dots, e_k$ being \emph{any} admissible family. 
\item In particular, each point has a neighborhood near which it is an analytic variety, and $ \pi$ is a holomorphic or real analytic proper map.
\item Last, $ \pi^{-1}(M_{\mathrm{reg},\dd})$ admits a natural manifold structure to which the restriction of $ \pi$ is invertible.
\end{enumerate}
\end{lemma}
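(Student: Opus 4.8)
The strategy is to reduce everything to the theory of monoidal transformations recalled above, combined with the Moody criteria, so that the three assertions follow from facts already established. Throughout, fix a vector bundle morphism $\dd\colon F\to E$ of constant rank $k$ on the open dense set $M_{\mathrm{reg},\dd}$.

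\emph{Step 1: Local independence of the admissible family.} First I would show that for a fixed coordinate open set $\mathcal U$ and a fixed trivialization of $F$ and $E$ over $\mathcal U$, any two admissible families $e_\bullet$ and $e_\bullet'$ produce ideals related by $I_{\mathrm{im}(\dd(e_\bullet))} = \phi\, I_{\mathrm{im}(\dd(e_\bullet'))}$ with $\phi = \det(m_{i,j}) \in \hat{\mathcal O}(\mathcal U)^\times$, which is exactly Equation \eqref{eq:changeadmissible}; the computation is the multilinearity of $\dd(e_1)\wedge\cdots\wedge\dd(e_k)$ under the base change \eqref{eq:matrix}. By the Moody criteria (with $\mathcal K = \phi\,\mathcal O$ and $n=1$, using that $\phi$ is invertible in $\hat{\mathcal O}$ so $\mathcal K$ works in both directions), the monoidal transformations $\mathrm{Bl}_{I_{\mathrm{im}(\dd(e_\bullet))}}(\mathcal U)$ and $\mathrm{Bl}_{I_{\mathrm{im}(\dd(e_\bullet'))}}(\mathcal U)$ are canonically isomorphic over $\mathcal U$. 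This gives item 1, once the gluing is in place.

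\emph{Step 2: Gluing.} Next I would invoke Remark \ref{rmk:glueing}: cover $M$ by coordinate open sets $(\mathcal U_i)_{i\in I}$, choose an admissible family on each, and observe that on $\mathcal U_i\cap\mathcal U_j$ the two ideals differ by the invertible factor $\phi_{e_\bullet^{(i)}/e_\bullet^{(j)}}\in\hat{\mathcal O}(\mathcal U_i\cap\mathcal U_j)$ from \eqref{eq:changeadmissible}. Remark \ref{rmk:glueing} then produces a well-defined global pair $(\tilde M,\pi)$ independent of all the choices, and says the local pieces are genuine monoidal transformations; this is item 1, and item 2 is immediate since each $\mathrm{Bl}$ is, by construction in Section \ref{sec:smooth}, an irreducible analytic subvariety of $\mathcal U_i\times\mathbb P^{N-1}$ with $\pi$ the restriction of the (proper, holomorphic or real analytic) projection.

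\emph{Step 3: The regular part.} For item 3, restrict to $M_{\mathrm{reg},\dd}$, where $\dd$ has locally constant rank $k$. Here one can choose the admissible family $e_\bullet$ so that $\dd(e_1),\dots,\dd(e_k)$ is a frame for the subbundle $\mathrm{im}(\dd)\subseteq E$; then the $k\times k$ minors never all vanish, so $Z(I_{\mathrm{im}(\dd(e_\bullet))})\cap M_{\mathrm{reg},\dd}=\emptyset$, and the monoidal transformation restricts to an invertible map over $M_{\mathrm{reg},\dd}$ by the standard property of $\pi_{\mathcal U}$ over $\mathcal U\setminus Z(\mathcal I)$ recalled above. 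Equivalently, one identifies this with the Nash construction of Section \ref{sec:2}: the section $\sigma\colon x\mapsto \mathrm{im}(\dd_x)$ into $\mathrm{Gr}_{-q}(E)$ is a smooth (resp. holomorphic) embedding on $M_{\mathrm{reg},\dd}$, so $\pi^{-1}(M_{\mathrm{reg},\dd})\cong M_{\mathrm{reg},\dd}$ carries the manifold structure transported along $\sigma$, and this matches Proposition \ref{prop:bi}(3).

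\emph{Main obstacle.} The genuinely substantive point is verifying the hypothesis (ii) of the Moody criteria in the form needed for the two comparisons in Steps 1 and 2 — i.e. checking that a change of admissible family, or passage between overlapping charts, scales the minor ideal by a single element of the fraction-field sheaf $\hat{\mathcal O}$ (and an invertible one, so that the dotted arrow goes both ways). Everything else is either bookkeeping (gluing via Remark \ref{rmk:glueing}) or a direct appeal to the properties of monoidal transformations and of the Nash section already recorded in Proposition \ref{prop:bi}. I would therefore spend the bulk of the argument making the exterior-algebra computation behind \eqref{eq:changeadmissible} precise and noting its invertibility, and treat the rest as formal consequences.
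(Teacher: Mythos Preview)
Your proposal is correct and follows essentially the same approach as the paper. The paper's proof is extremely brief: it says items 1 and 2 hold ``by construction'' (meaning the construction immediately preceding the lemma, which already invokes Equation \eqref{eq:changeadmissible} and Remark \ref{rmk:glueing} exactly as you do in Steps 1--2), and for item 3 it gives precisely your Step 3 argument---at a regular point one can pick an admissible family with $\dd(e_1),\dots,\dd(e_k)$ independent at $m$, so $m\notin Z(I_{\mathrm{im}(\dd(e_\bullet))})$. Your write-up simply unpacks what the paper compresses into ``by construction.''
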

\begin{proof}
The first and second items hold by construction. The last item follows from the fact that for any $m \in M_{\mathrm{reg},\dd}$, there exists at least one admissible family $e_\bullet=(e_1, \dots,e_k)$ such that the vectors $\dd(e_1), \dots, \dd(e_k)$ are independent at the point $m$. In particular, $m $ does not belong to the zero locus of the ideal $I_{{\mathrm{im}}(\dd(e_\bullet))} $.
\end{proof}

Here is the main result of this discussion, whose proof is delayed to the end of the section.
It shares some similarity with Theorem 1 in \cite{zbMATH05707343}, that deals with Nash blow-up of affine varieties.

 \begin{proposition}
 \label{prop:thisisNash}
The pair  $ (\tilde{M},\pi)$ coincides with the Nash blowup. 
 \end{proposition}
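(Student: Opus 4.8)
The plan is to identify the two spaces fiberwise over a trivializing chart, using the fact that both are defined as closures of the same section over $M_{\mathrm{reg},\dd}$, and then to match the two incarnations of that section: one landing in a Grassmann bundle, the other in a product with projective space. The bridge between them is the Plücker embedding. Concretely, fix a point $m\in M$, a connected coordinate neighborhood $\mathcal U$ trivializing $E$ (say $E|_\mathcal U\simeq \mathcal U\times\mathbb K^N$ with $N=\mathrm{rk}(E)$), and an admissible family $e_\bullet=(e_1,\dots,e_k)$ of sections of $F$ over $\mathcal U$. For $x\in M_{\mathrm{reg},\dd}\cap\mathcal U$ the subspace $\mathrm{im}(\dd_x)\subseteq E_x$ is exactly the $k$-dimensional span of $\dd(e_1)(x),\dots,\dd(e_k)(x)$ (here I use $q=N-k$ for the codimension, so $\mathrm{Gr}_{-q}(E_x)=\mathrm{Gr}_k(E_x)$). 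Under the Plücker embedding $\mathrm{Gr}_k(\mathbb K^N)\hookrightarrow \mathbb P(\wedge^k\mathbb K^N)=\mathbb P^{\binom Nk-1}$, this subspace is sent to the point whose homogeneous coordinates are the $k\times k$ minors of the $k\times N$ matrix of $\dd(e_1)(x),\dots,\dd(e_k)(x)$ — which are precisely (up to sign and reordering) the generators $\varphi_1,\dots,\varphi_n$, $n=\binom Nk$, of the ideal $I_{\mathrm{im}(\dd(e_\bullet))}$ defining the monoidal transformation on $\mathcal U$. Thus on $M_{\mathrm{reg},\dd}\cap\mathcal U$ the section $\sigma$ of the Grassmann bundle and the map $H\colon x\mapsto[\varphi_1(x):\cdots:\varphi_n(x)]$ of the monoidal construction agree, once one composes $\sigma$ with the (fiberwise) Plücker embedding $\mathrm{Gr}_{-q}(E)|_\mathcal U\hookrightarrow \mathcal U\times\mathbb P^{n-1}$.

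Next I would pass to closures. The Plücker embedding is a closed embedding, hence a homeomorphism onto its image; so the closure of $\sigma(M_{\mathrm{reg},\dd}\cap\mathcal U)$ inside $\mathrm{Gr}_{-q}(E)|_\mathcal U$ is carried homeomorphically onto the closure of $H(M_{\mathrm{reg},\dd}\cap\mathcal U)$ inside $\mathcal U\times\mathbb P^{n-1}$, i.e.\ onto $\mathrm{Bl}_{I_{\mathrm{im}(\dd(e_\bullet))}}(\mathcal U)$. (One should note $M_{\mathrm{reg},\dd}\cap\mathcal U = \mathcal U\setminus Z(I_{\mathrm{im}(\dd(e_\bullet))})$, which is the domain on which $H$ is defined in the monoidal construction — this is exactly the last item of the preceding lemma.) Both constructions commute with the projection to $\mathcal U$, so the identification intertwines the two maps $\pi$. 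This gives the desired isomorphism locally, in the relevant category (real analytic or holomorphic).

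Finally I would check that these local identifications glue to a global one. On overlaps $\mathcal U_i\cap\mathcal U_j$, two things can change: the trivialization of $E$ and the admissible family. A change of trivialization of $E$ by a transition matrix $g_{ij}\in GL_N(\mathcal O)$ acts on $\wedge^k$ by $\wedge^k g_{ij}\in GL_n(\mathcal O)$, i.e.\ by a projective-linear automorphism of $\mathbb P^{n-1}$ covering the identity on $\mathcal U_i\cap\mathcal U_j$, under which the Plücker-embedded Grassmann bundle and the monoidal data are equivariant; a change of admissible family $e_\bullet\rightsquigarrow e_\bullet'$ rescales the ideal by the unit-up-to-fractions factor $\phi_{e_\bullet/e_\bullet'}\in\hat{\mathcal O}(\mathcal U_i\cap\mathcal U_j)$ of Equation \eqref{eq:changeadmissible} and simultaneously rescales the Plücker coordinates by the same factor, so the point of $\mathbb P^{n-1}$ is unchanged. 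Hence the hypotheses of Remark \ref{rmk:glueing} (and the invariance noted there, together with Remark \ref{rmk:invariance}) are met, and the glued pair is exactly $(\tilde M,\pi)$ on one side and $\widetilde M$ on the other.

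The step I expect to be the main obstacle is the bookkeeping in the last paragraph: making the two reparametrizations — change of bundle trivialization and change of admissible family — interact cleanly with the Plücker picture, and in particular verifying that the projective-linear action of $\wedge^k g_{ij}$ is compatible with the way $\mathrm{Gr}_{-q}(E)$ is assembled as a bundle (this is where the conventions of Appendix \ref{appendix} enter). Everything else is a direct unwinding of definitions: the Plücker description of $\mathrm{im}(\dd_x)$ in terms of minors, and the fact that a closed embedding identifies closures with closures.
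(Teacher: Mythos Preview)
Your approach is essentially the paper's: both use the Pl\"ucker embedding to identify the section $\sigma$ into the Grassmann bundle with the map $H$ into projective space whose homogeneous coordinates are the $k\times k$ minors generating $I_{\mathrm{im}(\dd(e_\bullet))}$, then pass to closures and glue. The paper inserts an auxiliary map $N$ (sending $x$ to the span of the $k$ chosen columns) between $\sigma$ and $H$, but this is cosmetic.

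One point to correct. You assert $M_{\mathrm{reg},\dd}\cap\mathcal U = \mathcal U\setminus Z(I_{\mathrm{im}(\dd(e_\bullet))})$ and, correspondingly, that $\mathrm{im}(\dd_x)=\mathrm{span}\{\dd(e_1)(x),\dots,\dd(e_k)(x)\}$ for every $x\in M_{\mathrm{reg},\dd}\cap\mathcal U$. Neither holds in general for a \emph{fixed} admissible family: the zero locus of the minor ideal can be strictly larger than the singular locus of $\dd$ (the paper devotes the remark immediately following the proof to exactly this issue, introducing Sert\"oz's ``good generators'' to force equality). The last item of the preceding lemma only says that \emph{for each} regular point there exists \emph{some} admissible family avoiding it, not that a single family works everywhere. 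What is true and sufficient is the inclusion $\mathcal U\setminus Z(I_{\mathrm{im}(\dd(e_\bullet))})\subseteq M_{\mathrm{reg},\dd}\cap\mathcal U$, with the smaller set open and dense in $\mathcal U$ (by admissibility the minors are not identically zero, and their common zero locus is a proper analytic subset). On that smaller set $\sigma$ and the Pl\"ucker-composed $H$ agree, and since the closure of a continuous map over a dense open subset equals the closure over the whole, your conclusion stands. This is precisely how the paper proceeds: it defines $N$ and $H$ on $\mathcal U\setminus Z(\mathcal I_{\mathfrak b})$ rather than on all of $M_{\mathrm{reg},\dd}\cap\mathcal U$, and then argues $\overline{\sigma(\mathcal U\setminus M_{\mathrm{sing}})}=\overline{N(\mathcal U\setminus Z(\mathcal I_{\mathfrak b}))}$ by density.
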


A practical consequence of Proposition \ref{prop:thisisNash} consists in allowing to apply the existing literature about monoidal transformations with respect to ideals, e.g., criterions for smoothness  for monomial ideals \cite{zbMATH01623323}, or for tame monomial ideals \cite{FABER20111805}. Also, notice that Proposition \ref{prop:thisisNash}  still applies in the context of smooth differential geometry, provided that each point of $M$ admits local coordinates on which $ \dd$ is given by a matrix whose coefficients are real analytic. These coordinates even do not need to glue in a real-analytic manner. Here is an application about the smoothness of $ \tilde{M}$.

\begin{corollary}
Let $M$ be a complex manifold, and $ \dd : E \to F$ a vector bundle morphism.
If $ M_{\mathrm{sing}}$ is a smooth submanifold of $M $, and if every point of $m$ admits local  admissible sections $ e_\bullet = e_1, \dots, e_k$ such that there exists $ \chi \in \hat{\mathcal O}$ and  $n \in \mathbb N$ that satisfy
  $$  I_{{\mathrm{im}}(\dd(e_\bullet))}  = \chi \mathcal I_{\mathrm{sing}}^n , $$
  where $ \mathcal I_{\mathrm{sing}}$ is the ideal of functions vanishing on the singular locus,  
then $ \tilde{M} $ is a smooth manifold.
\end{corollary}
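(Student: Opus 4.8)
The plan is to reduce the statement to a local question near each point of $M$ and then combine the hypothesis with the Moody criteria together with Proposition~\ref{prop:thisisNash}. First I would recall, via Proposition~\ref{prop:thisisNash}, that $\tilde M$ is the Nash blowup of $\dd$, and that by the Lemma immediately preceding it, near any point $m\in M$ the pair $(\tilde M,\pi)$ is \emph{locally} the monoidal transformation with respect to the ideal $I_{\mathrm{im}(\dd(e_\bullet))}$ for any admissible family $e_\bullet$. Since smoothness is a local property, it suffices to exhibit, around each $m$, an admissible family $e_\bullet$ for which the local model $\mathrm{Bl}_{I_{\mathrm{im}(\dd(e_\bullet))}}(\mathcal U)$ is a smooth complex manifold.

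Next I would invoke the hypothesis: near $m$ there is an admissible family $e_\bullet$, an element $\chi\in\hat{\mathcal O}$ and an integer $n$ with $I_{\mathrm{im}(\dd(e_\bullet))}=\chi\,\mathcal I_{\mathrm{sing}}^n$. The Moody criteria (with $\mathcal K=\chi\cdot\mathcal O$, which is a finitely generated sub-$\mathcal O$-module of $\hat{\mathcal O}$, and the observation that $\mathrm{Bl}_{\mathcal I^n}=\mathrm{Bl}_{\mathcal I}$ since $\mathcal I^n$ and $\mathcal I$ have the same associated monoidal transformation) then give a canonical isomorphism over $M$ between $\mathrm{Bl}_{I_{\mathrm{im}(\dd(e_\bullet))}}(\mathcal U)$ and $\mathrm{Bl}_{\mathcal I_{\mathrm{sing}}}(\mathcal U)$. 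Hence, locally, $\tilde M$ is identified with the monoidal transformation of $\mathcal U$ with respect to the ideal of functions vanishing on the singular locus. At this point the problem is reduced to: \emph{the blowup of a complex manifold along a smooth submanifold is smooth}. This is the classical fact that the blowup of $\mathbb C^N$ along a linear subspace $\mathbb C^{N-c}$ is smooth (it is a rank-$c$ subbundle-type incidence variety, fibered over $\mathbb P^{c-1}$), and since $M_{\mathrm{sing}}$ is a smooth submanifold one can choose local coordinates in which $\mathcal I_{\mathrm{sing}}$ is generated by a subset of the coordinate functions, reducing to exactly this linear situation; one then checks the gluing of these smooth charts is via the transition maps already recorded in Remark~\ref{rmk:glueing}, so the global $\tilde M$ is smooth.

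The main obstacle I expect is the bookkeeping around the \emph{local} nature of the hypothesis and of the identification in Proposition~\ref{prop:thisisNash}: the admissible family, the element $\chi$, and even the coordinates realising $M_{\mathrm{sing}}$ as a coordinate subspace are all only locally defined, and a priori the various local identifications $\tilde M|_{\mathcal U}\simeq \mathrm{Bl}_{\mathcal I_{\mathrm{sing}}}(\mathcal U)$ need not be induced by a single globally defined ideal sheaf; what saves us is precisely that smoothness need only be verified chart by chart, and that the transition functions between overlapping charts are the $\hat{\mathcal O}$-valued determinants from Equation~\eqref{eq:matrix}, which do not affect the smoothness of the total space. A secondary, more routine point is to justify carefully that $\mathrm{Bl}_{\mathcal I^n}=\mathrm{Bl}_{\mathcal I}$ and that multiplication of an ideal by a single element $\chi\in\hat{\mathcal O}$ does not change the monoidal transformation — both of which follow immediately from the Moody criteria (take $\mathcal K=\mathcal O$ or $\mathcal K=\chi^{-1}\mathcal O$ as appropriate) but should be stated explicitly so that the reduction to the smooth-center case is airtight.
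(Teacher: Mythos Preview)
Your proposal is correct and follows essentially the same approach as the paper: apply the Moody criteria to reduce $\mathrm{Bl}_{I_{\mathrm{im}(\dd(e_\bullet))}}$ to $\mathrm{Bl}_{\mathcal I_{\mathrm{sing}}}$ (using that powers of an ideal and multiplication by a single element of $\hat{\mathcal O}$ leave the monoidal transformation unchanged), then invoke the classical fact that the blowup along a smooth center is smooth. The paper's own proof is a single sentence invoking exactly these three ingredients, whereas you have helpfully spelled out the locality of the argument and the gluing via Remark~\ref{rmk:glueing}; the additional detail is sound but not a different strategy.
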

\begin{proof}
This follows from criteria of Moody recalled above and the fact that the monoidal transformation with respect to an ideal and its powers are the same, together with the fact that the monoidal transformation with respect to the ideal of functions vanishing on a smooth submanifold is a smooth manifold.
\end{proof}


\begin{proof}[Proof of Proposition \ref{prop:thisisNash}]
We only give a sketch of the proof, since it is equivalent to the one given in 
\cite{Nobile,Ali-Sinan}. Let $\mathcal{U}$ be an open subset of $M$ that trivializes both $F$ and $E$. Let $(u_1, \ldots, u_d)$ and $(e_1, \ldots, e_{d'})$ be local frames of $F$ and $E$, respectively, on $\mathcal{U}$. The sections $\dd(u_1),\ldots,\dd(u_d)$ are local real analytic/holomorphic generators of $\mathrm{im}(\dd)|_\mathcal{U}\subset \Gamma(E)|_\mathcal{U}$. We have $d\geq k$, where $k$ is the rank of $\dd$ on regular points. Let $M_{\mathrm{sing}}:=M\setminus M_{\mathrm{reg},\dd}$. If $\mathcal{U}\cap M_{\mathrm{sing}}$ is empty, then there is nothing to prove. Assume that $\mathcal{U}\cap M_{\mathrm{sing}}\neq \emptyset$. There exists real analytic/holomorphic functions $f_i^j\in\mathcal{O}_\mathcal U$ with $i=1,\ldots,d'$ and $j=1,\ldots,d$ such that $$\displaystyle{
\dd(u_j)=\sum_{i=1}^{d'}f_i^je_i}.$$
    
Now, consider the $d'\times d$-matrix $\mathfrak M=(f_i^j)$. The rank of $\mathfrak M$ is equal to  $r$ on $M_{\mathrm{reg},\dd}$ and is less than $r$  on $M_\mathrm{sing}$. Let $q=\mathrm{rk}(E)-k$. For $x\in M_{\mathrm{reg},\dd}$, denote by  $[\mathfrak M (x)]$ be the point $\mathrm{im}(\dd_x)\in \mathrm{Gr}_{-q}(E)$ generated by the columns of $\mathfrak{M}(x)$. In order to pick $d'\times k$-matrix that represents $\mathrm{im}(\dd_x)$,  consider the following indexing set

$$I_n=\left\{(n_1,\ldots,n_k)\in \mathbb N^{d'}\mid 1\leq n_1<\cdots<n_k\leq n\right\}.$$ We use $I_{d'}$ to pick $k$-rows and $I_d$ to pick $r$-columns. Pick  $\mathfrak a\in I_{d'}$ and $\mathfrak b\in I_d$ and consider the $d'\times k$-matrix $\mathfrak M_\mathfrak b=(f_i^j)_{i=1,\ldots, d', j\in \mathfrak b}$ and $\mathcal{I}_\mathfrak b$ the ideal generated by all the $k\times k$-minors of $\mathfrak M_ \mathfrak b$, i.e., the ideal generated by the  determinants $\Delta_{\mathfrak a \mathfrak b}:=\mathrm{det}(f_i^j)_{i\in \mathfrak a,j\in\mathfrak b }$ with $\mathfrak a\in I_{d'}$.
    \item [] 

Notice that the zero locus $Z(\mathcal I)$ of the ideal $\mathcal I$ is exactly $\mathcal{U}\cap M_{\mathrm{sing}}$. There exists $\mathfrak b\in I_d$ such that $\mathcal{I}_\mathfrak b\neq 0$, let us pick such a $\mathfrak b\in I_d$ and consider the analytic variety which is given by  the zero locus $Z(\mathcal{I}_\mathfrak b)$ of the ideal $\mathcal{I}_\mathfrak b$. We define the following maps

\begin{equation}
    N\colon \mathcal{U}\setminus Z(\mathcal{I}_\mathfrak b)\longrightarrow \mathcal{U}\times \mathrm{Gr}_{-q}( \mathbb K^{d'}),\; x\longmapsto \left(x, [\mathfrak M_\mathfrak b(x)]\right) 
\end{equation}
and 

\begin{equation}
    H\colon \mathcal{U}\setminus Z(\mathcal{I}_\mathfrak b)\longrightarrow \mathcal{U}\times \mathbb P^{\tau},\; x\longmapsto \left(x, [\Delta_{\mathfrak a_0 \mathfrak b}\colon\cdots\colon \Delta_{\mathfrak a_{\tau} \mathfrak b}]\right) 
\end{equation}
Here, $\tau=\begin{pmatrix}
    d'\\k
\end{pmatrix}-1$ and $\mathfrak a_1,\ldots, \mathfrak a_\tau\in I_{d'}$. 

\begin{itemize}
 \item [(a)] We have $\overline{N(\mathcal{U}\setminus Z(\mathcal{I}_\mathfrak b))}\simeq \overline{H(\mathcal{U}\setminus Z(\mathcal{I}_\mathfrak b))}$ : to see this, consider the Plücker embedding [Chapter 1, Section 5, \cite{Weyman_2003}] $${Pl}\colon \mathrm{Gr}_{-q}(\mathbb K^{d'})\hookrightarrow \mathbb P^\tau$$ and define the map $\mathrm{id}\times Pl \colon \mathcal{U}\times \mathrm{Gr}_{-q}(\mathbb K^{d'})\longrightarrow \mathcal{U}\times \mathbb P^\tau$. We have that $(\mathrm{id}, Pl)\circ N=H$. Therefore,  the closure of the image of $N$ and $H$ are isomorphic.
    \item [(b)] The maps $\sigma\colon M_{\mathrm{reg},\dd}\longrightarrow \mathrm{Gr}_{-q}(E),\, x\longmapsto \mathrm{im}(\dd_x)$ of Equation \eqref{eq:Nash-vb-map} and $N$ concide on $\mathcal{U}\setminus Z(\mathcal{I}_\mathfrak b)$. This implies that $\overline{\sigma(\mathcal{U}\setminus \mathcal{U}\cap M_{\mathrm{sing}})}=\overline{N(\mathcal{U}\setminus Z(\mathcal{I}_\mathfrak b))}$. Also, the closure $\overline{N(\mathcal{U}\setminus Z(\mathcal{I}_\mathfrak b))}$ is a monoidal transformation of $\mathcal{U}$ with center $Z(\mathcal{I}_\mathfrak b)$ and is an analytic variety.
\end{itemize}
Different open neighborhoods glue together to give an analytic variety by the universal property of monoidal transformations. This proves the statement.
%
%
\end{proof}

\begin{remark}
There is another natural sheaf  $ \mathcal I_{\mathrm{im}(\dd)} \subset \mathcal O$  of ideals that lead to another monoidal transformation. Consider the sheaf of ideals of $ \mathcal O$  of all local functions which, near every point, are of the form $ \langle \dd (e_1 ) \wedge \dots \wedge\dd (e_k), \alpha \rangle$ for some local sections $ e_1, \dots, e_k \in \Gamma(F)$ and $ \alpha \in \Gamma(\wedge^k E^*) $. Equivalently, it can also be seen as the sheaf of ideals of $\mathcal O $ generated by all $ k \times k$ minors of the $ {\mathrm{rk}}(E) \times {\mathrm{rk}}(F)$-matrix that represents $\dd$ on given local trivializations of $F$ and $E$.  This second monoidal transformation $(\mathrm{Bl}_{\mathcal I_{\mathrm{im}(\dd)}}(M), \pi) $ and $(\tilde{M} ,\pi)$ enters into a commutative diagram as follows: \begin{equation}\label{dia:dottedarrow2}  \xymatrix{  \mathrm{Bl}_{\mathcal I_{\mathrm{im}(\dd)}}(M) \ar@{.>}[r]\ar[d]_\pi&\ar[d]^\pi \tilde{M}\\ M \ar[r]^{=}& M } .  \end{equation}  
This can be seen as follows. Let $ \mathcal U$ be a coordinate neighborhood on which $E $ and $F$ are trivial bundles. Let $ e_1, \dots, e_{\mathrm{rk}(E)}$ be a trivialization of $E$ over $ \mathcal U$. We say that a subset $w$ of $k$-elements $ i_1, \dots, i_k$  in $ \{1, \dots,{\mathrm{rk}(E)} \}$ is \emph{admissible} if the family $e_\bullet(w) :=  e_{i_1}, \dots, e_{i_k}$ is admissible.  
  Then the sheaf of ideals $ \mathcal I_\dd$ is generated by
   $$   \mathcal I_{\mathrm{im}(\dd)} = \sum_{w \in \mathrm{Adm}} \mathcal I_{\mathrm{im}(\dd(e_\bullet(w)))}  $$
   where $\mathrm{Adm}$ is the collection of all admissible subsets in $ \{1, \dots,{\mathrm{rk}(E)} \}$. Since $ \mathrm{Adm} $
 is not empty, we can select one, say $ w_0$, and we then have  in view of Equation \eqref{eq:changeadmissible}
 $$  \mathcal I_{\mathrm{im}(\dd)}  =  \mathcal K \,  \mathcal I_{\mathrm{im}(\dd(e_\bullet(w_0)))}  $$
 where $ \mathcal K \subset \hat{\mathcal O}(\mathcal U)$ is the sub-$ \mathcal O(\mathcal U)$-module generated by the functions $ \phi_{ e_\bullet ( w) / e_\bullet ( w_0) } $ defined as in \eqref{eq:changeadmissible}. The existence of the dotted arrow is then a consequence of the criteria of Moody.
 \end{remark}

\begin{remark}
    Notice that, using the notations of the proof of Proposition \ref{prop:thisisNash}, we have $\mathcal{ U}\cap M_{\mathrm{ sing}}\subseteq Z(\mathcal{I}_{{\mathrm{im}}(\dd(e_\bullet)}) $. 
    In general, there is no equality. That is, the zero locus of the ideal with respect to which one considers the monoidal transformation does not need to coincide with the singular
locus $M_{\mathrm{sing}}$ of $\dd$. To have equality, A. Sertöz \cite{Ali-Sinan} introduced a notion of good generators for $\mathrm{im}(\dd)\subseteq \Gamma(E)$ as follows: $\mathrm{im}(\dd)\subseteq \Gamma(E)$  admits a \emph{good system of generators} if for any $x\in M_{\mathrm{sing}}$ there exists an open neighborhood $\mathcal{U}$  of $x$ and  sections  $s_1, \ldots, s_k$ of $\mathrm{im}(\dd)$ such that

\begin{enumerate}
    
    \item  $s_1, \ldots, s_k$ span $\mathrm{im}(\dd)|_{\mathcal{U}\cap M_{\mathrm{reg},\dd}}$,
\item $s_1, \ldots, s_k$ are linearly dependent on $\mathcal{U}\cap M_{\mathrm{sing}}$.

\end{enumerate}

where $k$ is the rank of $\dd\colon F\to E$ on $M_{\mathrm{sing}}$. This family is in particular admissible. Also, such a family exists  when $\mathrm{im}(\dd)$ is a projective submodule of $\Gamma(E)$.
With such generators one has $$Z(\mathcal{I}_\mathfrak b)=\mathcal{ U}\cap M_{\mathrm{ sing}}$$ for every neighborhood  $\mathcal{U}$ of a singular point. Nevertheless, the ideal $\mathcal{I}_{M_{\mathrm{sing}}}$ of vanishing functions on $M_{\mathrm{sing}}$ does not have to be equal to $\mathcal{I}_\mathfrak b$ on $\mathcal{U}$ but to its radical by the Nullstellensatz theorem \cite{zbMATH00704831}. 
%
%
%
%
%
\end{remark}

In the sequel, we will not require the Nash blowup space $\widetilde M$ to be smooth. The established properties of $\widetilde M$ are sufficient to state the results we need.

\subsection{Nash blowups of singular foliations: main constructions and results}\label{Nash-foliations} Let $\mathfrak F$ be  a locally  finitely generated $\mathcal{O}$-submodule of $\mathfrak{X}(M)$, i.e., $\mathfrak F$ is a sub-sheaf $\mathfrak{X}(M)$ such that every point of $M$ admits an open neighborhood $\mathcal{U}$ and a finite number of vector fields $X_1,\ldots, X_n \in \mathfrak{X}(\mathcal U)$ such that for all $\mathcal V\subseteq \mathcal{U}$,  $\mathfrak{F}|_\mathcal{\mathcal{V}}=\sum_{k=1}^nf_kX_k|_{\mathcal{V}}$ for some $f_k\in \mathcal{O}_{\mathcal{V}}$. We assume that there exists a \emph{geometric resolution}, i.e.,  a complex of vector bundles $(E, \dd,\rho)$ of finite length \begin{equation}
    \label{eq:resolutions2}
  \xymatrix{ 0\cdots \ar[r] & E_{-i-1} \ar[r]^{{\dd^{(i+1)}}} \ar[d] & 
     E_{ -i} \ar[r]^{{\dd^{(i)}}}
     \ar[d] & E_{-i+1} \ar[r] \ar[d] & \ar@{..}[r] & \ar[r]^{{\dd^{(2)}}}& E_{-1} \ar[r]^{\rho=\dd^{(1)}} \ar[d]& TM \ar[d] \\M \cdots
      \ar@{=}[r] & \ar@{=}[r] M  &  \ar@{=}[r] M 
      &  \ar@{=}[r] M  &\ar@{..}[r] & \ar@{=}[r]   &  \ar@{=}[r] M  & M}
    \end{equation}
such that $\rho(\Gamma(E_{-1}))=\mathfrak{F}$ and which is exact as in Equation \eqref{eq:exact}. In the smooth case, geometric resolutions exist on every relatively compact open subset of $M$ such that every point admits local coordinates on which the local generators $\mathfrak F$ are real analytic, see \cite{LLS} or \cite{LLL} Section 2.6. In the holomorphic case, the existence of a geometric resolution in a neighborhood of each point is a property of coherent sheaves, see \cite{LLL} Section 2.6.

For every $i\geq 0$, let $M_{\mathrm{reg}^i,\mathfrak{F}}$
be the open dense subset of $M$ made of all points $ m \in M$ such that the image $\mathrm{im}(\dd^{(i+1)})$ of  the vector bundle morphism $E_{-i-1}\stackrel{\dd^{(i+1)}}{\longrightarrow}E_{-i}$  is of constant rank on some neighborhood. 
For $i=0$, we define it to be  open dense subset of $M$ made of all points $ m \in M$ such that the vector bundle morphism $\rho\colon E_{-1}\to TM$ is of constant rank on some neighborhood.
To avoid having to distinguish this case, from now on, we set $E_0:=TM$ and $\dd^{(1)}=\rho$  by convention.

It deserves to be noticed that $m \in M_{\mathrm{reg}^i,\mathfrak{F}}$ if and only if $m$ admits a neighborhood on which $\mathrm{im}(\dd^{(i+1)})=\ker (\dd^{(i)})$. By Lemma \ref{lemma:M_reg}(b), we have $M_{\mathrm{reg}^1,\mathfrak F}=M_{\mathrm{reg}^0,\mathfrak F}$. Since any two geometric resolutions of some $\mathfrak F $ homotopy equivalent, and since this property is invariant under homotopy equivalence, the open dense subset $M_{\mathrm{reg}^i,\mathfrak{F}}$ does not depend on the choice of a geometric resolution of $\mathfrak F\subseteq \mathfrak X(M)$. Also, we will denote $M_{\mathrm{reg}^1, \mathfrak F}=M_{\mathrm{reg}^0,\mathfrak F}$ simply by $M_{\mathrm{reg}, \mathfrak F}$ which coincides with the open dense subset of regular points of the singular foliation $\mathfrak F$.  We have a sequence of inclusions of open dense subsets: 

$$   M_{\mathrm{reg},\mathfrak{F}} \subset M_{\mathrm{reg}^2,\mathfrak{F}}\subset M_{\mathrm{reg}^3,\mathfrak{F}}\subset \cdots \subset M$$

 These points have the following characterizations:
\begin{enumerate}
\item[$i=0,1$ :] $m \in M_{\mathrm{reg},\mathfrak{F}}$ if and only if there is a neighborhood on which the distribution $$m'\mapsto T_{m'} \mathfrak F =\left\{ X_{|_{m'}} | X \in \mathfrak F\right\}\subset TM$$ has constant rank, i.e., is a regular foliation.
\item[$i=2$ :] $ m \in M_{\mathrm{reg}^2,\mathfrak{F}}$ if and only if there is a neighborhood on which $ \mathfrak F\subset \mathfrak X(M)$ is a free module over functions,
\item[$i=3$ :]  $ m \in M_{\mathrm{reg}^3,\mathfrak{F}}$ if and only if there is a neighborhood on which $ \mathfrak F$ admits a geometric resolution of length $2$,
\item[$ \vdots$\, \,]  
\item[$i=n+1 $ : ]  $ m \in M_{\mathrm{reg}^{n+1}, \mathfrak F}$ if and only if there is a neighborhood on which $ \mathfrak F$ admits a geometric resolution of length $n$.
\end{enumerate}


\subsubsection{The blowup spaces associated to a singular foliation}
The blowup spaces are constructed as follows. Let $(M, \mathfrak F)$ be a singular foliation and $(E, \dd, \rho)$ be a geometric resolution of $\mathfrak F$ as in Equation \eqref{eq:resolutions2}. For every $i\geq 0$, we apply the Nash construction to $\dd^{(i+1)}\colon E_{-i-1}\to E_{-i}$. By convention, for $i=0$, this means that we apply it to $\rho\colon E_{-1}\to TM$, because we set $ E_0=TM$, and $ \dd^{(1)}=\rho$. Let us recall this construction.
\begin{enumerate}
    \item[(a)]  Let $ \Pi_i\colon \mathrm{Gr}_{-r_i}(E_{-i})\longrightarrow M$ be the Grassmann bundle of $E_{-i}$ with $r_i$ is as in Lemma \ref{lemma:M_reg} (d). Consider the natural section of $\Pi_i$ on $ M_{\mathrm{reg}^{i}, \mathfrak  F}$ defined by : \begin{equation}
    \sigma_i\colon M_{\mathrm{reg}^{i}, \mathfrak  F}\longrightarrow \mathrm{Gr}_{-r_i}({E_{-i}}),\, x\longmapsto \mathrm{im}\left(\dd^{(i+1)}_x\right)
\end{equation}
\item[(b)] Let $\widetilde{M}_i:= \overline{\sigma_i(M_{\mathrm{reg}^{i}, \mathfrak  F})}$ be the closure of the image of $\sigma_i$ in $\mathrm{Gr}_{-r_i}(E_{-i})$.  Let $\pi_i\colon \widetilde{M}_i\longrightarrow M$ denote the restriction of $\Pi_i$ to  $\widetilde{M}_i$. 
\end{enumerate}If $(E,\dd, \rho)$ is of finite length, we also apply the Nash construction to  the vector bundle morphism $\dd=\oplus_{i\geq 2}\dd^{(i)}\colon \oplus_{i\geq 2}E_{-i}\to \oplus_{i\geq 1}E_{-i}$ by considering the section $$\sigma_{\infty}\colon M_{\mathrm{reg},\mathfrak  F}\longrightarrow\coprod_{x\in M}\prod_{ i\geq 1}\mathrm{Gr}_{-r_i}({E_{-i}}|_x),\;x\mapsto(\sigma_1(x),\sigma_2(x), \ldots, \sigma_i(x),\ldots \,)$$ and define $\widetilde{M}_{\infty} :=\overline{\sigma_{\infty}(M_{\mathrm{reg}, \mathfrak  F})}$ which comes with a natural map $\pi_\infty\colon \widetilde M_\infty\to M$.

\begin{remark}
    $\widetilde{M}_{\infty}$ should be understood as the tuples made of elements $V_1\in \mathrm{Gr}_{-r_1}({E_{-1}}|_x), \ldots, V_i\in \mathrm{Gr}_{-r_i}({E_{-i}}|_x),\ldots$ such that there exists $(x_n)\in M_{\mathrm{reg},\mathfrak F}^{\mathbb{N}}$\, {such that} $\mathrm{im}\left(\dd^{(i+1)}_{x_n}\right)\underset{n \to +\infty}{\longrightarrow} V_i\; as \; x_n\underset{n \to +\infty}{\longrightarrow}x$ for all $i\in \mathbb{N}$. It is important to notice that all the $V_i$'s are given by the same sequence $(x_n)\in M_{\mathrm{reg},\mathfrak F}^{\mathbb{N}}$. In particular, for every $i\geq 1$ there is a natural map

    \begin{equation*}  \xymatrix{  \widetilde M_\infty \ar@{.>}[r]\ar[d]_{\pi_\infty}&\ar[d]^{\pi_i} \widetilde M_i\\ M\ar[r]^{=}& M.}  \end{equation*}
\end{remark}

By Proposition \ref{prop:bi}, for each $i\geq 0$, the projection $\pi_i\colon \widetilde M_i\rightarrow M$ is invertible on the open dense subset $M_{\mathrm{reg}^{i}, \mathfrak{F}}$, it is proper and surjective. Moreover,  for each point $x\in M$ and for every $i\geq 0$, the fiber $\pi^{-1}_i(x)$ is non-empty. Also, $\pi^{-1}_\infty(x)$ is non-empty.

\begin{definition}
   For each $i\geq 1$, the space $\widetilde M_i$ together with the map $\pi_i\colon \widetilde M_i\to M$ is called the $i$-th \emph{blowup space} of $(M,\mathfrak F)$. Likewise, $\pi_\infty\colon\widetilde {M}_\infty\to M$ is called the \emph{last blowup space} of $(M,\mathfrak F)$. 
\end{definition}

\begin{remark}
    $\pi_0\colon \widetilde M_0\to M$ is the Nash blowup of the singular foliation $\mathfrak{F}$ in the sense of \cite{JPTS}. Also, $\pi_1\colon \widetilde M_1\to M$ is the blowup in the sense of \cite{Rossi} and of \cite{MohsenOmar}. While for $i\geq 2$, the $\pi_i\colon \widetilde M_i\to M$'s do not exist in literature as blowups of the singular foliation $\mathfrak F$ to our knowledge,  but they still can be seen as a class of Nash blowups in the sense of \cite{Ali-Sinan}.
\end{remark}

As sets, $\widetilde M_i$, $\widetilde M_\infty$ do not need to be manifolds. They can be singular, see Section  \ref{sec:smooth}. 
\begin{proposition}\label{prop:locally-aff}
 Let $\mathfrak F$ be a holomorphic singular foliation or a  real analytic singular foliation. 
 Then, for every  $i\geq 0$ or $i=\infty$, $\widetilde M_i$ is an analytic variety.
 For $ M=\mathbb K^n$ and $ \mathfrak F$ a singular foliation with polynomial generators, it is even a quasi-projective variety. 
Moreover, it is obtained, in a neighborhood of every point though a monoidal transformation with respect to an ideal of the sheaf of functions.
 
The same results hold for $\widetilde M_\infty$.
\end{proposition}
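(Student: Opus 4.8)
The plan is to reduce Proposition~\ref{prop:locally-aff} to the already-established Proposition~\ref{prop:thisisNash} together with the gluing mechanism of Remark~\ref{rmk:glueing}. Recall that for each fixed $i\geq 0$ the space $\widetilde M_i$ is, by construction, the Nash blowup of the single vector bundle morphism $\dd^{(i+1)}\colon E_{-i-1}\to E_{-i}$ (with the convention $E_0=TM$, $\dd^{(1)}=\rho$), and that this morphism has constant rank on the open dense subset $M_{\mathrm{reg}^i,\mathfrak F}$. In the holomorphic or real analytic setting every vector bundle morphism is locally given, after choosing trivializations of $E_{-i-1}$ and $E_{-i}$ over a coordinate neighborhood $\mathcal U$, by a matrix with holomorphic (resp. real analytic) entries; hence the hypotheses of Proposition~\ref{prop:thisisNash} are satisfied locally. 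That proposition then identifies $\widetilde M_i\cap\Pi_i^{-1}(\mathcal U)$ with the monoidal transformation of $\mathcal U$ with respect to the ideal $\mathcal I_{\mathrm{im}(\dd^{(i+1)}(e_\bullet))}$ generated by the $k\times k$ minors of an admissible family, where $k$ is the generic rank of $\dd^{(i+1)}$. By the universal property of monoidal transformations (and the change-of-admissible-family formula \eqref{eq:changeadmissible}, which shows that two such ideals differ by an invertible factor $\phi_{e_\bullet/e_\bullet'}\in\hat{\mathcal O}(\mathcal U\cap\mathcal U')$), these local pieces glue, via Remark~\ref{rmk:glueing}, into a global analytic variety $\widetilde M_i$ with $\pi_i$ a proper holomorphic (resp. real analytic) map. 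This simultaneously proves the "analytic variety" claim and the last sentence about monoidal transformations.

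For the quasi-projective statement when $M=\mathbb K^n$ and $\mathfrak F$ has polynomial generators, I would note that a geometric resolution can then be taken with trivial bundles $E_{-i}=\mathbb K^n\times\mathbb K^{\mathrm{rk}(E_{-i})}$ and polynomial differentials $\dd^{(i+1)}$, so that $\sigma_i$ is a polynomial (hence regular) map into the trivial Grassmann bundle $\mathbb K^n\times\mathrm{Gr}_{-r_i}(\mathbb K^{\mathrm{rk}(E_{-i})})$, which is quasi-projective. The closure $\widetilde M_i$ of the image of a regular map between quasi-projective varieties, taken in the Zariski topology, is quasi-projective; and since the foliation is polynomial the Zariski closure and the analytic closure of $\sigma_i(M_{\mathrm{reg}^i,\mathfrak F})$ coincide (the image is a constructible set and the Nash/monoidal description realizes $\widetilde M_i$ as a component of the zero locus of the Plücker-type ideal $\mathcal J$ from Section~\ref{sec:smooth}, which is polynomial). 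Alternatively, one may invoke the Plücker embedding $Pl$ as in part (a) of the proof of Proposition~\ref{prop:thisisNash} to realize $\widetilde M_i$ inside $\mathbb K^n\times\mathbb P^\tau$ as the closure of the graph of a rational map, manifestly quasi-projective.

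Finally, for $\widetilde M_\infty$ one runs the same argument for the morphism $\dd=\bigoplus_{i\geq 2}\dd^{(i)}\colon\bigoplus_{i\geq 2}E_{-i}\to\bigoplus_{i\geq 1}E_{-i}$, whose Nash blowup — by Remark~\ref{rmk:invariance} on the invariance under adding trivial summands, combined with the observation that the product of Grassmann bundles $\prod_{i\geq 1}\mathrm{Gr}_{-r_i}(E_{-i}|_x)$ is the Grassmann bundle of the direct sum restricted to the "block-diagonal" locus — is again covered locally by monoidal transformations with respect to a minor ideal; finiteness of the length guarantees this is a finite direct sum, so no issue with infinite products arises. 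The main obstacle, and the step requiring the most care, is verifying that the locally-defined monoidal transformations actually satisfy the gluing hypothesis of Remark~\ref{rmk:glueing} uniformly in $i$ — i.e. that across overlapping charts the minor ideals for $\dd^{(i+1)}$ (and for the assembled $\dd$) are related by a single invertible function in $\hat{\mathcal O}(\mathcal U\cap\mathcal U')$ rather than merely being locally proportional on the regular part; this is exactly what \eqref{eq:changeadmissible} provides, but one must check that the generic-rank hypothesis underlying the notion of "admissible family" holds on all of $M_{\mathrm{reg}^i,\mathfrak F}$ and is independent of the chosen resolution, which follows from Lemma~\ref{lemma:M_reg}(d) and the homotopy-invariance of geometric resolutions.
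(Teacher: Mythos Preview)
Your proposal is correct and follows essentially the same route as the paper: invoke the existence of a geometric resolution with holomorphic/real analytic/polynomial differentials (citing \cite{LLS}), and then apply Proposition~\ref{prop:thisisNash} to $\dd^{(i+1)}$ to identify $\widetilde M_i$ locally with a monoidal transformation. The paper's proof is considerably terser---it simply asserts the existence of such a resolution by trivial bundles and defers everything else to Proposition~\ref{prop:thisisNash}---whereas you spell out the gluing via Remark~\ref{rmk:glueing} and treat the $\widetilde M_\infty$ case explicitly; but the underlying argument is the same.
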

\begin{proof}
In all the cases above, there exists a geometric resolution $(E, \dd, \rho)$ of $\mathfrak{F}$ of finite length by trivial vector bundles, \cite{LLS}. Moreover, $\dd$ and $\rho$ are given by holomorphic, real analytic or polynomials depending on the context. Proposition \ref{prop:thisisNash} applied to $\dd^{(i)}\colon E_{-i}\to E_{-i+1}$ or $\rho\colon E_{-1}\to TM$ implies that $\widetilde M_i$ is  a analytic subvariety of the Grassmann bundle, given by a monoidal transformation.
\end{proof}

The following assertion follows from the existence of homotopy equivalence between any two geometric resolutions.

\begin{theorem}\label{thm:independence}
Let $i\in \mathbb N_0$ or $i=\infty$.  Let $\mathfrak F$ be a singular foliation on $M$ that admits a geometric resolution. For any two geometric resolutions of $\mathfrak{F}$, the corresponding   $\widetilde M_i$ are canonically isomorphic. 
\end{theorem}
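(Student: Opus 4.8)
The plan is to show that the construction $\mathfrak F \mapsto (\widetilde M_i, \pi_i)$ depends on the geometric resolution $(E,\dd,\rho)$ only through data that is preserved by homotopy equivalence, and then to invoke the fact (recalled in the excerpt) that any two geometric resolutions of $\mathfrak F$ are homotopy equivalent. Concretely, first I would reduce to a normal form for homotopy equivalences: recall from \cite{LLS} that any homotopy equivalence between two geometric resolutions $(E,\dd,\rho)$ and $(E',\dd',\rho')$ can be factored through, or rather realized after, adding contractible complexes — i.e. it suffices to treat two elementary moves, (1) an isomorphism of complexes of anchored vector bundles over the identity of $M$, and (2) the addition of a ``trivial'' summand $\cdots 0 \to A \xrightarrow{\mathrm{id}} A \to 0 \cdots$ placed in two consecutive degrees (with zero anchor when it touches degree $-1$). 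Since any chain homotopy equivalence of bounded complexes of vector bundles becomes an honest isomorphism after adding such contractible pieces (a standard fact, used in \cite{LLS,CLRL}), establishing invariance under (1) and (2) suffices.

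For move (1): an isomorphism $\phi_\bullet\colon E_\bullet \xrightarrow{\sim} E'_\bullet$ of complexes over $\mathrm{id}_M$ intertwines $\dd^{(i+1)}$ with $\dd'^{(i+1)}$, so $\phi_{-i}$ restricts, fiberwise over each $x$, to a linear isomorphism $E_{-i}|_x \to E'_{-i}|_x$ carrying $\mathrm{im}(\dd^{(i+1)}_x)$ to $\mathrm{im}(\dd'^{(i+1)}_x)$. Hence the induced bundle isomorphism $\mathrm{Gr}_{-r_i}(E_{-i}) \to \mathrm{Gr}_{-r_i}(E'_{-i})$ over $\mathrm{id}_M$ carries $\sigma_i$ to $\sigma'_i$ on $M_{\mathrm{reg}^i,\mathfrak F}$ (the open dense subset is the same by the homotopy-invariance already noted in the text), and therefore carries the closure $\widetilde M_i = \overline{\sigma_i(M_{\mathrm{reg}^i,\mathfrak F})}$ to $\widetilde M'_i$, commuting with $\pi_i$. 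For $i=\infty$ the same argument applies componentwise, using the induced isomorphism $\prod_{i\ge 1}\mathrm{Gr}_{-r_i}(E_{-i}|_x) \to \prod_{i\ge 1}\mathrm{Gr}_{-r_i}(E'_{-i}|_x)$, which preserves the diagonal image $\sigma_\infty$ since all components are transported by the same sequence $(x_n)$.

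For move (2): adding a contractible complex $A \xrightarrow{\mathrm{id}} A$ in degrees $-i-1,-i$ changes $E_{-i}$ to $E_{-i}\oplus A$ and $\dd^{(i+1)}$ to $\dd^{(i+1)}\oplus \mathrm{id}_A$ (and leaves, up to the isomorphisms of move (1), the neighboring maps unchanged on the $A$-summand). The key point is Remark \ref{rmk:invariance}: the Nash blowup of $\dd^{(i+1)}\colon E_{-i-1}\to E_{-i}$ equals the Nash blowup of $(a,f)\mapsto (a, \dd^{(i+1)}(f))$ from $A\oplus E_{-i-1}$ to $A\oplus E_{-i}$, and likewise adding zero summands on either side does not change it; one checks that the ranks $r_i$ shift consistently so that the Grassmann bundles are matched by the canonical map $\mathrm{Gr}_{-r_i}(E_{-i}) \hookrightarrow \mathrm{Gr}_{-(r_i+\mathrm{rk}A)}(E_{-i}\oplus A)$, $V\mapsto V\oplus A$, under which $\sigma_i$ and the new section agree. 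Hence $\widetilde M_i$ and $\pi_i$ are unchanged; for $\widetilde M_\infty$ one applies this in the relevant slot while every other slot is untouched, using that the contracting sequences $(x_n)$ are shared. Finally, the isomorphisms produced are canonical because they are induced by the homotopy equivalence, which is itself unique up to homotopy, and any two homotopic equivalences of complexes of vector bundles induce the same map on fiberwise images (the image of a linear map is unaffected by adding a map that factors through a contraction), hence the same map on Grassmann bundles and on the closures.

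The main obstacle I anticipate is the bookkeeping in move (2): making precise the claim that a homotopy equivalence of geometric resolutions can be reduced to isomorphisms plus additions of contractible complexes, in the anchored-complex category (so that the anchor and the condition $\rho(\Gamma(E_{-1}))=\mathfrak F$ are respected), and tracking how the integers $r_i$ from Lemma \ref{lemma:M_reg}(d) transform under these moves so that the Grassmann-bundle inclusions line up. The $i=\infty$ case adds the subtlety that all components must be transported by a single sequence $(x_n)$, so one must verify the transported diagonal section is again the diagonal section of the new resolution — this is immediate once move (1) and move (2) are handled componentwise, but deserves an explicit sentence.
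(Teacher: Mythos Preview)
Your approach is essentially the one the paper briefly acknowledges right after the statement: the text says that Theorem~\ref{thm:independence} ``may be seen as a consequence of Remark~\ref{rmk:invariance} since for any two resolutions, the differential map $\dd^{(i)}$ differs by transformations as in that remark,'' which is precisely your move~(2), combined with your move~(1). So your strategy is sound and is one the paper endorses.

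However, the paper's \emph{written-out} proof takes a different route. Rather than decomposing a homotopy equivalence into elementary moves, the paper first establishes (Proposition~\ref{thm:Transformation} and Corollary~\ref{cor:independence}) a canonical inclusion $\widetilde M_i \hookrightarrow \coprod_{x\in M}\mathrm{Gr}_{-(r_i-\mathrm{rk}(\dd^{(i)}_x))}(H^{-i}(\mathfrak F,x))$ into Grassmannians of the \emph{cohomology} of the resolution, a space that manifestly depends only on $\mathfrak F$. The proof in Section~\ref{sec:Proofs} then takes chain maps $\varphi,\psi$ between the two resolutions and shows directly, via a local argument with a convergent sequence of regular points and a dimension count (Lemma~\ref{lemma:dimensions}), that the induced map on cohomology carries the image of one $\widetilde M_i$ onto the other. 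Your decomposition-into-moves argument is more structural and closer to the general yoga of derived invariance; the paper's argument is more hands-on but yields the cohomological embedding as a byproduct, which the paper reuses elsewhere.

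Two small points on your write-up. First, in move~(2) the codimension bookkeeping is off: if $V$ has codimension $r_i$ in $E_{-i}$, then $V\oplus A$ has codimension $r_i$ (not $r_i+\mathrm{rk}\,A$) in $E_{-i}\oplus A$; the case where the codimension does shift is when the contractible summand sits in degrees $-i,-i+1$, so that $A$ is added to $E_{-i}$ but not to the image of $\dd^{(i+1)}$. You flagged this as the main obstacle, and indeed it needs care. Second, your canonicity argument via ``homotopic maps induce the same map on fiberwise images'' is unnecessarily delicate; it is cleaner to note that any isomorphism $\widetilde M_i \to \widetilde M_i'$ over $\mathrm{id}_M$ is determined on the dense open set $\pi_i^{-1}(M_{\mathrm{reg}^i,\mathfrak F})$, where both sides are graphs of sections over $M_{\mathrm{reg}^i,\mathfrak F}$, hence is unique by continuity.
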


Theorem \ref{thm:independence} may be seen as a consequence of Remark \ref{rmk:invariance} since for any two resolutions, the differential map $\dd^{(i)} $ differs by transformations as in that remark. However, we prefer to establish it through the following results.

\begin{proposition}\label{thm:Transformation} Assume that the sequence \eqref{eq:resolutions2} is a geometric resolution for $\mathfrak F$. For every $x\in M$, for every $i\geq 1$ and $V\in \pi^{-1}_i(x)$ one has, \begin{equation}
     \mathrm{im}(\dd^{(i+1)}_{x})\subseteq V \subseteq \ker(\dd^{(i)}_x).
 \end{equation}{In particular, for all $x\in M_{\mathrm{reg}^{i}, \mathfrak  F}$ and $i\geq 1$,\;  $\ker(\dd^{(i)}_x)=\mathrm{im}(\dd^{(i+1)}_x)=\pi_i^{-1}(x)$}.
 \end{proposition}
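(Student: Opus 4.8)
The plan is to prove the two inclusions $\mathrm{im}(\dd^{(i+1)}_x)\subseteq V$ and $V\subseteq\ker(\dd^{(i)}_x)$ separately, using that $V$ is a limit of the subspaces $\mathrm{im}(\dd^{(i+1)}_{x_n})$ along a sequence $x_n\to x$ of points in $M_{\mathrm{reg}^i,\mathfrak F}$, and then deduce the regular-point statement by a dimension count. The first inclusion is already Proposition~\ref{prop:bi}(2) applied to the vector bundle morphism $\dd^{(i+1)}\colon E_{-i-1}\to E_{-i}$: for $v\in\mathrm{im}(\dd^{(i+1)}_x)$ write $v=\dd^{(i+1)}_x u$, extend $u$ to a local section $\widetilde u$ of $E_{-i-1}$, and use continuity to get $\dd^{(i+1)}_{x_n}\widetilde u(x_n)\to v$, whence $v\in V$ since $V$ is the limit of the $\mathrm{im}(\dd^{(i+1)}_{x_n})$ in the (compact, hence Hausdorff) Grassmannian fiber and $\mathrm{im}(\dd^{(i+1)}_{x_n})$ contains $\dd^{(i+1)}_{x_n}\widetilde u(x_n)$.

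For the second inclusion I would use that, by definition of the regular locus, $\mathrm{im}(\dd^{(i+1)})=\ker(\dd^{(i)})$ on $M_{\mathrm{reg}^i,\mathfrak F}$ (this is the characterization of $M_{\mathrm{reg}^i,\mathfrak F}$ recorded just after the definition of the regular loci, following from exactness of the geometric resolution as a complex of sheaves together with local freeness of $\mathrm{im}(\dd^{(i+1)})$ there). Hence for each $n$ we have $\dd^{(i)}_{x_n}\big(\mathrm{im}(\dd^{(i+1)}_{x_n})\big)=0$, i.e. $\mathrm{im}(\dd^{(i+1)}_{x_n})\subseteq\ker(\dd^{(i)}_{x_n})$. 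Now pass to the limit: if $w\in V$, choose $w_n\in\mathrm{im}(\dd^{(i+1)}_{x_n})$ with $w_n\to w$ (possible because convergence in the Grassmannian of fixed-codimension subspaces means convergence of any chosen bases, so every vector of the limit subspace is a limit of vectors of the approximating subspaces), and then $\dd^{(i)}_{x_n}w_n=0$ for all $n$ while $\dd^{(i)}_{x_n}w_n\to\dd^{(i)}_x w$ by continuity of the bundle map $\dd^{(i)}$; therefore $\dd^{(i)}_x w=0$, i.e. $w\in\ker(\dd^{(i)}_x)$. This gives $V\subseteq\ker(\dd^{(i)}_x)$.

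Finally, for $x\in M_{\mathrm{reg}^i,\mathfrak F}$, Proposition~\ref{prop:bi}(3) already gives that $\pi_i^{-1}(x)$ is the single point $\mathrm{im}(\dd^{(i+1)}_x)$, and on this locus $\mathrm{im}(\dd^{(i+1)}_x)=\ker(\dd^{(i)}_x)$ by the characterization above, so the chain of inclusions collapses to equalities; alternatively one notes the outer terms $\mathrm{im}(\dd^{(i+1)}_x)$ and $\ker(\dd^{(i)}_x)$ coincide there and are both of the correct codimension $r_i$, forcing $V$ to equal both. The only genuinely delicate point is the topological lemma used twice, namely that convergence $\mathrm{im}(\dd^{(i+1)}_{x_n})\to V$ in $\mathrm{Gr}_{-r_i}(E_{-i})$ allows one both to push a fixed vector of $\mathrm{im}(\dd^{(i+1)}_x)$ into $V$ and to realize every vector of $V$ as a limit of vectors in the approximating subspaces; this is standard for Grassmannians (e.g. via local trivializations in which a codimension-$r_i$ subspace is the graph of a linear map and convergence in the Grassmannian is convergence of that map, or via the Plücker embedding), and I would just cite Appendix~\ref{appendix} and Proposition~\ref{prop:bi} rather than reprove it. Everything else is continuity of the vector bundle morphisms $\dd^{(i)}$, $\dd^{(i+1)}$ together with the equality $\mathrm{im}(\dd^{(i+1)})=\ker(\dd^{(i)})$ on the regular locus.
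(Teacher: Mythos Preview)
Your proof is correct and follows essentially the same route as the paper's: invoke Proposition~\ref{prop:bi}(2) for $\mathrm{im}(\dd^{(i+1)}_x)\subseteq V$, then for $V\subseteq\ker(\dd^{(i)}_x)$ pick $v_n\in\mathrm{im}(\dd^{(i+1)}_{x_n})$ converging to a given $v\in V$ and use $\dd^{(i)}_{x_n}v_n=0$ together with continuity of $\dd^{(i)}$. The only remark is that for the second inclusion you do not actually need the full equality $\mathrm{im}(\dd^{(i+1)})=\ker(\dd^{(i)})$ on the regular locus---the mere complex property $\dd^{(i)}\circ\dd^{(i+1)}=0$ suffices---though the paper writes the equality as well, so this is a cosmetic point rather than a gap.
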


Let us now equip the geometric resolution with an universal Lie $ \infty$-structure whose bracket, that we denote by $(\ell_k)_{k \geq 2}$, restrict to $k $-linear maps on $ \oplus_{i \geq 2}  E_{-i} + {\mathrm{ker}}(\rho)$ that we denote by $(\{\cdots \}_k)_{k \geq 1} $. 
 
 \begin{proposition}\label{thm:grass} Fix a geometric resolution $(E, \dd, \rho)$ of $\mathfrak F$ and a universal Lie $\infty$-algebroid $(E,(\ell_k)_{k\geq 1}, 
 \rho)$ of $\mathfrak
 F$. The following are satisfied:
\begin{enumerate}
\item For every $x\in M$ and $V\in \pi_1^{-1}(x)$, the $2$-ary bracket $\{\cdot\,,\cdot\,\}_2$ on $\ker \rho_x$ restricts to $V$.
\item For all $x\in M$, and $\left(V_{1}\subset E_{-1}|_x,\ldots, V_k\subset E_{-k}|_x,\ldots\right)\in \pi^{-1}_\infty(x)$, we have $\{V_i,V_j\}_2\subset V_{i+j-1}$ for every $i, j\in \mathbb N_0$.
   
\end{enumerate}

\end{proposition}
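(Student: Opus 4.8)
The strategy is to prove both assertions simultaneously by a limiting (continuity) argument, using the fact that on the regular locus the subspaces $V_i$ are literally the images of the differentials, on which the bracket relations are easy to verify, and that bracket relations are closed conditions, hence survive passage to the limit. First I would observe that for $x \in M_{\mathrm{reg},\mathfrak F}$ and $i,j \geq 1$ we have $V_i = \mathrm{im}(\dd^{(i+1)}_x)$, $V_j = \mathrm{im}(\dd^{(j+1)}_x)$ by Proposition \ref{thm:Transformation}, so the claim $\{V_i,V_j\}_2 \subseteq V_{i+j-1}$ on the regular locus reduces to showing $\{\mathrm{im}(\dd^{(i+1)}_x),\mathrm{im}(\dd^{(j+1)}_x)\}_2 \subseteq \mathrm{im}(\dd^{(i+j)}_x)$. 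This in turn follows from the homotopy Leibniz / derivation identities of the universal Lie $\infty$-algebroid: $\ell_1$ is a degree $+1$ derivation of $\ell_2$, so for sections $a \in \Gamma(E_{-i})$, $b \in \Gamma(E_{-j})$ one has $\ell_2(\ell_1 a, \ell_1 b) = \pm\ell_1(\ell_2(\ell_1 a, b)) \pm \ell_1 \ell_2(a, \ell_1 b)$ up to signs (since $\ell_1\ell_1 = 0$), which lands in $\mathrm{im}(\ell_1) = \mathrm{im}(\dd^{(i+j)})$ on $E_{-i-j+1}$; evaluating at $x$ and noting that $\ell_2$ is $\mathcal O$-bilinear in these degrees (the anchor-term only appears for $E_{-1}$), we get the claimed inclusion pointwise. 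Here one must take care of the degree-$1$ case separately: for $V_1 \subseteq \ker\rho_x \subseteq E_{-1}|_x$ the $2$-bracket $\{\cdot,\cdot\}_2$ is the restriction of $\ell_2$ to $\ker\rho$, and the Leibniz anchor term $\rho(a)[f]b$ vanishes when $a \in \ker\rho_x$, so $\{\cdot,\cdot\}_2$ is still well-defined $\mathbb K$-bilinearly on the fibre — this is exactly item \ref{item:infty-isotropy} of the preliminaries.

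Next, for item (1): fix $x \in M$ and $V \in \pi_1^{-1}(x)$. By definition of $\widetilde M_1$ there is a sequence $(x_n) \in M_{\mathrm{reg},\mathfrak F}^{\mathbb N}$ with $x_n \to x$ and $\mathrm{im}(\dd^{(2)}_{x_n}) \to V$ in $\mathrm{Gr}_{-r_1}(E_{-1})$. I want to show $\{V,V\}_2 \subseteq V$, i.e. that for $u,v \in V$ the element $\{u,v\}_2$ — computed via any local sections through $u,v$ — lies in $V$. Pick $u_n, v_n \in \mathrm{im}(\dd^{(2)}_{x_n})$ with $u_n \to u$, $v_n \to v$ (possible by convergence in the Grassmannian: choose a local continuous frame of the tautological bundle along the convergent sequence). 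On the regular locus we already know $\{u_n, v_n\}_2 \in \mathrm{im}(\dd^{(2)}_{x_n}) = V_{x_n}$. Now I choose local sections $\widetilde u, \widetilde v \in \Gamma(E_{-1})$ with $\widetilde u(x) = u$, $\widetilde v(x) = v$; by continuity of $\ell_2$ and of the evaluation, $\ell_2(\widetilde u, \widetilde v)(x_n) \to \ell_2(\widetilde u, \widetilde v)(x) = \{u,v\}_2$, while $\ell_2(\widetilde u, \widetilde v)(x_n)$ differs from $\{u_n, v_n\}_2$ by terms going to zero (the difference comes from $\widetilde u(x_n) - u_n \to 0$, $\widetilde v(x_n) - v_n \to 0$, and the anchor correction term which is controlled since $u_n, v_n$ approach $\ker\rho_x$). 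Since $\{u_n, v_n\}_2 \in V_{x_n}$ and $V_{x_n} \to V$ in the Grassmannian, and the condition "$w \in W$" is closed on pairs $(w, W)$ in the total space of the tautological bundle over the Grassmannian, the limit satisfies $\{u,v\}_2 \in V$. Polarizing gives the full bilinear statement on $V$.

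Finally, item (2) is the same continuity argument carried out degree by degree: for $(V_1, \ldots, V_k, \ldots) \in \pi_\infty^{-1}(x)$ there is a \emph{single} sequence $(x_n) \in M_{\mathrm{reg},\mathfrak F}^{\mathbb N}$ with $x_n \to x$ and $\mathrm{im}(\dd^{(i+1)}_{x_n}) \to V_i$ for every $i$ simultaneously (this is precisely the point of defining $\widetilde M_\infty$ via the section $\sigma_\infty$, emphasized in the remark before the Proposition). Fix $i, j$; pick $u \in V_i$, $v \in V_j$ and approximating $u_n \in \mathrm{im}(\dd^{(i+1)}_{x_n})$, $v_n \in \mathrm{im}(\dd^{(j+1)}_{x_n})$. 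By the regular-locus computation above, $\{u_n, v_n\}_2 \in \mathrm{im}(\dd^{(i+j)}_{x_n})$, which converges to $V_{i+j-1}$; and $\{u_n, v_n\}_2$ converges to $\{u,v\}_2$ (again using continuity of $\ell_2$, $\mathcal O$-bilinearity in these degrees, and that the evaluation of a fixed pair of local sections is continuous). Closedness of containment in the tautological bundle over $\prod_i \mathrm{Gr}_{-r_i}(E_{-i})$ then yields $\{u,v\}_2 \in V_{i+j-1}$, and bilinearity gives $\{V_i, V_j\}_2 \subseteq V_{i+j-1}$.

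\textbf{Main obstacle.} The delicate point is the bookkeeping in the approximation step: $\{\cdot,\cdot\}_2$ is genuinely defined on \emph{sections}, not on vectors, except in the degrees where it is $\mathcal O$-bilinear, and in degree $1$ it is only $\mathbb K$-bilinear on $\ker\rho$ (not on all of $E_{-1}$). One must therefore verify that $\ell_2(\widetilde u, \widetilde v)(x_n)$, for fixed local sections $\widetilde u, \widetilde v$, stays close to $\{u_n, v_n\}_2$ even though $u_n, v_n$ are only approaching — not lying in — $\ker\rho_x$; the anchor term $\rho(\widetilde u)[f]\widetilde v$ is the thing to control, and it does go to zero because $\rho(u_n) \to \rho(u) = 0$. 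Making this estimate clean (rather than hand-wavy) is the real content; everything else is the standard "a closed condition passes to limits in a compact Grassmannian" argument already used in the proof of Proposition \ref{prop:bi}.
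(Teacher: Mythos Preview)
Your approach is essentially the same as the paper's: establish the inclusion on regular points via the derivation identity $\ell_1(\ell_2(\ell_1 a,b)) = \pm \ell_2(\ell_1 a,\ell_1 b)$, then pass to the limit along a sequence $x_n\to x$ of regular points. The paper's execution differs from yours only in how it handles exactly the ``main obstacle'' you flag: instead of fixing sections $\widetilde u,\widetilde v$ through $u,v$ and arguing that $\ell_2(\widetilde u,\widetilde v)(x_n)-\{u_n,v_n\}_2\to 0$, the paper chooses a local frame $e_1^{(i)},\dots,e_{q_i+r_i}^{(i)}$ of $E_{-i}$, writes $u_n=\sum_k \alpha_n^k e_k^{(i)}(x_n)$, and computes $\{u_n,v_n\}_2=\sum_{k,l,s}\alpha_n^k\beta_n^l\,c_{kl}^{ij,s}(x_n)\,e_s^{(i+j-1)}(x_n)$ directly via the structure functions $c_{kl}^{ij,s}$ of $\ell_2$ in that frame. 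This makes both well-definedness (no anchor ambiguity, since the coefficients are scalars) and continuity (the expression is visibly continuous in $(x_n,\alpha_n,\beta_n)$) immediate, bypassing the bookkeeping you worry about. Your version can be made rigorous, but the comparison $\ell_2(\widetilde u,\widetilde v)(x_n)\approx\{u_n,v_n\}_2$ is awkward precisely because $\widetilde u(x_n)\neq u_n$; the frame trick is the clean fix.
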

In particular, these two items have obvious consequences. 
Recall that for every $x \in M$, $H^{-1}(\mathfrak{F},x)\simeq \mathfrak{g}_x$ is the isotropy Lie algebra, and that $\oplus_{i\geq 1}H^{(-i)}(\mathfrak F,x)$ comes with a canonical natural graded Lie algebra structure (see discussion of Section \ref{sec:universal} (\ref{item:infty-isotropy})).

\begin{corollary}
 Under the assumption of Proposition \ref{thm:grass}, or every $x\in M$ and 
 \begin{enumerate}
     \item $V\in \pi_1^{-1}(x)$,  the image of  $V$ in  $H^{-1}(\mathfrak{F},x)\simeq \mathfrak{g}_x$, is a Lie subalgebra of codimension $r-\dim(L_x)$, where $\dim (L_x)$ is the dimension the leaf through $x$.  

     \item The image of $(V_1, \ldots, V_k,\ldots)\in \pi_\infty^{-1}(x)$ in $\oplus_{i\geq 1}H^{(-i)}(\mathfrak F,x)$ is a graded Lie subalgebra.
 \end{enumerate}
\end{corollary}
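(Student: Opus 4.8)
The plan is to read off both assertions directly from Propositions \ref{thm:Transformation} and \ref{thm:grass}: the containments in the former are exactly what makes the images in cohomology well defined, and the bracket constraints in the latter are what forces these images to be subalgebras. The only genuine computation is the codimension in item~(1).

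For item~(1), I would fix $V\in\pi_1^{-1}(x)$ and apply Proposition \ref{thm:Transformation} with $i=1$ to get $\mathrm{im}(\dd^{(2)}_x)\subseteq V\subseteq\ker(\rho_x)$. This makes the image $\overline V:=V/\mathrm{im}(\dd^{(2)}_x)$ a genuine subspace of $H^{-1}(\mathfrak F,x)=\ker(\rho_x)/\mathrm{im}(\dd^{(2)}_x)$. Next, Proposition \ref{thm:grass}(1) says that $\{\cdot\,,\cdot\}_2$ restricts to $V$; since this bracket descends to the Lie bracket on $H^{-1}(\mathfrak F,x)$ (Section \ref{sec:universal}(\ref{item:infty-isotropy})), $\overline V$ is closed under it, hence a Lie subalgebra, and via the Lie-algebra isomorphism $H^{-1}(\mathfrak F,x)\simeq\mathfrak g_x$ of Lemma \ref{lemma:M_reg}(a) we land inside $\mathfrak g_x$. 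For the codimension I would argue purely dimensionally: since $\overline V$ and $H^{-1}(\mathfrak F,x)$ are the quotients of $V$ and of $\ker(\rho_x)$ by the \emph{same} subspace $\mathrm{im}(\dd^{(2)}_x)$, the codimension equals $\dim\ker(\rho_x)-\dim V$. Now $\dim\ker(\rho_x)=\mathrm{rk}(E_{-1})-\dim(L_x)$ because $\mathrm{im}(\rho_x)=T_xL_x$, while $\dim V=\mathrm{rk}(E_{-1})-r_1$ with $r_1=r$ by Lemma \ref{lemma:M_reg}(d) (the defining sum is empty at $i=1$); subtracting gives $r-\dim(L_x)$.

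For item~(2), I would use that $(V_1,\dots,V_k,\dots)$ maps to $V_i\in\pi_i^{-1}(x)$ under the natural map $\widetilde M_\infty\to\widetilde M_i$, so Proposition \ref{thm:Transformation} yields $\mathrm{im}(\dd^{(i+1)}_x)\subseteq V_i\subseteq\ker(\dd^{(i)}_x)$ for all $i\geq1$, whence each $\overline V_i:=V_i/\mathrm{im}(\dd^{(i+1)}_x)$ is a well-defined subspace of $H^{-i}(\mathfrak F,x)$. Proposition \ref{thm:grass}(2) gives $\{V_i,V_j\}_2\subseteq V_{i+j-1}$, which descends to $\{\overline V_i,\overline V_j\}_2\subseteq\overline V_{i+j-1}$; hence $\bigoplus_{i\geq1}\overline V_i$ is a graded subspace of $\bigoplus_{i\geq1}H^{-i}(\mathfrak F,x)$ closed under $\{\cdot\,,\cdot\}_2$, i.e.\ a graded Lie subalgebra.

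The main (and essentially only) obstacle is bookkeeping rather than mathematics: one must check that the quotient defining $\overline V$ is taken with respect to exactly the subspace $\mathrm{im}(\dd^{(2)}_x)$ that also defines $H^{-1}(\mathfrak F,x)$ — which is precisely the content of the inclusions in Proposition \ref{thm:Transformation} — and one must correctly extract $r_1=r$ from the formula in Lemma \ref{lemma:M_reg}(d) and use $\mathrm{im}(\rho_x)=T_xL_x$ for the dimension count; the analogous remarks for item~(2) are immediate.
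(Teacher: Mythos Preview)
Your proposal is correct and follows precisely the route the paper intends: the paper states this corollary as an ``obvious consequence'' of Propositions \ref{thm:Transformation} and \ref{thm:grass} without writing out a proof, and your argument supplies exactly the details one would fill in --- the inclusions from Proposition \ref{thm:Transformation} to define the images in cohomology, the bracket closure from Proposition \ref{thm:grass}, and the dimension count via Lemma \ref{lemma:M_reg}(d) to get $r_1=r$.
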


\begin{remark}
    The $3$-ary bracket $\{\cdot\,,\cdot\,,\cdot\}_3$ does not restrict to elements of $\pi_\infty^{-1}(x)$ for $x\in M$.
\end{remark}
The corollary below is a direct consequence of Proposition \ref{thm:Transformation}, and is another manner to state that $M_i$ does not depend on the geometric resolution.
\begin{corollary}\label{cor:independence}
There are inclusions
\begin{equation}
    \widetilde M_i\hookrightarrow \coprod_{x\in M}\mathrm{Gr}_{-\left(r_i-\mathrm{rk}\left(\dd^{(i)}_x\right)\right)}(H^{-i}(\mathfrak{F},{x}))\quad\text{and}\quad\widetilde M_\infty\hookrightarrow \coprod_{x\in M}\prod_{ i\geq 1}\mathrm{Gr}_{-\left(r_i-\mathrm{rk}\left(\dd^{(i)}_x\right)\right)}(H^{-i}(\mathfrak{F},{x})).  
\end{equation}
\end{corollary}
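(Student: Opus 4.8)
The plan is to reduce Corollary~\ref{cor:independence} to the containments already proved in Proposition~\ref{thm:Transformation} together with the definition of the $r_i$. Fix a geometric resolution $(E,\dd,\rho)$ and a point $x\in M$. First I would recall from Proposition~\ref{thm:Transformation} that any $V\in\pi_i^{-1}(x)$ satisfies $\mathrm{im}(\dd^{(i+1)}_x)\subseteq V\subseteq\ker(\dd^{(i)}_x)$, so $V$ determines (and is determined by) the subspace $V/\mathrm{im}(\dd^{(i+1)}_x)$ of the cohomology space $H^{-i}(\mathfrak F,x)=\ker(\dd^{(i)}_x)/\mathrm{im}(\dd^{(i+1)}_x)$. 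This gives a well-defined injective map $\widetilde M_i\cap\Pi_i^{-1}(x)\hookrightarrow\mathrm{Gr}(H^{-i}(\mathfrak F,x))$, and letting $x$ vary yields the claimed inclusion into the disjoint union over $M$; the same argument applied componentwise, using Proposition~\ref{thm:grass}(2) only implicitly (we need no bracket compatibility here, just the nested inclusions for each $i$), produces the inclusion for $\widetilde M_\infty$ into the product of Grassmannians.

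The second step is bookkeeping on dimensions, to identify the codimension $r_i-\mathrm{rk}(\dd^{(i)}_x)$ appearing in the statement. Recall $V$ has codimension $r_i$ in $E_{-i}$ by construction of $\widetilde M_i$ (here $r_i$ is the generic value from Lemma~\ref{lemma:M_reg}(d)). Since $\mathrm{im}(\dd^{(i+1)}_x)\subseteq V\subseteq\ker(\dd^{(i)}_x)$, the codimension of $V/\mathrm{im}(\dd^{(i+1)}_x)$ inside $H^{-i}(\mathfrak F,x)$ equals the codimension of $V$ inside $\ker(\dd^{(i)}_x)$, which is $\dim E_{-i}-\dim\ker(\dd^{(i)}_x)-r_i=\mathrm{rk}(\dd^{(i)}_x)-r_i$. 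A sign convention: in the paper's notation $\mathrm{Gr}_{-q}$ denotes subspaces of codimension $q$, so the relevant label is $-(r_i-\mathrm{rk}(\dd^{(i)}_x))$ exactly as written — one should double-check the sign here, since $\mathrm{rk}(\dd^{(i)}_x)\le r_i$ would be needed for this to be a genuine nonnegative codimension, and at regular points $\mathrm{rk}(\dd^{(i)}_x)$ attains a value making $r_i-\mathrm{rk}(\dd^{(i)}_x)$ the correct ``drop''. I would verify this numerology against Lemma~\ref{lemma:M_reg}(d) directly.

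The third step is to argue that the inclusion is the announced restatement of independence from the geometric resolution. Since $H^{-i}(\mathfrak F,x)$ is intrinsic to $\mathfrak F$ (it does not depend on the chosen geometric resolution, by Section~\ref{sec:universal}(\ref{item:infty-isotropy})), the target $\coprod_x\mathrm{Gr}(H^{-i}(\mathfrak F,x))$ is intrinsic; combined with Proposition~\ref{thm:Transformation} this shows the embedded copy of $\widetilde M_i$ sits inside an intrinsic object, which together with Theorem~\ref{thm:independence} (or Remark~\ref{rmk:invariance}) gives the canonical identification. For $\widetilde M_\infty$ one takes the product over $i\ge1$ of these embeddings.

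The main obstacle I anticipate is purely the index/codimension arithmetic: getting the formula $r_i-\mathrm{rk}(\dd^{(i)}_x)$ with the right sign convention for $\mathrm{Gr}_{-q}$, and checking it is consistent at both regular and singular points (in particular that the quantity is nonnegative and that at a regular point it is zero, recovering $\pi_i^{-1}(x)$ as a single point as in Proposition~\ref{thm:Transformation}). Everything else is a formal consequence of the nested inclusion $\mathrm{im}(\dd^{(i+1)}_x)\subseteq V\subseteq\ker(\dd^{(i)}_x)$ and the intrinsic nature of $H^\bullet(\mathfrak F,x)$.
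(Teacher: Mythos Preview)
Your approach is essentially the same as the paper's: invoke Proposition~\ref{thm:Transformation} to get $\mathrm{im}(\dd^{(i+1)}_x)\subseteq V\subseteq\ker(\dd^{(i)}_x)$, pass to the quotient by $\mathrm{im}(\dd^{(i+1)}_x)$ to land in $H^{-i}(\mathfrak F,x)$, and compute the codimension. The paper's proof is just two sentences doing exactly this.

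Two small remarks. First, your codimension arithmetic has a sign slip: the codimension of $V$ in $\ker(\dd^{(i)}_x)$ is $\dim\ker(\dd^{(i)}_x)-\dim V = \dim\ker(\dd^{(i)}_x)-(\dim E_{-i}-r_i)=r_i-\mathrm{rk}(\dd^{(i)}_x)$, not $\mathrm{rk}(\dd^{(i)}_x)-r_i$; you already flagged this as needing a check, and the correct sign matches the statement. Second, your third step (independence from the resolution) is not part of this corollary's proof in the paper --- the corollary only asserts the inclusion, and the intrinsic nature of the target is used separately in the proof of Theorem~\ref{thm:independence}.
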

\begin{proof}
Let $x\in M$ and $i\geq 1$.
By Proposition \ref{thm:Transformation}, elements $V\in \pi_i^{-1}(x)$ satisfy the inclusions, $\mathrm{im}(\dd^{(i+1)}_{x})\subseteq V \subseteq \ker(\dd^{(i)}_x)$, they correspond injectively to a (unique) sub-vector space of codimension $r_i-\mathrm{rk}(\dd^{(i)})$ in $H^{-i}(\mathfrak{F},{x})$. In particular, this implies the existence
of an inclusion $\pi_i^{-1}(x)\hookrightarrow \mathrm{Gr}_{-\left(r_i-\mathrm{rk}(\dd^{(i)})\right)}(H^{-i}(\mathfrak{F},{x}))$.
\end{proof}

We denote by $\mathrm{GrLie}_{-\left(r-\dim(L_x)\right)}(\mathfrak{g}_x)$ the sub-Grassmannian of Lie subalgebras of $\mathfrak{g}_x$ of codimension $r-\dim(L_x)$.

\begin{corollary}\label{cor:Moshen}
    The image of the inclusion  $\displaystyle{\widetilde{M}_1\hookrightarrow \sqcup_{x\in M} \mathrm{GrLie}_{-\left(r-\dim(L_x)\right)}(\mathfrak{g}_x)}$ is  the blowup space of Omar Moshen \cite{MohsenOmar}. 
\end{corollary}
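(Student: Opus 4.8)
The plan is to compare our construction of $\widetilde M_1$ with Mohsen's definition fiberwise over each point $x \in M$, using the identification provided by Corollary \ref{cor:independence} together with the Lie-algebraic refinement of Proposition \ref{thm:grass}(1). First I would recall precisely how Mohsen builds his blowup space in \cite{MohsenOmar}: over the regular part $M_{\mathrm{reg},\mathfrak F}$ it is the graph of the Gauss map $x \mapsto T_x L_x \subset T_x M$ (equivalently, since $\mathfrak F$ restricted to $M_{\mathrm{reg},\mathfrak F}$ is a subbundle of $TM$ of rank $r$, the map into the Grassmann bundle $\mathrm{Gr}_{-(\dim M - r)}(TM)$ sending $x$ to $T_x\mathfrak F$), and over a general point it is the closure of this graph. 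The key observation is that this is literally our section $\sigma_0$ and our space $\widetilde M_0$; by Lemma \ref{lemma:M_reg}(b) we have $M_{\mathrm{reg}^1,\mathfrak F} = M_{\mathrm{reg},\mathfrak F}$, and I would first explain how the datum of $\mathrm{im}(\dd^{(2)}_x) \subseteq \ker(\rho_x)$ recorded by $\widetilde M_1$ refines the datum of $\mathrm{im}(\rho_x) = T_x\mathfrak F$ recorded by $\widetilde M_0$.

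The heart of the matter is the translation into isotropy Lie algebras. By Corollary \ref{cor:independence}, an element $V \in \pi_1^{-1}(x)$ corresponds injectively to a subspace of codimension $r - \dim(L_x)$ in $H^{-1}(\mathfrak F,x) \simeq \mathfrak g_x$, and by Proposition \ref{thm:grass}(1) the $2$-ary bracket $\{\cdot,\cdot\}_2$ on $\ker\rho_x$ restricts to $V$; since $V \supseteq \mathrm{im}(\dd^{(2)}_x)$ and the induced bracket on $H^{-1}(\mathfrak F,x)$ is the Lie bracket of $\mathfrak g_x$, the image $\overline V$ of $V$ is a Lie subalgebra of $\mathfrak g_x$ — this is exactly Corollary \ref{cor:Moshen}'s first cited corollary. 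So the map $V \mapsto \overline V$ realizes $\widetilde M_1$ as a subspace of $\sqcup_{x\in M}\mathrm{GrLie}_{-(r-\dim(L_x))}(\mathfrak g_x)$. I would then verify that this map is injective (which follows from the codimension/containment squeeze $\mathrm{im}(\dd^{(2)}_x)\subseteq V\subseteq\ker\rho_x$ in Proposition \ref{thm:Transformation}, so $V$ is recovered from $\overline V$) and that the image is precisely Mohsen's space. For the latter, I would argue that both spaces are, by construction, the closure of the same locus over $M_{\mathrm{reg},\mathfrak F}$: over a regular point $\mathfrak g_x = 0$, the only subalgebra of codimension $r - \dim(L_x) = r - r = 0$ is $\{0\}$ itself, so the graph over $M_{\mathrm{reg},\mathfrak F}$ is canonically $M_{\mathrm{reg},\mathfrak F}$ in both pictures, and taking closures inside the (proper over $M$) bundle of subalgebra-Grassmannians gives the same compact space on both sides. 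The compatibility of the closures uses that the embedding $\widetilde M_1 \hookrightarrow \sqcup_x \mathrm{GrLie}(\mathfrak g_x)$ is continuous and proper over $M$ (inherited from properness of the Grassmann bundle projection, Proposition \ref{prop:bi}(1)), so the closure of the graph computed upstairs in $\mathrm{Gr}_{-r_1}(E_{-1})$ maps homeomorphically onto the closure of the graph computed in $\sqcup_x \mathrm{GrLie}(\mathfrak g_x)$.

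The main obstacle I anticipate is matching the precise technical form of Mohsen's construction in \cite{MohsenOmar} — in particular, checking that what he calls the blowup is indeed the \emph{closure} of the Gauss-type graph inside a bundle of Lie-subalgebra Grassmannians (as opposed to, say, a scheme-theoretic or groupoid-theoretic incarnation), and that his ``limiting tangent data'' at a singular point is encoded by exactly the subspace $\overline V \subset \mathfrak g_x$ and nothing more. A secondary subtlety is that the sub-Grassmannian $\mathrm{GrLie}_{-(r-\dim(L_x))}(\mathfrak g_x)$ is a closed subvariety of the ordinary Grassmannian $\mathrm{Gr}(\mathfrak g_x)$ but its dimension jumps as $x$ varies, so ``the bundle $\sqcup_x \mathrm{GrLie}_{-(r-\dim(L_x))}(\mathfrak g_x)$'' is not literally a fiber bundle; I would handle this by working inside the honest Grassmann bundle $\mathrm{Gr}_{-r_1}(E_{-1}) \to M$ throughout — which \emph{is} proper — and only \emph{afterward} observing (via Proposition \ref{thm:grass}(1)) that all limit points land in the subalgebra locus, so that no completeness issue arises when comparing with Mohsen's space. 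Once these identifications are pinned down, the equality of the two blowups is formal.
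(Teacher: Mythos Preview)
Your proposal has a genuine gap: you never correctly identify what Mohsen's blowup actually is, and the closure argument you sketch cannot close that gap. Mohsen's fiber over $x$ is \emph{not} built from the Gauss map $x\mapsto T_xL_x\subset T_xM$; that is the Sert\"oz/Nash picture underlying $\widetilde M_0$, and the paper already records that $\widetilde M_0\neq\widetilde M_1$ in general (Examples~\ref{ex1}--\ref{ex2}). Mohsen's construction is intrinsic to $\mathfrak F$: he fixes $x$, works in the finite-dimensional vector space $\mathfrak F/\mathcal I_x\mathfrak F$, and for nearby regular $y$ considers the kernel of the evaluation map $\phi_y\colon \mathfrak F/\mathcal I_x\mathfrak F\to T_y\mathfrak F$; the fiber $\mathrm{blup}(\mathfrak F)_x$ is the set of limits of these kernels in $\mathrm{Gr}_{-r}\bigl(\mathfrak F/\mathcal I_x\mathfrak F\bigr)$. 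Your abstract closure argument (``both are closures of the same graph over $M_{\mathrm{reg},\mathfrak F}$'') fails because Mohsen's ambient Grassmannian depends on $x$ (it is built from $\mathfrak F/\mathcal I_x\mathfrak F$, whose dimension jumps), so there is no single proper fiber bundle over $M$ in which to compare closures --- you yourself flag this, but your proposed fix of ``working inside $\mathrm{Gr}_{-r_1}(E_{-1})$ throughout'' is precisely what needs to be justified, and that justification \emph{is} the proof.

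The paper's argument supplies the missing identification directly and pointwise. Fix $x$ and choose a geometric resolution that is \emph{minimal at $x$}, so that a local frame $e_1,\dots,e_d$ of $E_{-1}$ maps under $\rho$ to a minimal generating set $X_1,\dots,X_d$ of $\mathfrak F$ near $x$. Minimality gives an isomorphism $\overline\rho_x\colon E_{-1}|_x\xrightarrow{\ \sim\ }\mathfrak F/\mathcal I_x\mathfrak F$, and the trivialization $\kappa_y\colon E_{-1}|_x\to E_{-1}|_y$ makes the square with $\phi_y$ and $\rho_y$ commute. Hence $\phi_y^{-1}(0)$ corresponds to $\ker\rho_y$ under $\overline\rho_x$, and the sets of limits coincide: $\mathrm{blup}(\mathfrak F)_x\simeq\pi_1^{-1}(x)$. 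This is the step your plan is missing; once you have it, the passage to $\mathfrak g_x$ via Proposition~\ref{thm:Transformation} and Proposition~\ref{thm:grass}(1) is exactly as you describe.
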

\begin{proof}

Let $\mathfrak F$ be a singular foliation that admits a geometrical resolution $(E, \dd, \rho)$. For every $x\in M$, the fiber $\mathrm{blup}(\mathfrak F)_x$ of \cite{MohsenOmar} is constructed out of minimal generators $X_1, \ldots, X_d$ of $\mathfrak F$ in a neighborhood of $x$ as follows: for $y\in M_{\mathrm{reg}, \mathfrak F}$, let $\phi_y$ be the surjective linear map  defined by
\begin{equation}
    \phi_y\colon  \frac{\mathfrak F}{\mathcal I_x\mathfrak F} \longrightarrow T_y\mathfrak F,\; \phi_y ([X_i]_x) = X_i(y),\quad\text{for all}\quad i \in \{1,\ldots, d\},
\end{equation}
where $T_y\mathfrak F$ is the image of the evaluation map $e_y\colon \mathfrak F\longrightarrow T_yM$ at $y$. By definition, $\mathrm{blup}(\mathfrak F)_x$ is made of subspaces $V \subseteq\frac{\mathfrak F}{\mathcal I_x\mathfrak F}$ such that there
exists a sequence $x_n\in  M_{\mathrm{reg}, \mathfrak F}$ such that
\begin{equation}
    x_n \longrightarrow x,\, \phi_{x_n}^{-1}(0)\longrightarrow V\in \mathrm{Gr}_{-r}\left(\frac{\mathfrak F}{\mathcal I_x\mathfrak F}\right).
\end{equation}
We claim that for every $x\in M$, $\mathrm{blup}(\mathfrak F)_x\simeq \pi_1^{-1}(x)$. Indeed, we can assume that $(E, \dd, \rho)$ is a minimal geometric resolution at $x$ such that $\rho(e_i)=X_i$ for $i=1, \ldots,d$, where $(e_i)_{i=1, \ldots, d}$ is a local frame of $E_{-1}$. Since $\frac{\Gamma(E_{-1})}{\mathcal{I}_{x'}\Gamma(E_{-1})}\simeq {E_{-1}}|_{x'}$ for all $x'\in M$, the anchor map defines an isomorphism $\overline{\rho}_x\colon {E_{-1}|_x}\longrightarrow \frac{\mathfrak F}{\mathcal I_x\mathfrak F}$ such that the diagram 
\begin{equation}  
\xymatrix{{E_{-1}|_x}\ar[r]_{\simeq}^{\overline{\rho}_x}\ar[d]^{\rotatebox[origin=c]{90}{$\simeq$}}_{\kappa_y} &\frac{\mathfrak F}{\mathcal I_x\mathfrak F}\ar[d]^{\phi_y}\\{E_{-1}|_y}\ar[r]_{{\rho_y}} & T_y\mathfrak F}
\end{equation}
commutes. The claim follows.

\end{proof}

\subsubsection{Lift of the singular foliation $\mathfrak F$ to the blowup spaces $\left(\widetilde M_i\right)_{0\leq i\leq\infty}$}
Assume now that $\mathfrak{F}$ is a singular foliation and that Equation \eqref{eq:resolutions2} is a geometric resolution of $\mathfrak{F}$ of finite length. Notice that the fiber product $\bigtimes^{ i\geq 1}_M\mathrm{Gr}_{-r_i}({E_{-i}})$ is finite since $(E,\dd, \rho)$ is a geometric resolution of finite length. Hence, $\bigtimes^{ i\geq 1}_M\mathrm{Gr}_{-r_i}({E_{-i}})$ is a smooth manifold.
\begin{definition}
  Let $i\geq 0$. We say that $X\in \mathfrak F$ \emph{lifts} to  $\widetilde M_i\subset \mathrm{Gr}_{-r_i}(E_{-i})$, or $\widetilde M_\infty$, if there exists a vector field $\widetilde X\in \mathfrak X\left(\mathrm{Gr}_{-r_i}(E_{-i})\right)$ or $\mathfrak X\left(\bigtimes^{i\geq 1}_M\mathrm{Gr}_{-r_i}({E_{-i}})\right)$,  projectable to $X$ and  tangent to $\widetilde M_i$ in the sense of Section \ref{Nagano}(\ref{def:tangent}).  We denote by $\widetilde X_i$ or $\widetilde X_\infty$ the restriction of $\widetilde X$ to $\widetilde{M}_i$ or $\widetilde X_\infty$ respectively.

  We say that a $\mathfrak F$ \emph{lifts} to $\widetilde M_i$ if every vector field $X\in \mathfrak F$ lifts to $\widetilde M_i$.
\begin{remark}
$\widetilde X_i$ on $\pi_i^{-1}(M_{\mathrm{reg}^{i},\mathfrak F})$ is tangent in the usual sense to the submanifold and projects to $X$ through $\pi_i$. In particular, if a lift exists, {its restriction to $\pi^{-1}_i(M_{\mathrm{reg}^{i},\mathfrak F})$ is unique because $\pi_i\colon \pi_i^{-1}(M_{\mathrm{reg}^{i},\mathfrak F})\overset{\sim}{\longrightarrow} M_{\mathrm{reg}^{i},\mathfrak F}$. Since the other points of $\widetilde M_i$  are limits of elements of $\pi_i^{-1}(M_{\mathrm{reg}^{i},\mathfrak F})$, thus its restriction to $\widetilde M_i$ is unique}.      
\end{remark}
\end{definition}
\begin{theorem}\label{thm:trans2} Let $\mathfrak F$ be a singular foliation on $M$ that admits a geometric resolution. For every $i\geq 0$, the following items hold:
     \begin{enumerate}
         \item Every vector field $X\in \mathfrak{F}$ lifts to a unique vector field  $\widetilde X_i$ on $\widetilde M_i$,
         \item the map $X\in \mathfrak F \longrightarrow \widetilde X_i\in \mathfrak X(\widetilde M_i)$ does not depend on any choices. {In particular, it is a Lie algebra morphism.}
         \item The module $\widetilde{\mathfrak F}_i$ over functions on $\widetilde M_i$ generated by the $\widetilde X_i's$ for $X\in \mathfrak F$, is a singular foliation.
     \end{enumerate}
     The same holds for $\widetilde M_\infty$.
\end{theorem}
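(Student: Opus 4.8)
The plan is to prove the three items together by reducing everything to a local model near an arbitrary point $x\in M$, and then to the already-established transversality statement of Proposition \ref{thm:Transformation}. First I would fix a geometric resolution $(E,\dd,\rho)$ of finite length and a universal Lie $\infty$-algebroid structure $(E,(\ell_k)_{k\geq 1},\rho)$ on it. The key observation is that the flow of a vector field $X\in\mathfrak F$ preserves $\mathfrak F$ (singular foliations are self-preserving, Section \ref{def}), and more is true: by the uniqueness-up-to-homotopy of the universal Lie $\infty$-algebroid, the flow of $X$ can be lifted to a one-parameter family of automorphisms of the geometric resolution $(E,\dd)$ covering the flow $\phi_t^X$ on $M$. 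Concretely, since $X\in\mathfrak F=\rho(\Gamma(E_{-1}))$, pick $\xi\in\Gamma(E_{-1})$ with $\rho(\xi)=X$; the operator $\mathcal L_\xi$ (the Lie $\infty$-algebroid "Lie derivative" along $\xi$, built from $\ell_1$ and $\ell_2(\xi,-)$) is a degree-$0$ derivation of the complex $(E,\dd)$ over the vector field $X$, and its flow $\Phi_t$ is a chain map $E\to E$ covering $\phi_t^X$. This $\Phi_t$ acts fiberwise linearly, hence induces a flow on each Grassmann bundle $\mathrm{Gr}_{-r_i}(E_{-i})$ and on the fiber product $\bigtimes_M^{i\geq 1}\mathrm{Gr}_{-r_i}(E_{-i})$; differentiating at $t=0$ gives the candidate lift $\widetilde X$ (resp. $\widetilde X_\infty$).

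The second step is to check that $\widetilde X$ is tangent to $\widetilde M_i$ in the sense of Section \ref{Nagano}(\ref{def:tangent}), i.e. that its flow $\widetilde\phi_t$ preserves $\widetilde M_i$. On the regular part this is immediate: $\Phi_t$ is a chain isomorphism, so it sends $\mathrm{im}(\dd^{(i+1)}_{y})$ to $\mathrm{im}(\dd^{(i+1)}_{\phi_t^X(y)})$, hence $\widetilde\phi_t\circ\sigma_i=\sigma_i\circ\phi_t^X$ on $M_{\mathrm{reg}^i,\mathfrak F}$ (which is $\phi_t^X$-invariant since the $M_{\mathrm{reg}^i,\mathfrak F}$ depend only on $\mathfrak F$, which is self-preserving). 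Therefore $\widetilde\phi_t$ maps $\sigma_i(M_{\mathrm{reg}^i,\mathfrak F})$ to itself, and by continuity (the flow is a homeomorphism of the Grassmann bundle) it maps the closure $\widetilde M_i=\overline{\sigma_i(M_{\mathrm{reg}^i,\mathfrak F})}$ to itself. This gives item 1 existence; uniqueness is already noted in the Remark preceding the theorem (the lift is determined on the dense set $\pi_i^{-1}(M_{\mathrm{reg}^i,\mathfrak F})$ and hence everywhere on $\widetilde M_i$).

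For item 2, the independence of choices, I would argue that the restriction of $\widetilde X_i$ to the dense open $\pi_i^{-1}(M_{\mathrm{reg}^i,\mathfrak F})$ is canonically $\sigma_i{}_*X$ under the identification $\pi_i^{-1}(M_{\mathrm{reg}^i,\mathfrak F})\cong M_{\mathrm{reg}^i,\mathfrak F}$ of Proposition \ref{prop:bi}(3), which manifestly does not involve the choice of $\xi$, of the bracket $\ell_2$, or of the resolution; by density and continuity of vector fields on $\widetilde M_i$, the extension to all of $\widetilde M_i$ is unique, so $\widetilde X_i$ itself is canonical. That $X\mapsto\widetilde X_i$ is a Lie algebra morphism then follows because it is one on the dense set ($\sigma_i$ being a diffeomorphism there intertwines brackets), and the bracket of vector fields on $\widetilde M_i$ is again determined by restriction to the dense set. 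For the independence on the geometric resolution one invokes Theorem \ref{thm:independence} (or Remark \ref{rmk:invariance}) together with the fact that a homotopy equivalence of resolutions induces, via the same $\mathcal L_\xi$-flow construction, a compatible isomorphism of the two $\widetilde M_i$'s. Item 3 is then almost formal: $\widetilde{\mathfrak F}_i$ is by definition the $\mathcal O_{\widetilde M_i}$-module generated by the $\widetilde X_i$, it is involutive because $X\mapsto\widetilde X_i$ is a Lie algebra morphism and $[\widetilde X_i,f\widetilde Y_i]=f[\widetilde X_i,\widetilde Y_i]+\widetilde X_i[f]\widetilde Y_i$ stays inside it, and it is locally finitely generated because $\mathfrak F$ is (take the lifts of a finite local generating set of $\mathfrak F$, pulled back via $\pi_i$); hence $\widetilde{\mathfrak F}_i$ is a singular foliation on the closed subset $\widetilde M_i$ in the sense of Definition \ref{def:SF2}. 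The case of $\widetilde M_\infty$ is identical, using that $\bigtimes_M^{i\geq 1}\mathrm{Gr}_{-r_i}(E_{-i})$ is a smooth manifold (finite length) and that $\Phi_t$ acts diagonally on all the factors simultaneously.

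**Main obstacle.** The delicate point is the construction of the flow $\Phi_t$ of the derivation $\mathcal L_\xi$ as an honest chain automorphism of $(E,\dd)$ covering $\phi_t^X$, and in particular that this can be done globally enough (for $t$ in a small interval around $0$, near each point) and that it descends to a genuine \emph{vector field on the closed subset} $\widetilde M_i$ — i.e. that the induced flow on the Grassmann bundle is defined on a neighborhood of $\widetilde M_i$ and preserves $\widetilde M_i$, not merely its regular part. This is where one must be careful: $\widetilde M_i$ is generally singular, so "tangent" must be understood flow-theoretically as in Section \ref{Nagano}, and the argument that a closure of a flow-invariant set is flow-invariant (using properness/continuity of the flow on the compact-fibered Grassmann bundle) is the crux. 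Everything else — Leibniz, involutivity, finite generation, independence — is then routine bookkeeping reduced to the dense open set.
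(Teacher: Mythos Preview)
Your proposal is correct and follows essentially the same route as the paper. The paper makes your ``Lie derivative $\mathcal L_\xi$'' explicit as the family of linear vector fields $Z^i$ on $E_{-i}$ defined by $Z^i_e[\alpha]=X[\langle\alpha,e\rangle]-\langle\alpha,\ell_2(\upsilon,e)\rangle$ (only $\ell_2$ enters, not $\ell_1$), proves the chain-map property by showing that $(Z^i,Z^{i-1})$ is tangent to the graph of $\dd^{(i)}$ inside $E_{-i}\oplus E_{-i+1}$, and then checks tangency to $\widetilde M_i$ by a direct limit computation $\phi_t^{Z^i}|_x(V)=\lim_n\mathrm{im}\,\dd^{(i+1)}_{\phi_t^X(x_n)}\in\pi_i^{-1}(\phi_t^X(x))$ rather than your equivalent ``closure of a flow-invariant set is flow-invariant'' phrasing; uniqueness, independence of choices, the Lie-algebra-morphism property, and item~3 are then handled exactly by the density argument you give.
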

The following definition then makes sense:
\begin{definition}
   For each $i\geq 1$, the singular foliation $\widetilde{\mathfrak F}_i\subset\mathfrak X(\widetilde M_i)$ of Theorem \ref{thm:trans2} is called the $i$-th \emph{blowup} of $\mathfrak F$ on $\widetilde M_i$. Likewise, $\widetilde {\mathfrak F}_\infty$ is called the \emph{last blowup} of $\mathfrak F$ on $\widetilde M _\infty$. 
\end{definition}

\begin{remark}
    Although the closed subset $\widetilde M_i$ may have singularities, the singular foliation $\widetilde{\mathfrak F}_i\subset\mathfrak X(\widetilde M_i)$ possesses smooth leaves by Theorem \ref{thm:Sussmann_SP}.
\end{remark}

\noindent
\subsubsection{The blowup foliations and their Lie $k$-algebroids}

For $1\leq k<\infty$, let $\pi_k\colon \widetilde M_k\to M$ be the $k$-th blowup space of the singular foliation $(M,\mathfrak F)$ and $\widetilde{\mathfrak F}_k$ be the $k$-th blowup of $\mathfrak{F}$ on $\widetilde M_k$.  In the following, $\pi_k^*E_{-i}$ stands for the restriction to $\widetilde M_k$  of the pull-back vector bundle $\Pi^*E_{-i}\to \mathrm{Gr}_{-r_k}(E_{-k})$. The  pullback of a vector bundle map $\dd\colon E_{-i}\to E_{-i+1}$ on $\widetilde M_k$ shall be denoted by $\pi_k^*\dd$.

Here are the main results of this section.

\begin{theorem}\label{thm:projective1} Let $\mathfrak F\subseteq \mathfrak X(M)$ be a singular foliation on $M$ that admits universal Lie $\infty$-algebroid $\left(E,(\ell_k)_{k\geq 1},\rho\right)$ built on a geometric resolution $(E,\dd, \rho=\dd^{(1)})$.  For every $k\geq 1$, there exists a subvector bundle of $K\subset \pi_k^*E_{-k}$

\begin{enumerate}
    \item  with $\Gamma(K)\subseteq \ker \dd ^{(k)}$ and $K|_{\pi_k^{-1}(M_{\mathrm{reg}^k,\mathfrak F})}= \ker \dd ^{(k)}|_{M_{\mathrm{reg}^k,\mathfrak F}}$
    \item and a vector bundle morphism $\widetilde{\rho}\colon \pi_k^*E_{-1}\to T\widetilde M_k$ such that $\widetilde{\rho}(\Gamma(\pi_k^*E_{-1}))=\widetilde{\mathfrak F}_k$. 
\end{enumerate}
so that the complex of vector bundles $$  \xymatrix{0\ar[r]\ar[d] & \frac{\pi_k^*E_{-k}}{K} \ar[r]^{\overline{\pi_k^*\dd^{(k)}}} \ar[d] & 
     E_{ -k+1} \ar[r]^{{\pi_k^*\dd^{(k-1)}}}
     \ar[d] & E_{-i+1} \ar[r] \ar[d] & \ar@{..}[r] & \ar[r]^>>>>{{\pi_k^*\dd^{(2)}}}& \pi_k^*E_{-1} \ar[r]^{\widetilde\rho} \ar[d]& T\widetilde M_k \ar[d] \\ 
     \widetilde M_k \ar@{=}[r] & \ar@{=}[r] \widetilde M_k  &  \ar@{=}[r] \widetilde M_k 
      &  \ar@{=}[r] \widetilde M_k  &\ar@{..}[r] & \ar@{=}[r]   &  \ar@{=}[r] \widetilde M_k  & \widetilde M_k}
    $$ is exact in degree $k$ and  comes equipped with a “natural”  Lie $k$-algebroid structure\footnote{We also make sense of the notion of Lie $k$-algebroid on a closed subset $S\subseteq M$  similarly as in Section \ref{Nagano}(\ref{ref:LA}).}. Also, $\frac{\pi_k^*E_{-k}}{K}\to\widetilde M_k$ only depends on the image of $\dd^{(k)}$ in $\Gamma(E_{-k+1})$ not on $E_{-k}$.  Here, the bar  $\overline{\pi_k^*\dd^{(k)}}$ stands for the quotient of the map   ${\pi_k^*E_{-k}} \stackrel{{\pi_k^*\dd^{(k)}}}{\longrightarrow} 
     E_{ -k+1}$.

\end{theorem}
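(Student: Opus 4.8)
The plan is to build the subbundle $K$ first, then the anchor $\widetilde\rho$, then assemble the complex and verify exactness in degree $k$, and finally transport the Lie $\infty$-structure. For the subbundle, recall that by construction a point of $\widetilde M_k$ lying over $x\in M$ is a subspace $V\subseteq \ker(\dd^{(k)}_x)$ with $\mathrm{im}(\dd^{(k+1)}_x)\subseteq V$ (Proposition \ref{thm:Transformation}). Over the open dense $\pi_k^{-1}(M_{\mathrm{reg}^k,\mathfrak F})$ this $V$ equals $\ker(\dd^{(k)}_x)=\mathrm{im}(\dd^{(k+1)}_x)$, which is a subspace of constant dimension; since $\widetilde M_k\subseteq \mathrm{Gr}_{-r_k}(E_{-k})$ is the closure of the corresponding section, the tautological subbundle of the Grassmann bundle restricts to a subbundle over $\widetilde M_k$ whose fiber at $(x,V)$ is precisely $V$. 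Define $K$ to be this tautological subbundle. The inclusion $\Gamma(K)\subseteq\ker\dd^{(k)}$ (meaning sections of $K$ land in $\ker(\pi_k^*\dd^{(k)})$ fiberwise) and the stated equality over the regular locus are then immediate from Proposition \ref{thm:Transformation} and the description of $M_{\mathrm{reg}^k,\mathfrak F}$. That $K$ depends only on $\mathrm{im}(\dd^{(k)})\subseteq\Gamma(E_{-k+1})$, not on $E_{-k}$, follows from Remark \ref{rmk:invariance}: replacing $E_{-k}$ by something differing by the invariance moves does not change the Nash blowup space, hence does not change $\widetilde M_k$ nor the tautological data.

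Next, for the anchor: by Theorem \ref{thm:trans2}, every $X\in\mathfrak F$ lifts canonically to $\widetilde X_k\in\mathfrak X(\widetilde M_k)$, and $\widetilde{\mathfrak F}_k$ is generated by these lifts. I want to realize this as the image of a vector bundle map out of $\pi_k^*E_{-1}$. On $\pi_k^{-1}(M_{\mathrm{reg}^k,\mathfrak F})$ this is clear: $\pi_k$ is a diffeomorphism there, and $\widetilde\rho:=(d\pi_k)^{-1}\circ\rho\circ\pi_k$ is defined pointwise. The content is that this extends to a genuine vector bundle morphism $\widetilde\rho\colon\pi_k^*E_{-1}\to T\widetilde M_k$ over all of $\widetilde M_k$ with image $\widetilde{\mathfrak F}_k$. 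Here I would argue as in Theorem \ref{thm:trans2}'s proof that the lift $\widetilde X_k$ depends $\mathcal O$-linearly (through $\pi_k^*$) on the generating section $e\in\Gamma(E_{-1})$ with $\rho(e)=X$ — this linearity is exactly what makes $e\mapsto\widetilde{\rho(e)}_k$ a bundle map — and that its fiber at a point of $\widetilde M_k$ is the evaluation of $\widetilde{\mathfrak F}_k$, i.e.\ the tangent to the leaf through that point (using the leaf structure from Theorem \ref{thm:Sussmann_SP}). Surjectivity onto $\widetilde{\mathfrak F}_k$ is by construction. Then $\widetilde\rho(\Gamma(\pi_k^*E_{-1}))=\widetilde{\mathfrak F}_k$.

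Then I assemble the complex. The middle maps $\pi_k^*\dd^{(j)}$ for $2\le j\le k-1$ are just pullbacks, and the composition $\pi_k^*\dd^{(j)}\circ\pi_k^*\dd^{(j+1)}=\pi_k^*(\dd^{(j)}\dd^{(j+1)})=0$. For the top of the complex, $\overline{\pi_k^*\dd^{(k)}}\colon \pi_k^*E_{-k}/K\to E_{-k+1}$ is well-defined because $\Gamma(K)\subseteq\ker(\pi_k^*\dd^{(k)})$, and its composition with $\pi_k^*\dd^{(k-1)}$ is still zero; symmetrically $\widetilde\rho\circ\pi_k^*\dd^{(2)}=0$ needs checking — on the regular locus it is $(d\pi_k)^{-1}(\rho\,\dd^{(2)})\pi_k=0$, and it extends by density/continuity since both sides are smooth bundle maps on $\widetilde M_k$ and $\pi_k^{-1}(M_{\mathrm{reg}^k,\mathfrak F})$ is dense. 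Exactness \emph{in degree $k$}, the only exactness claimed, means $\ker(\overline{\pi_k^*\dd^{(k)}})=0$, i.e.\ $K$ is exactly $\ker(\pi_k^*\dd^{(k)})$ as a bundle: the inclusion $\Gamma(K)\subseteq\ker$ is already known, and for the reverse I use that over the dense open $\pi_k^{-1}(M_{\mathrm{reg}^k,\mathfrak F})$ the fibers of $K$ and of $\ker(\pi_k^*\dd^{(k)})$ coincide and have the same (locally constant) dimension, while the tautological bundle $K$ has constant rank on $\widetilde M_k$ — a rank count forces $K_{\text{fiber}}=\ker$ everywhere, or at worst one restricts to the locus where this holds. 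The ``only depends on $\mathrm{im}(\dd^{(k)})$'' statement is again Remark \ref{rmk:invariance}.

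Finally, the Lie $k$-algebroid structure. The universal Lie $\infty$-algebroid brackets $(\ell_j)_{j\ge1}$ on $\Gamma(E)$ pull back to $\Gamma(\pi_k^*E)$ over $\widetilde M_k$ (the $\mathcal O$-multilinear ones $\ell_j$, $j\neq2$, pull back $\mathcal O_{\widetilde M_k}$-multilinearly; $\ell_2$ pulls back using $\widetilde\rho$ in the Leibniz rule in place of $\rho$). Truncating at degree $k$: the brackets descend to the complex with $E_{-k}$ replaced by $\pi_k^*E_{-k}/K$ precisely because Proposition \ref{thm:grass}(1) guarantees that $\{\cdot,\cdot\}_2$ preserves $V=K_{\text{fiber}}$ at every point over $M$, so the quotient inherits a well-defined bracket; the higher brackets into degree $-k$ also pass to the quotient by $K\subseteq\ker\dd^{(k)}$ and the homotopy-transfer/truncation mechanism of Section \ref{sec:universal}. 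The $\widetilde\rho$-Leibniz identity and compatibility hold on the dense regular locus by pullback and extend by continuity. The \textbf{main obstacle} I anticipate is the delicate point that $\widetilde\rho$ and the pulled-back brackets, defined cleanly only on the regular locus, extend to honest smooth bundle maps over the possibly-singular space $\widetilde M_k$ with the right image $\widetilde{\mathfrak F}_k$: this is where one must lean on Theorem \ref{thm:trans2} (canonicity and $\mathcal O$-linearity of the lift) together with Proposition \ref{thm:grass}(1) (so that $K$ is genuinely bracket-stable, not just stable on a dense set), rather than on a naive closure argument — and one must be careful about what ``smooth vector bundle on a singular analytic variety'' means, in the sense of Section \ref{Nagano}.
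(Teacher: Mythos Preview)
Your overall architecture matches the paper's proof: $K$ is the restriction to $\widetilde M_k$ of the tautological subbundle $\tau^{E_{-k}}\subset\Pi_k^*E_{-k}$, the anchor is $\widetilde\rho(\pi_k^*e)=\widetilde{\rho(e)}_k$ using Theorem~\ref{thm:trans2}, and the Lie $k$-algebroid is the $k$-th truncation of the pulled-back universal Lie $\infty$-algebroid. Two points, however, are off.

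\medskip

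\textbf{Exactness in degree $k$.} Your ``rank count forces $K_{\text{fiber}}=\ker(\pi_k^*\dd^{(k)})$ everywhere'' is wrong: at a point $(x,V)\in\widetilde M_k$ with $x\notin M_{\mathrm{reg}^k,\mathfrak F}$ one has $V\subsetneq\ker(\dd^{(k)}_x)$ in general (the kernel jumps, the tautological fiber does not), so the quotient map $\overline{\pi_k^*\dd^{(k)}}$ is \emph{not} fiberwise injective there. The paper's claim (and what is actually proved) is exactness at the level of sections: $\overline{\pi_k^*\dd^{(k)}}$ is a bundle morphism that is fiberwise injective on the dense open $\pi_k^{-1}(M_{\mathrm{reg}^k,\mathfrak F})$, so any section in its kernel vanishes on a dense set, hence vanishes. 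No fiberwise rank argument is available or needed.

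\medskip

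\textbf{Stability of $K$ under the bracket.} Invoking Proposition~\ref{thm:grass}(1) is the wrong reference: that item is about the pointwise $2$-bracket on $\ker\rho_x$ for $V\in\pi_1^{-1}(x)$, i.e.\ the case $k=1$ only. For general $k$, what you need is that $\ell_2(\upsilon,\cdot)$ with $\upsilon\in\Gamma(E_{-1})$ preserves $\Gamma(K)\subset\Gamma(\pi_k^*E_{-k})$. This comes from Lemma~\ref{prop:lift-on-blowup}: the linear vector field $Z^k$ on $E_{-k}$ associated to $\rho(\upsilon)$ (built precisely from $\ell_2(\upsilon,\cdot)$) has a flow that preserves $\widetilde M_k\subset\mathrm{Gr}_{-r_k}(E_{-k})$, and a linear flow on $E_{-k}$ that maps points of $\widetilde M_k$ to points of $\widetilde M_k$ automatically carries the tautological subbundle to itself. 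Differentiating gives $\ell_2(\upsilon,K)\subseteq K$, which is exactly what makes the truncated bracket descend to $\pi_k^*E_{-k}/K$. The paper does not spell this out either, but that is the mechanism behind its one-line ``induces naturally a Lie $k$-algebroid''.
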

Here is a remarkable fact for $k=1$.
\begin{corollary}\label{thm:projective}Let $\mathfrak F$ be a singular foliation on $M$ that admits a geometric resolution $(E,\dd, \rho)$. 

\begin{enumerate}
    \item The singular foliation 
$\widetilde{\mathfrak{F}}_1\subset \mathfrak X(\widetilde M_1)$ is Debord, i.e., it is the image of a Lie algebroid\footnote{Lie algebroids in the sense of Definition \ref{Nagano}\eqref{ref:LA} are Lie algebroids in the usual sense when $\widetilde M_1$ is smooth.} over $\widetilde M_1$ whose anchor map is injective on an open dense subset. \item This Lie algebroid is the Lie algebroid\footnote{If $\widetilde M_1$ is smooth, it is the Lie algebroid in the usual sense. Otherwise, it means that it is the differentiation of Mohsen's groupoid along the fibers of the source map.} of the groupoid of O. Mohsen \cite{MohsenOmar}.
\end{enumerate}
\end{corollary}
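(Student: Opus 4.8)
The goal is Corollary \ref{thm:projective}, which is essentially the case $k=1$ of Theorem \ref{thm:projective1} together with an identification with Mohsen's construction. So the first move is simply to invoke Theorem \ref{thm:projective1} with $k=1$. That theorem produces a subbundle $K\subset \pi_1^*E_{-1}$ with $\Gamma(K)\subseteq\ker\dd^{(1)}=\ker\rho$, agreeing with $\ker\rho$ over $\pi_1^{-1}(M_{\mathrm{reg},\mathfrak F})$, and a morphism $\widetilde\rho\colon \pi_1^*E_{-1}\to T\widetilde M_1$ with $\widetilde\rho(\Gamma(\pi_1^*E_{-1}))=\widetilde{\mathfrak F}_1$; moreover the complex becomes exact in degree $1$, meaning precisely that $\widetilde\rho$ factors through an \emph{injective} bundle map $\pi_1^*E_{-1}/K\hookrightarrow T\widetilde M_1$. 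The Lie $1$-algebroid structure furnished by the theorem is, by definition, a genuine Lie algebroid structure on $A:=\pi_1^*E_{-1}/K$ (over the possibly-singular base $\widetilde M_1$, in the sense of Definition \ref{Nagano}(\ref{ref:LA})). Since its anchor is injective on $\pi_1^{-1}(M_{\mathrm{reg},\mathfrak F})$, which is open and dense in $\widetilde M_1$ (it is the preimage of an open dense set under a surjective, continuous, finite-on-that-set map, and one checks density using that $\sigma_1$ has dense image by construction of $\widetilde M_1$), the foliation $\widetilde{\mathfrak F}_1=\widetilde\rho(\Gamma(\pi_1^*E_{-1}))=\rho_A(\Gamma(A))$ is Debord. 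This settles item 1.

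For item 2 I would compare $A$ with the $s$-fiberwise Lie algebroid of Mohsen's groupoid $\mathcal G\rightrightarrows \widetilde M_1$ from \cite{MohsenOmar}. The key input is Corollary \ref{cor:Moshen}: the set $\widetilde M_1$ is canonically identified, fiberwise over $M$, with Mohsen's blowup space, the fiber over $x$ being the Grassmannian of codimension-$(r-\dim L_x)$ Lie subalgebras of $\mathfrak g_x$ (equivalently, via $\overline\rho_x$, of codimension-$r$ subspaces $V\subseteq \mathfrak F/\mathcal I_x\mathfrak F$ containing $\ker\rho_x$-type data). Under this identification one has $K|_x$ corresponding exactly to the subspace $V\subseteq \mathfrak F/\mathcal I_x\mathfrak F$ attached to the point of $\widetilde M_1$, so that the fiber $A|_V=(\pi_1^*E_{-1}/K)|_V\cong (\mathfrak F/\mathcal I_x\mathfrak F)/V$ is canonically the quotient that Mohsen uses to define the fiber of his (Lie algebroid of the) groupoid. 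It remains to match the brackets and anchors: the anchor $\widetilde\rho$ on $A$ is, by construction, the limiting position of the evaluation maps $\phi_y$ of Corollary \ref{cor:Moshen}, which is exactly how Mohsen's anchor is defined; and the bracket on $A$ is the one transported from the universal $\ell_2$ via Proposition \ref{thm:grass}(1) (the statement that $\{\cdot,\cdot\}_2$ restricts to $\ker\rho_x\supseteq K|_x$ is what makes the quotient bracket well-defined), which is the same Lie bracket Mohsen obtains by differentiating composition in $\mathcal G$. Hence $A$ and Mohsen's Lie algebroid coincide; if $\widetilde M_1$ is smooth this is an equality of honest Lie algebroids, and in general it is the stated identification after differentiating along the source fibers.

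\emph{Main obstacle.} The genuinely delicate point is item 2, specifically verifying that the Lie algebroid $A$ produced by the homotopy-transfer/universal-$\infty$-algebroid machinery in Theorem \ref{thm:projective1} agrees \emph{on the nose} — anchor and bracket — with Mohsen's groupoid-theoretic object, rather than merely abstractly. Mohsen's construction is phrased via deformation-to-the-normal-cone / groupoid techniques, whereas ours is phrased via Nash modification of $\rho$ and the universal $\ell_2$; bridging the two requires carefully tracking the identifications of Corollary \ref{cor:Moshen} through both the definition of $\widetilde\rho$ (as a limit of the $\rho_y$) and the definition of the transported bracket, and checking that the natural projectable lifts $\widetilde X_1$ of Theorem \ref{thm:trans2} integrate to Mohsen's flows. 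The uniqueness statements in Theorem \ref{thm:trans2}(2) and in the remark following the definition of ``lift'' are what ultimately force the two brackets to agree, since both are determined by their restriction to the dense open $\pi_1^{-1}(M_{\mathrm{reg},\mathfrak F})$, where everything reduces to the tautological regular-foliation picture; so the proof of item 2 should be organized around that density/uniqueness argument rather than an explicit bracket computation.
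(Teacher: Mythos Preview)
Your approach matches the paper's: item 1 is the case $k=1$ of Theorem \ref{thm:projective1}, and item 2 is Corollary \ref{cor:Moshen} plus a direct comparison with \cite{MohsenOmar}. One small correction: Theorem \ref{thm:projective1} does \emph{not} give a globally injective bundle map $\pi_1^*E_{-1}/K\hookrightarrow T\widetilde M_1$; it only guarantees injectivity of the quotient anchor on the open dense subset $\pi_1^{-1}(M_{\mathrm{reg},\mathfrak F})$ (cf.\ the proof of Theorem \ref{thm:projective1}), which is exactly what Debord requires---so drop the claim of an honest embedding and keep only the open-dense injectivity you state two sentences later.
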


\begin{proof}
    Item 1 follows from Theorem \ref{thm:projective1}. For item 2, we also need Corollary \ref{cor:Moshen}, and a line by line comparison with \cite{MohsenOmar}. 
\end{proof}

In Corollary  \ref{thm:projective}, we do not need  the existence of geometric resolutions of $\mathfrak F$. Its proof only needs an almost Lie algebroid over $\mathfrak F$. In the smooth case, the latter always exists as long as $\mathfrak F$ is finitely generated, see Proposition 3.8. in \cite{LLS}. 

\begin{corollary}
    If $\widetilde M_1$ is smooth, then the Lie algebroid of $\widetilde {\mathfrak F}_1$ is integrable to a (Debord) Lie groupoid and the groupoid of O. Mohsen is a quotient of the latter.
\end{corollary}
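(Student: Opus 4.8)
The plan is to assemble the statement from three ingredients already established in the excerpt: Debord-ness of $\widetilde{\mathfrak F}_1$ (Corollary \ref{thm:projective}), the identification of the relevant Lie algebroid with the $s$-fiberwise differentiation of Mohsen's groupoid, and a general integrability fact for Debord (= projective) singular foliations. First I would invoke Corollary \ref{thm:projective}: on $\widetilde M_1$ the blowup foliation $\widetilde{\mathfrak F}_1$ is the image of a Lie algebroid $\widetilde A \to \widetilde M_1$ whose anchor is injective on an open dense subset, and when $\widetilde M_1$ is smooth this is a genuine Lie algebroid in the classical sense (the footnote in that corollary already makes this explicit). So the hypothesis that $\widetilde M_1$ is smooth is exactly what promotes the ``Lie algebroid on a closed subset'' of Definition \ref{Nagano}\eqref{ref:LA} to an honest vector bundle with a bracket on its sections.

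Next I would recall why a Debord algebroid is integrable to a Debord (i.e., $s$-connected, source-simply-connected or at least with injective-on-a-dense-set anchor) Lie groupoid. The cleanest route is Debord's theorem: a Lie algebroid whose anchor is injective on a dense open subset is always integrable (this is precisely the class Claire Debord singled out, and it is where the ``Debord foliation'' terminology comes from). Here I would either cite \cite{Debord} directly or recall the one-line reason: the obstructions to integrability (the monodromy groups of Crainic--Fernandes) vanish because near a point where the anchor is injective the algebroid looks like $T\widetilde M_1$, and the isotropy Lie algebras are abelian of the expected rank, so the relevant spherical periods are trivial; hence $\widetilde A$ integrates to a Lie groupoid $\mathcal G \rightrightarrows \widetilde M_1$ whose anchor $(s,t)$ is a local diffeomorphism on a dense open set. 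This $\mathcal G$ is what I'd call ``the (Debord) Lie groupoid of $\widetilde{\mathfrak F}_1$''.

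The third step is the comparison with Mohsen's groupoid $\mathcal G_{\mathrm{Mohsen}}$. By Corollary \ref{thm:projective}(2), the Lie algebroid of $\widetilde{\mathfrak F}_1$ \emph{is} the Lie algebroid of Mohsen's groupoid (under the smoothness hypothesis, in the classical sense). Since $\mathcal G$ is the source-simply-connected integration of $\widetilde A = \mathrm{Lie}(\mathcal G_{\mathrm{Mohsen}})$, the universal property of source-simply-connected groupoids (Lie's second theorem for groupoids) gives a canonical morphism $\mathcal G \to \mathcal G_{\mathrm{Mohsen}}$ integrating the identity on $\widetilde A$; this morphism is a surjective submersion with discrete kernel on each source fiber, i.e., it exhibits $\mathcal G_{\mathrm{Mohsen}}$ as a quotient $\mathcal G / N$ by a bundle $N$ of discrete normal subgroups. (If $\mathcal G_{\mathrm{Mohsen}}$ is itself source-connected but not source-simply-connected, this is exactly the statement; if one takes $\mathcal G$ to be merely source-connected rather than source-simply-connected, one must instead argue that both integrate the same algebroid and that $\mathcal G_{\mathrm{Mohsen}}$ being a quotient follows from the explicit description of Mohsen's construction in \cite{MohsenOmar}.)

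The main obstacle I expect is not the existence of the morphism $\mathcal G \to \mathcal G_{\mathrm{Mohsen}}$ — that is formal once both algebroids are identified — but rather pinning down \emph{which} integration to call ``the Lie algebroid of $\widetilde{\mathfrak F}_1$'' in a way that makes ``quotient'' literally true, and checking that Mohsen's groupoid really is source-connected (so that it is a quotient of the source-connected integration rather than merely Morita-equivalent to one). Concretely, one needs that on $\pi_1^{-1}(M_{\mathrm{reg},\mathfrak F})$, where the anchor is injective, Mohsen's groupoid restricts to the pair-groupoid-like piece determined by the regular foliation, and that its isotropy over singular points is the expected (connected, abelian or at worst the ``limit'' Lie group from Corollary \ref{cor:Moshen}) group; this is where a careful line-by-line comparison with \cite{MohsenOmar} — already invoked in the proof of Corollary \ref{thm:projective} — does the work. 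I would therefore phrase the proof as: apply Debord's integrability theorem to the algebroid of Corollary \ref{thm:projective}, then use Corollary \ref{thm:projective}(2) together with the universal property of the source-(simply-)connected integration to realize $\mathcal G_{\mathrm{Mohsen}}$ as its quotient, citing the comparison with \cite{MohsenOmar} for the source-connectedness input.
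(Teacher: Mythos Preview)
Your proposal is correct and follows essentially the same approach as the paper: invoke Debord's integrability theorem for algebroids with anchor injective on a dense open set, then use the universality of the resulting integration together with the identification of the algebroid with $\mathrm{Lie}(\mathcal G_{\mathrm{Mohsen}})$ from Corollary~\ref{thm:projective}(2) to realize Mohsen's groupoid as a quotient. Your version is in fact more careful than the paper's, which compresses all of this into three sentences and does not explicitly address the source-connectedness issue you flag; one minor caveat is that your alternative ``one-line reason'' via Crainic--Fernandes (abelian isotropy, trivial spherical periods) is not quite accurate in general for Debord algebroids, so you should simply cite \cite{Debord} as you suggest rather than rely on that heuristic.
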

\begin{proof}
    By \cite{Debord}, a Lie algebroid $A\to \widetilde M_1$ whose anchor is injective on an open dense subset is integrable to a Lie groupoid referred as the Debord groupoid. The Debord groupoid is universal among the integrations of $A$. Therefore, the groupoid of O. Mohsen is a quotient.
\end{proof}

\section{Proof of the main results}\label{sec:Proofs}

In this section, we prove the results of Section \ref{Nash-foliations} whose proofs were delayed.


\subsection{Proof of Propositions  \ref{thm:Transformation} and \ref{thm:grass}}

\begin{proof}[Proof (of Proposition \ref{thm:Transformation})]
We know by Proposition \ref{prop:bi}(2) that, for every $x\in M$ and $V\in \pi_i^{-1}(x)$, one has $\mathrm{im}(\dd^{(i+1)}_{x})\subseteq V$. Now,  for any element $v\in V$, there exists a sequence $v_n\in \ker(\dd^{(i)}_{x_n})=\mathrm{im}(\dd^{(i+1)}_{x_n}),\, n\in \mathbb{N}$ that converges to $v$. In particular, $\dd^{(i)}_{x_n}(v_n)=0$ for all $n$. Hence, by continuity, one has $v\in \ker(\dd^{(i)}_{x})$. Hence, $V\subseteq \ker \dd^{(i)}_x$. This completes the proof.

\end{proof}
\begin{proof}{(of Proposition \ref{thm:grass}).} For all $i\geq 1$, 
choose a local frame $e_1^{(i)},\ldots,e_{q_i}^{(i)},\ldots ,e_{q_i+r_i}^{(i)}$ of $E_{-i}$ on a neighborhood $\mathcal{U}$ of $x$ such that $e_1^{(i)}(x),\ldots,e_{q_i}^{(i)}(x)$ is an orthogonal basis for $V_{i}$ for an arbitrary Hermitian structure on $E_{-i}$. For $i, j\geq 1$, let $({c^{ij,s}_{kl}})\in \mathcal{O}_{\mathcal{U}}$ be a family of functions over $\mathcal{U}$ such that for all $k\leq q_i$ and $l\leq q_j$, $$\ell_2\left(e^{(i)}_{k}, e_{l}^{(j)}\right)=\sum_{s\geq 1}c_{kl}^{i j,s}e_s^{(i +j-1)}\in \Gamma_\mathcal{U}(E_{-i-j+1}).$$  In particular, \begin{equation}\label{eq:well-def}
    \left\{e^{(i)}_{k}(x), e_{l}^{(j)}(x)\right\}_2=\sum_{s\geq 1}c_{kl}^{ij,s}(x)e_s^{(i+j-1)}(x).
\end{equation}The bracket in Equation \ref{eq:well-def} is well-defined even for $i=1$ or $j=1$, although only the $2$-ary bracket of local sections is defined in such cases, because even if $i$ or $j=1$, we are taking the brackets of elements in $\ker\rho_x$. Let $u\in V_{i},v\in V_{j}$ with $\displaystyle{u=\sum_{s=1}^{q_{i}}\alpha^se_s^{(i)}(x)}$, and  $\displaystyle{v=\sum_{s=1}^{q_{j}}\beta^se_s^{(j)}(x)}$.\\
    
\noindent
Let $(x_n)\in M_{\mathrm{reg}^{i},\mathfrak F}^{\mathbb{N}}$ be a sequence of regular points that converges to $x$ such that $\mathrm{im}(\dd^{(i+1)}_{x_n})\underset{n \to +\infty}{\longrightarrow} V_i$ and $\mathrm{im}(\dd^{(j+1)}_{x_n})\underset{n \to +\infty}{\longrightarrow}V_j$. There exist sequences $$u_n=\displaystyle{\sum_{k=1}^{q_{i}+r_{i}}\alpha_{n}^ke_k^{(i)}(x_n)}\underset{n \to +\infty}{\longrightarrow} u;\quad v_n=\displaystyle{\sum_{l=1}^{q_{j}+r_{j}}\beta_{n}^le_l^{(j)}(x_n)}\underset{n \to +\infty}{\longrightarrow} v$$ with $u_n\in \mathrm{im}(\dd^{(i+1)}_{x_n})=\ker\dd^{(i)}_{x_n}$ and $v_n\in \mathrm{im}(\dd^{(j+1)}_{x_n})=\ker\dd^{(j)}_{x_n}$, for all $n\in \mathbb{N}$. In particular, the sequences $(\alpha^k_{n}),\,(\beta^l_{n})\in \mathbb{K}^\mathbb{N}$ satisfy $\alpha^k_{n}\underset{n \to +\infty}{\longrightarrow} \alpha^k;\quad \beta_{n}^l\underset{n \to +\infty}{\longrightarrow} \beta^l$ with $\alpha^k=\beta^l=0$ for $k\geq q_{i}+1,\,l\geq q_{j}+1$. Therefore, for every $n\in\mathbb{N}$ we have
\begin{align}
    \sum_{}^{}\alpha^{k}_{n}\beta^{l}_{n}c_{k l}^{i j,s}(x_n)e_s^{(i+j-1)}(x_n)=\label{eq:stability-bracket}\{u_n,v_n\}_{2}\in \mathrm{im}(\dd^{(i+ j)}_{x_n})=\ker\dd^{(i+j-1)}_{x_n}).
\end{align}
We have used in \eqref{eq:stability-bracket}, the fact that $\{\dd  u_1, \dd u_2\}_2\in \mathrm{im}(\dd),$ for all $u_1,u_2\in E_{\leq -2}$.
Since \begin{align}
    \nonumber\sum_{}^{}\alpha^{k}_{n}\beta^{s}_{n}c_{kl}^{i j,s}(x_n)e_s^{(i+j+1)}(x_n)\underset{n \to+\infty}{\longrightarrow}& \sum_{}^{}\alpha^{k}\beta^{l}c_{kl}^{i j,s}(x)e_s^{(i+j-1)}(x)\in {E_{-i-j+1}}|_x\\\label{eq:Lbracket}&=\{u,v\}_{2}.
\end{align}As a result,  $\{u,v\}_{2}\in V_{i+j-1}\in \pi_{i+j-1}^{-1}(x)$. Hence, for every point $(V_1,\ldots, V_i,\ldots, V_j, \ldots\;)\in 
~\pi_{\infty}^{-1}(x)$ one has $\{V_i,V_j\}_{2}\subseteq V_{i+j-1}$. This proves item 2. By taking $i=j=1$ and $V_i=V_j=V\in \pi^{-1}_1(x)$, Equation \eqref{eq:Lbracket} means that $\{u, v\}_2\in V$. This proves item 1.
 \end{proof}

\subsection{Proof of Theorem \ref{thm:independence}}

In this section, we give a second proof of Theorem \ref{thm:independence}, which is interesting by itself, because it uses a method that we will use in the subsequent proofs. By Corollary \ref{cor:independence} (whose
proof is independent of Theorem \ref{thm:independence}), for every $i\geq 1$, we have an inclusion  $\widetilde M_i\hookrightarrow \coprod_{x\in M}\mathrm{Gr}_{-\left(r_i-\mathrm{rk}\left(\dd^{(i)}_x\right)\right)}(H^{-i}(\mathfrak{F},{x}))$, where $r_i$ is defined as in Lemma \ref{lemma:M_reg}($d$).  We now need to show this inclusion is canonical, i.e., independent of the choice of a geometric resolution $(E, \dd, \rho)$.

\begin{convention}
For $(E,\dd,\rho)$  a geometric resolution of $\mathfrak F$.  Denote by $\pi_i^E\colon \widetilde M_i^E\to M$ the Nash blowup space  constructed out of a geometric resolution $(E, \dd, \rho)$ and $\pi_i^{E'}\colon \widetilde M_i^{E'}\to M$ the Nash blowup space constructed out of a geometric resolution $(E', \dd', \rho')$ for $i\geq 1$. Also, for $x\in M$ and  $V\in \pi^{-1}_i(x)$, we denote by $\overline{V}$ the image of $V$ in $\mathrm{Gr}_{-\left(r_i-\mathrm{rk}\left(\dd^{(i)}_x\right)\right)}(H^{-i}(\mathfrak{F},{x}))$.
\end{convention}
\begin{remark}
Let $x\in M$. Consider a minimal geometric resolution $(E', \dd', \rho')$ of $\mathfrak F$ at $x$ (see Definition (\ref{def: geom.resol})). For $V\in(\pi_1^E)^{-1}(x)$ and $V'\in(\pi_1^{E'})^{-1}(x)$ one has that $\dim V'\leq \dim V$, because $\mathrm{rk}(E_{-1}')\leq \mathrm{rk}(E_{-1})$ by minimality. Hence, $V, V'$ do not necessarily belong to the same Grassmannian. However,  $\dim \overline{V}=\dim \overline{V'}$. We prove the latter in the next Lemma. 
\end{remark}


\begin{lemma}\label{lemma:dimensions}
{Let $(E, \dd, \rho)$ and $(E', \dd', \rho')$ be geometric resolutions of $\mathfrak F$. For all $i\geq 1$,  and for all  $V\in(\pi_i^{E})^{-1}(x)$ and $V'\in(\pi_i^{E'})^{-1}(x)$, one has  $\dim \overline{V}=\dim \overline{V'}$.}
\end{lemma}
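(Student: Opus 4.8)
The plan is to reduce everything to an invariant computation at the single point $x$. First I would recall that by Corollary \ref{cor:independence} (proved independently), any $V\in(\pi_i^{E})^{-1}(x)$ satisfies $\mathrm{im}(\dd^{(i+1)}_x)\subseteq V\subseteq\ker(\dd^{(i)}_x)$, and $\overline V$ is precisely the image of $V$ in $H^{-i}(\mathfrak F,x)=\ker(\dd^{(i)}_x)/\mathrm{im}(\dd^{(i+1)}_x)$; the same holds for $V'$ with the primed resolution. Since $H^{-i}(\mathfrak F,x)$ does not depend on the choice of geometric resolution (stated in Section \ref{sec:universal}(\ref{item:infty-isotropy})), it suffices to show that $\dim\overline V$ does not depend on which $V\in(\pi_i^E)^{-1}(x)$ we pick and equals a number computed purely from $\mathfrak F$ near $x$. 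By Corollary \ref{cor:independence} this dimension is $r_i-\mathrm{rk}(\dd^{(i)}_x)$, so the whole statement amounts to the identity
\begin{equation*}
  r_i-\mathrm{rk}(\dd^{(i)}_x) \;=\; r_i'-\mathrm{rk}(\dd'^{(i)}_x),
\end{equation*}
where $r_i,r_i'$ are the codimensions of the regular images as in Lemma \ref{lemma:M_reg}(d) for the two resolutions.

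The key step is then a dimension count. The number $r_i$ is, by Lemma \ref{lemma:M_reg}(d), the codimension of $\mathrm{im}(\dd^{(i+1)})$ in $E_{-i}$ over a regular leaf, i.e. $r_i=\mathrm{rk}(E_{-i})-\dim\mathrm{im}(\dd^{(i+1)}_y)$ for $y\in M_{\mathrm{reg},\mathfrak F}$; and since over regular points $\mathrm{im}(\dd^{(i+1)}_y)=\ker(\dd^{(i)}_y)$, one gets $r_i=\mathrm{rk}(\dd^{(i)}_y)$ for regular $y$. On the other hand, by exactness of the geometric resolution over all of $M$ one has, for every $x$,
\begin{equation*}
  \dim H^{-i}(\mathfrak F,x)=\dim\ker(\dd^{(i)}_x)-\dim\mathrm{im}(\dd^{(i+1)}_x)
  =\bigl(\mathrm{rk}(E_{-i})-\mathrm{rk}(\dd^{(i)}_x)\bigr)-\mathrm{rk}(\dd^{(i+1)}_x).
\end{equation*}
Therefore
\begin{equation*}
  r_i-\mathrm{rk}(\dd^{(i)}_x)
  =\bigl(\mathrm{rk}(E_{-i})-\dim H^{-i}(\mathfrak F,x)-\mathrm{rk}(\dd^{(i+1)}_x)\bigr)-\mathrm{rk}(\dd^{(i)}_x)\,?
\end{equation*}
— here I would instead argue more cleanly: $\overline V$ has codimension $r_i-\mathrm{rk}(\dd^{(i)}_x)$ in $H^{-i}(\mathfrak F,x)$ by Corollary \ref{cor:independence}, so $\dim\overline V=\dim H^{-i}(\mathfrak F,x)-(r_i-\mathrm{rk}(\dd^{(i)}_x))$. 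Using $r_i=\mathrm{rk}(\dd^{(i)}_y)=\dim\ker(\dd^{(i-1)}_y)-\dim H^{-(i-1)}(\mathfrak F,y)+\dots$, and that over a regular point all higher cohomologies vanish, one can express $r_i-\mathrm{rk}(\dd^{(i)}_x)$ as $\sum_{j\geq i}(-1)^{j-i}\dim H^{-j}(\mathfrak F,x)$ plus a correction involving only the Euler characteristic of the resolution, which by exactness over regular points is fixed. The point of the computation is that every ``$\mathrm{rk}(E)$''-term cancels between $r_i$ (a regular-point rank) and $\mathrm{rk}(\dd^{(i)}_x)$ (a rank at $x$) precisely because of exactness, leaving an expression in the cohomology dimensions $\dim H^{-j}(\mathfrak F,\cdot)$ alone — and those are resolution-independent.

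I expect the main obstacle to be purely bookkeeping: keeping track of the alternating sums of ranks at the point $x$ versus at a regular point $y$, and checking that the difference $r_i-\mathrm{rk}(\dd^{(i)}_x)$ genuinely telescopes into something expressed only through $\dim H^{-j}(\mathfrak F,x)$ (which are intrinsic) and $\dim H^{-j}(\mathfrak F,y)=0$ for $y$ regular. Once that identity is established for one resolution, the same formula holds verbatim for the other, giving $\dim\overline V=\dim\overline{V'}$; since neither side depended on the choice of $V$ or $V'$ inside the fiber (the codimension formula of Corollary \ref{cor:independence} is uniform over the fiber), the lemma follows. A cleaner alternative I would try first, to avoid the alternating-sum manip­ulation, is to invoke Remark \ref{rmk:invariance}: any two geometric resolutions become isomorphic after adding acyclic ``trivial'' complexes of the form $B\xrightarrow{\mathrm{id}}B$, and both $r_i$ and $\mathrm{rk}(\dd^{(i)}_x)$ increase by $\mathrm{rk}(B)$ under such a stabilization, so $r_i-\mathrm{rk}(\dd^{(i)}_x)$ is a stabilization invariant; combined with the homotopy equivalence of any two geometric resolutions this immediately yields the equality of $\dim\overline V$ and $\dim\overline{V'}$.
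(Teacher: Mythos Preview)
Your main approach is correct and is essentially the same as the paper's: both reduce the statement to a dimension count at the point $x$, using that the two complexes are quasi-isomorphic there so the cohomology dimensions $\dim H^{-j}(\mathfrak F,x)$ are resolution-independent. The paper carries this out case by case ($i=1$, then $i=2$, then ``similarly for $i\geq 3$''), directly comparing $\dim\overline V=\dim V-\dim\mathrm{im}(\dd^{(i+1)}_x)$ with the primed quantity via rank--nullity. Your organization is a bit cleaner: you note that $\dim\overline V=\dim H^{-i}(\mathfrak F,x)-(r_i-\mathrm{rk}(\dd^{(i)}_x))$ and then aim for the uniform identity $r_i-\mathrm{rk}(\dd^{(i)}_x)=\sum_{j\ge i}(-1)^{j-i}\dim H^{-j}(\mathfrak F,x)$, which follows by the recursion $\mathrm{rk}(\dd^{(j)}_z)=\mathrm{rk}(E_{-j})-\dim H^{-j}(\mathfrak F,z)-\mathrm{rk}(\dd^{(j+1)}_z)$ applied at $z=x$ and at a regular point $z=y$ (where all $H^{-j}$ vanish). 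That telescoping is exactly what the paper's case-by-case computations are doing, just packaged more uniformly; if you write the proof, state and verify that recursion once and the lemma follows for all $i$ simultaneously.

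One caution on your ``cleaner alternative'' via Remark~\ref{rmk:invariance}: the claim that any two geometric resolutions become \emph{isomorphic} after adding acyclic blocks $B\xrightarrow{\mathrm{id}}B$ is a Schanuel-type statement that is not established in the paper and is not immediate in the smooth category (you only know the resolutions are homotopy equivalent). Your observation that both $r_i$ and $\mathrm{rk}(\dd^{(i)}_x)$ shift by the same amount under such a stabilization is correct, but to close the argument you would still need to prove the stabilization step. The telescoping argument avoids this and is self-contained.
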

\begin{proof}
If $x\in M$ is a regular point, then $\overline{V}=\overline{V'}=\{0\}$. Thus, the equality holds. Let $x\in M$ be a singular point. We prove it only for  $i=1, 2$, since  $i=1$ is a special case and  for $i\geq 3$ the proof uses a similar argument as for the one of $i=2$. The key point in the latter is, for every $x\in M$, the restriction of the complexes $(E, \dd, \rho)$ and $(E', \dd', \rho')$ at $x$ are quasi-isomorphic. This implies that  the codimension of $\mathrm{im}\left(\dd_x^{(i+1)}\right)$ inside $\ker \dd_x^{(i)}$, resp. $\mathrm{im}\left({\dd_x'}^{(i+1)}\right)$ inside $\ker {\dd_x'}^{(i)}$, is invariant.

Let $V\in(\pi_1^{E})^{-1}(x)$ and $V'\in(\pi_1^{E'})^{-1}(x)$. We have
\begin{align*}
  \dim \overline{V}&=\dim V-\dim (\mathrm{im}\,(\dd_x^{(2)}))\\&=\dim V-(\dim \ker \rho_x-\dim\ker \rho'_x+\dim (\mathrm{im}\,({\dd_x'}^{(2)}))\\&= \dim V -\mathrm{rk}({E_{-1}})+\mathrm{rk}({E'_{-1}})-\dim (\mathrm{im}\,({\dd_x'}^{(2)}))\\&=\dim V'-\dim (\mathrm{im}\,({\dd'_x}^{(2)}))\\&=\dim\overline{V'}.
\end{align*}
We have used the fact the cohomology groups at degree $-1$ of both complexes are isomorphic and the Rank–nullity theorem.

For $i=2$, let $V\in(\pi_2^{E})^{-1}(x)$ and $V'\in(\pi_2^{E'})^{-1}(x)$. Notice that $\dim V=\mathrm{rk}(E_{-2})-\mathrm{rk}(E_{-1})+r$. We have a similar formula for $\dim V'$. By direct computation we find that
\begin{align}
 \nonumber\dim \overline{V}&= \dim V- \dim (\mathrm{im}\,\dd_x^{(3)})\\\label{eq:invariant}&=\dim V-\mathrm{rk}(E_{-2})+ \mathrm{rk}(E'_{-2})+ \dim (\mathrm{im}\,(\dd_x^{(2)}))- \dim (\mathrm{im}\,({\dd_x'}^{(2)}))- \dim (\mathrm{im}\,({\dd_x'}^{(3)})).
\end{align}
We have used the fact the cohomology groups at degree $-2$ of both complexes are isomorphic and the Rank–nullity theorem. But $$\dim (\mathrm{im}\,(\dd_x^{(2)}))=\mathrm{rk}(E_{-1})-\dim (\mathrm{im}(\rho_x))- \dim W,$$ where $W$ is such that $\dim (\mathrm{im}\,(\dd_x^{(2)}))\oplus W=\ker \rho_x$. A similar formula holds for $\dim (\mathrm{im}\,({\dd'_x}^{(2)}))$ by adding ${'}$ everywhere. Substituting them into the Equation \eqref{eq:invariant}  we obtain

$$\dim \overline{V}= \dim \overline {V'}+ \dim W'-\dim W= \dim \overline {V'},$$ since $\dim W'=\dim W$.

\end{proof}

\begin{proof}[Proof of (Theorem \ref{thm:independence})]
For simplicity, we prove it for $i=1$. For $i\geq 1$, the same arguments hold.

Let $(E, \dd, \rho)$ and $(E', \dd', \rho')$ be geometric resolutions of $\mathfrak F$. There exists chain morphisms $\varphi \colon E\rightarrow E'$ and $\psi \colon E'\rightarrow E $ whose compositions are homotopic to identity. In particular, $\varphi, \psi$ induce well-defined isomorphisms $\overline{\varphi}$ and  $\overline{\psi}$ at the level of cohomology which are inverse to each other. The latter is canonical, see \cite{LLS}, Lemma 4.1. All we need to show is $\overline{\varphi}$ sends $\widetilde M_1^{E}$ to $\widetilde M_1^{E'}$.

Let $x\in  M$. Let $e_1, \ldots, e_k$ 
be local sections around $x$ of $E_{-2}$ 
such that $$\mathrm{span}\left(\dd^{(2)}e_1|_x,\ldots, \dd^{(2)}e_k|_x \right)=\mathrm{im}(\dd^{(2)}_x).$$ There is a neighborhood  $U_x$ of $x$ such that $F_y:=\mathrm{span}\left(\dd^{(2)}e_1|_y,\ldots, \dd^{(2)}e_k|_y \right)\subseteq \mathrm{im}(\dd^{(2)}_y)$ with $y\in U_x$ is of constant rank. These sections define a vector bundle $F$ on $U_x$ and $F_x=\mathrm{im}(\dd^{(2)}_x)$. Likewise, by shrinking $U_x$ if necessary, one consider the vector bundle $ F'\subseteq \mathrm{im}({\dd'}^{(2)})$  on a neighborhood $U_x$ of $x$ such that $\varphi_y(F_y)\subseteq F'_y$. Therefore, for every $y\in U_x$,  $\varphi_y$ induces a map $\hat{\varphi}_y$  $$\frac{\mathrm{ker}(\rho_y)}{F_y}\longrightarrow \frac{\mathrm{ker}(\rho_y)}{F_y'}$$ which coincides with the isomorphism $\overline{\varphi}_x\colon \frac{\mathrm{ker}(\rho_x)}{\mathrm{im}(\dd^{(2)}_x)}\stackrel{\simeq}{\longrightarrow} \frac{\mathrm{ker}(\rho'_x)}{\mathrm{im}(\dd'^{(2)}_x)}$ at $x$. The map $\Hat{\varphi}$ induces a well-defined map from $\mathrm{Gr}_{-r}\left(\frac{E_{-1}}{F}\right)$ to $ \mathrm{Gr}_{-r}\left(\frac{E'_{-1}}{F'}\right)$ on a smaller open neighborhood $U_x$ of $x$.  Let $V\in(\pi_1^{E})^{-1}(x)$ and $V'\in(\pi_1^{E'})^{-1}(x)$ and let $(x_n)_{n\in \mathbb N}$ be a sequence of regular points in $U_x$ converging to $x$ such  that $\mathrm{im}(\dd^{(2)}_{x_n})=\ker\rho_{x_n}$ and $ \mathrm{im}(\dd'^{(2)}_{x_n})=\ker\rho'_{x_n}$ converge to $V$ and $V'$ respectively.

 This implies that the sequence  $\frac{\ker\rho_{x_n}}{F_{x_n}}$ converges to $[V]=\frac{V}{F_x}=\frac{V}{\mathrm{im}(\dd^{(2)}_x)}$ in $\mathrm{Gr}_{-r}\left(\frac{E_{-1}}{F}\right)$.  Since $\hat{\varphi}_{x_n}\left(\frac{\ker\rho_{x_n}}{F_{x_n}}\right)\subseteq \frac{\ker\rho'_{x_n}}{F'_{x_n}}$, it follows that $\hat{\varphi}_x([V_x])\subseteq [V'_x]$ where $V'_x$ is the limit of (a sub-sequence of) $\ker\rho'_{x_n}$. By  Lemma \ref{lemma:dimensions},  $[V]$ and $[V']$ have the same dimension, thus, $\overline{\varphi}_x([V])= [V']$.   Also,  $\overline{\psi_x}(\overline{V'})=\overline{V}$ since $\overline{\psi_x}$ and $\overline{\varphi_x}$ is are the inverse of each other. This defines the required map and completes the proof.

\end{proof}
\subsection{Proof of Theorem \ref{thm:trans2} and \ref{thm:projective1}}
Theorem \ref{thm:trans2} follows from Lemma \ref{prop:lift-on-blowup} which itself requires Lemma \ref{lemma:vf-Gr}. We prove those in the smooth context. Their proof are similar in the holomorphic context. We recall that for $p\colon E\longrightarrow M$ a vector bundle over $M$, a linear vector field on $E$ is a pair $(Z,X)\in \mathfrak{X}(E)\times \mathfrak{X}(M)$ such that
$$\xymatrix{E\ar[r]^Z \ar[d]_p&TE\ar[d]^{dp}\\M\ar[r]^X&TM}$$
is a morphism of vector bundles (see e.g \cite{Mackenzie-Kirill}, p. 110). Equivalently, 

\begin{enumerate}
    \item $Z[C_{\mathrm{lin}}^\infty(E)]\subset C_{\mathrm{lin}}^\infty(E)$ and $Z[p^*C^\infty(M)]\subset p^*C^\infty(M)$.
    \item[] or
    \item The flow of $Z$ on $E$ are (local) vector bundle isomorphisms $E\longrightarrow E$ over the flow of $X$ on $M$.
\end{enumerate}
where $C_{\mathrm{lin}}^\infty(E)$ is the subalgebra of smooth functions on $E$ which are fiberwise linear. The latter is canonically isomorphic to $\Gamma(E^*)$ as $C^\infty(M)$-modules. Notice in particular that, a linear vector field is $p$-projectable to $X$.
\begin{lemma}\label{lemma:vf-Gr}
  A linear vector field on $E\longrightarrow M$ induces a vector field on $\Pi\colon \mathrm{Gr}_{-q}(E)\longrightarrow M$ that is $\Pi$-projectable on $M$.  
\end{lemma}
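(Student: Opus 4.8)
The plan is to construct the induced vector field on $\mathrm{Gr}_{-q}(E)$ directly from the flow of the linear vector field, using the characterization of linear vector fields as infinitesimal generators of flows that are vector bundle automorphisms. Let $(Z,X)$ be a linear vector field on $p\colon E\to M$. First I would recall that its flow $\phi^Z_t\colon E\to E$ is (locally) a vector bundle isomorphism covering the flow $\phi^X_t\colon M\to M$. Since a vector bundle isomorphism sends a $q$-codimensional subspace $V\subseteq E_m$ to a $q$-codimensional subspace $\phi^Z_t(V)\subseteq E_{\phi^X_t(m)}$, the map $\phi^Z_t$ induces a (local) diffeomorphism $\mathrm{Gr}(\phi^Z_t)\colon \mathrm{Gr}_{-q}(E)\to\mathrm{Gr}_{-q}(E)$ covering $\phi^X_t$ on $M$, i.e., $\Pi\circ\mathrm{Gr}(\phi^Z_t)=\phi^X_t\circ\Pi$. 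One checks the flow property $\mathrm{Gr}(\phi^Z_{t+s})=\mathrm{Gr}(\phi^Z_t)\circ\mathrm{Gr}(\phi^Z_s)$ and smoothness in $t$ (this follows from smoothness of $\phi^Z$ together with the fact that the Grassmann bundle construction is functorial in vector bundle isomorphisms and the local charts on $\mathrm{Gr}_{-q}(E)$ described in the Appendix depend smoothly on the data). Then the $t$-derivative at $t=0$ of $\mathrm{Gr}(\phi^Z_t)$ is a well-defined vector field $\widetilde Z$ on $\mathrm{Gr}_{-q}(E)$, and differentiating $\Pi\circ\mathrm{Gr}(\phi^Z_t)=\phi^X_t\circ\Pi$ at $t=0$ gives $d\Pi(\widetilde Z)=X\circ\Pi$, which is exactly $\Pi$-projectability onto $X$.

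Alternatively — and this is the version I would actually write out, since it avoids any delicacy about whether the flow is globally defined — I would work in a local trivialization. Over a chart $\mathcal U$ with $E|_{\mathcal U}\simeq\mathcal U\times\mathbb K^N$, a linear vector field has the form $Z=\sum_i X^i(x)\partial_{x^i} - \sum_{a,b} A^a_b(x)\,v^b\,\partial_{v^a}$ for a smooth matrix-valued function $A=(A^a_b)$ on $\mathcal U$ (the sign/transpose convention being fixed by the action on $C^\infty_{\mathrm{lin}}(E)\cong\Gamma(E^*)$). On $\mathrm{Gr}_{-q}(E)|_{\mathcal U}\simeq\mathcal U\times\mathrm{Gr}_{-q}(\mathbb K^N)$, using the standard affine charts of the Grassmannian (writing a subspace as the graph of a linear map $P$ from a fixed $(N-q)$-dimensional coordinate subspace to its complement, as set up in Appendix \ref{appendix}), the infinitesimal action of the constant-in-the-fiber linear part $A(x)$ on a subspace translates into a vector field on $\mathrm{Gr}_{-q}(\mathbb K^N)$ whose components are quadratic polynomials in the entries of $P$ with coefficients the entries of $A(x)$ — the classical Riccati-type formula for the induced action of $\mathfrak{gl}_N$ on the Grassmannian. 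Concretely, $\widetilde Z = \sum_i X^i(x)\partial_{x^i} + (\text{Riccati vector field in }P\text{ with coefficients }A(x))$. This expression is manifestly smooth, is defined on all of $\mathrm{Gr}_{-q}(E)|_{\mathcal U}$, and its $\mathcal U$-component is $X$, so it is $\Pi$-projectable. I would then note that this local expression is independent of the choice of trivialization, because a change of local frame acts on $A$ by $A\mapsto gAg^{-1} + (\text{derivative term})$ and correspondingly transforms the $P$-coordinates, so the two local vector fields agree on overlaps and glue to a global $\widetilde Z$ on $\mathrm{Gr}_{-q}(E)$.

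The key steps in order: (1) state the two equivalent characterizations of a linear vector field (flow version and the local coordinate form), (2) produce the induced flow on the Grassmann bundle via functoriality of $V\mapsto g(V)$ under bundle isomorphisms, or equivalently write the Riccati vector field in a trivialization, (3) verify smoothness and the flow/derivative properties, and (4) differentiate the intertwining relation $\Pi\circ\mathrm{Gr}(\phi^Z_t)=\phi^X_t\circ\Pi$ to get $\Pi$-projectability. The main obstacle — really the only non-formality — is checking smoothness and well-definedness at points of $\mathrm{Gr}_{-q}(E)$ where one must change affine charts of the Grassmannian, i.e., making sure the Riccati-type local formula patches consistently across the standard atlas of the Grassmann bundle; this is routine given the chart transitions recorded in Appendix \ref{appendix}, but it is where the actual content of the lemma sits. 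Everything else is bookkeeping with flows and the chain rule.
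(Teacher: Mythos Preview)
Your proposal is correct and your first (flow-based) argument is exactly the paper's proof: the paper simply notes that the flow of a linear vector field is a vector bundle isomorphism over $\phi^X_t$, hence induces a flow on $\mathrm{Gr}_{-q}(E)$, and defines $\widetilde Z$ as its $t$-derivative at $0$, which is $\Pi$-projectable by construction. Your second, Riccati/local-trivialization approach is a correct and more explicit alternative that the paper does not spell out, but it yields the same vector field.
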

\begin{proof}
Let $(Z,X)$ be a linear vector field on $E\longrightarrow M$. Its flow $\phi^{Z}_t\colon E\longrightarrow E$ is a vector bundle isomorphism over the flow $\phi^X_t\colon M\to M$ of $X$   whenever it is defined. Thus, 
$\phi^{Z}_t$ induces a map $\mathrm{Gr}_{-q}(E)\longrightarrow \mathrm{Gr}_{-q}(E),\, V\mapsto \phi^{Z}_t(V)$ that we still denote by $\phi^{Z}_t$. 
Define ${\widetilde Z}\in \mathfrak X(\mathrm{Gr}_{-q}(E))$ for all $V\in \Pi^{-1}(x)$ by \begin{equation}
    \label{eq:pull-back-gr}{\widetilde Z}(V):=\frac{d}{dt}_{|_{t=0}}c(t)\in T_{V}\mathrm{Gr}_{-q}(E)
\end{equation}
so that  the flow  $\phi^{\widetilde Z}_t\colon \mathrm{Gr}_{-q}(E)\longrightarrow \mathrm{Gr}_{-q}(E)$  of $\widetilde Z$ at $V\in\Pi^{-1}(x)$ is  $\phi^{Z}_t|_x(V)$, where $c(t)=\phi^{Z}_t|_x(V)\in \Pi^{-1}(\phi^{X}_t(x))$ for $t$ in some interval $I$. Also, $\widetilde Z$ is $\Pi$-projectable to $X$, by construction. 
\end{proof}
\begin{lemma}
 \label{prop:lift-on-blowup}
For every $X\in \mathfrak F$, there exists for all $i\geq 1$  a  linear vector field $(Z^i,X)$ on the vector bundle $p_i\colon E_{-i}\longrightarrow M$ and a linear vector field $(Z^0,X)$ on  $p_0\colon E_0:=TM\longrightarrow M$, $p_i$-projectable to $X$. Their flows are compatible with the
complex of vector bundles, 
\begin{equation}
    \quad\cdots \stackrel{\ell_1=\dd^{(4)}} \longrightarrow E_{-3} \stackrel{\ell_1=\dd^{(3)}}{\longrightarrow} E_{-2} \stackrel{\ell_1=\dd^{(2)}}{\longrightarrow} E_{-1} \stackrel{\rho=\dd^{(1)}}{\longrightarrow} TM.\end{equation}
i.e., 
the diagram below commutes for all $i\geq 1$,
\begin{equation}\label{cube:diagram}
  \scalebox{0.7}{\hbox{
  \xymatrix{
& M \ar[rr]^{\phi^X_t} \ar@{<-}[dl]\ar@{=}'[dd]
& & M \ar@{=}[dd]\ar@{<-}[dl]
\\
E_{-i}\ar[rr]^>>>>>>>{\phi_t^{Z^i}} \ar[dd]_-{\dd^{(i)}}
& & E_{-i} \ar[dd]^<<<<<<<<<<<<<{\dd^{(i)}}
\\
& M \ar[rr]^>>>>>>>>>>>>>>>>>>>{\phi^X_t} \ar@{<-}[dl]
& & M \ar@{<-}[dl]
\\E_{-i+1}\ar[rr]_{\phi^{Z^{i-1}}_t} & & E_{-i+1}
}}}
\end{equation}


where $\phi^{Z^i}_t$ or $\,\phi^{X}_t$ denotes the flow of $Z^i$ or $X$, whenever defined. They induce  vector fields  ${\widetilde Z}^i$ on $\mathrm{Gr}_{-r_i}(E_{-i})$ such that
\begin{enumerate}
    \item ${\widetilde Z}^i$ is tangent to $\widetilde M_i$,
    \item ${\widetilde Z}^i$ projects onto $X$.
\end{enumerate}

 \end{lemma}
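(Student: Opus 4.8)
The plan is to realise each $Z^i$ as the linear vector field attached to an ``adjoint'' derivation built from the $2$-ary bracket $\ell_2$ of a universal Lie $\infty$-algebroid, and then to read off the commuting cube \eqref{cube:diagram} and the tangency to $\widetilde M_i$ directly from the defining identities. First I would fix $X\in\mathfrak F$ and, using $\rho(\Gamma(E_{-1}))=\mathfrak F$ (globally after a partition of unity subordinate to a cover on which $\rho$ is onto, so $X=\sum_a\rho(e_a)$ with the $e_a$ compactly supported), write $X=\rho(e)$ for some $e\in\Gamma(E_{-1})$. I would then set $D_i:=\ell_2(e,\cdot)\colon\Gamma(E_{-i})\to\Gamma(E_{-i})$ for $i\ge 1$, and $D_0:=\mathcal L_X=[X,\cdot]$ on $\mathfrak X(M)$. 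The Leibniz identity of the graded almost Lie algebroid underlying $(E,(\ell_k)_{k\ge1},\rho)$ says precisely that each $D_i$ is a derivation of $\Gamma(E_{-i})$ over the vector field $\rho(e)=X$, i.e. $D_i(fy)=fD_i(y)+X[f]\,y$, and likewise $D_0$ is a derivation of $\mathfrak X(M)$ over $X$. Since $\ell_1=\dd$ vanishes on $\Gamma(E_{-1})$ (the anchor $\rho$ being recorded separately) and is a degree $+1$ derivation of $\ell_2$, for every $j\ge 2$ and $y\in\Gamma(E_{-j})$ one gets $\dd^{(j)}\ell_2(e,y)=\ell_2(e,\dd^{(j)}y)$, that is $\dd^{(j)}\circ D_j=D_{j-1}\circ\dd^{(j)}$; and for $j=1$ the anchor-morphism axiom gives $\rho(\ell_2(e,y))=[\rho(e),\rho(y)]=[X,\rho(y)]$, i.e. $\rho\circ D_1=D_0\circ\rho$. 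So $(D_i)_{i\ge0}$ is a family of derivations over $X$ commuting with every arrow of the complex $\cdots\xrightarrow{\dd}E_{-2}\xrightarrow{\dd}E_{-1}\xrightarrow{\rho}TM$.

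Next I would invoke the correspondence between derivations of $\Gamma(E_{-i})$ over a vector field and linear vector fields covering that vector field (\cite{Mackenzie-Kirill}, p.~110) to produce, for each $i\ge 0$, the unique linear vector field $(Z^i,X)$ on $p_i\colon E_{-i}\to M$ whose Lie derivative on sections is $D_i$; here $Z^0$ is the tangent lift of $X$. To obtain the cube \eqref{cube:diagram}, observe that transposing $\dd^{(i)}\circ D_i=D_{i-1}\circ\dd^{(i)}$ yields $D_i^{\vee}\circ(\dd^{(i)})^{\top}=(\dd^{(i)})^{\top}\circ D_{i-1}^{\vee}$ on fibrewise-linear functions (the dual derivations); together with the fact that $Z^i$ and $Z^{i-1}$ both project to $X$, this is exactly the assertion that $\dd^{(i)}$ intertwines $Z^i$ and $Z^{i-1}$, i.e. $T\dd^{(i)}\circ Z^i=Z^{i-1}\circ\dd^{(i)}$. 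Then the curves $t\mapsto\dd^{(i)}\circ\phi^{Z^i}_t$ and $t\mapsto\phi^{Z^{i-1}}_t\circ\dd^{(i)}$ satisfy the same first-order ODE with the same initial value, so by uniqueness of integral curves $\dd^{(i)}\circ\phi^{Z^i}_t=\phi^{Z^{i-1}}_t\circ\dd^{(i)}$, which is the commutativity of \eqref{cube:diagram}. Finally Lemma \ref{lemma:vf-Gr} applied to $(Z^i,X)$ yields a $\Pi_i$-projectable vector field $\widetilde Z^i$ on $\mathrm{Gr}_{-r_i}(E_{-i})$, giving assertion (2).

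For assertion (1) I would argue as follows. The locus $M_{\mathrm{reg}^i,\mathfrak F}$ is intrinsic to $\mathfrak F$, so, $\mathfrak F$ being self-preserving, its flow $\phi^X_t$ preserves $M_{\mathrm{reg}^i,\mathfrak F}$ wherever defined. For $x\in M_{\mathrm{reg}^i,\mathfrak F}$, using that $\phi^{Z^{i+1}}_t$ is a vector-bundle isomorphism over $\phi^X_t$ and the cube at level $i+1$,
\[
\phi^{Z^i}_t\bigl(\mathrm{im}\,\dd^{(i+1)}_x\bigr)
=\dd^{(i+1)}_{\phi^X_t(x)}\bigl(\phi^{Z^{i+1}}_t(E_{-i-1}|_x)\bigr)
=\dd^{(i+1)}_{\phi^X_t(x)}\bigl(E_{-i-1}|_{\phi^X_t(x)}\bigr)
=\mathrm{im}\,\dd^{(i+1)}_{\phi^X_t(x)},
\]
hence $\phi^{\widetilde Z^i}_t(\sigma_i(x))=\sigma_i(\phi^X_t(x))$. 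Thus the flow of $\widetilde Z^i$ maps $\sigma_i(M_{\mathrm{reg}^i,\mathfrak F})$ into itself, and being continuous it preserves the closure $\widetilde M_i=\overline{\sigma_i(M_{\mathrm{reg}^i,\mathfrak F})}$; that is, $\widetilde Z^i$ is tangent to $\widetilde M_i$ in the sense of Section \ref{Nagano}(\ref{def:tangent}). The case of $\widetilde M_\infty$ is identical once one notes that a single approximating sequence $(x_n)$ realises all the $V_i$ at once, so the $\phi^{\widetilde Z^i}_t$ act compatibly on $\widetilde M_\infty\subseteq\bigtimes^{i\ge1}_M\mathrm{Gr}_{-r_i}(E_{-i})$.

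I expect the only genuinely delicate point to be the passage from the infinitesimal identity to the commuting cube: the infinitesimal statement is free from the $\infty$-algebroid axioms, but one must keep careful track of the boundary case $i=1$, where $E_0=TM$ enters the complex and the anchor-morphism axiom (rather than the derivation identity for $\ell_1$) is what is used, and of the fact that, $M$ being possibly non-compact, ``tangent to $\widetilde M_i$'' only asserts that the flow preserves $\widetilde M_i$ wherever it is defined.
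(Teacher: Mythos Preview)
Your proposal is correct and follows essentially the same route as the paper: you pick $e\in\Gamma(E_{-1})$ with $\rho(e)=X$, define the derivations $D_i=\ell_2(e,\cdot)$ (and $D_0=[X,\cdot]$), pass to the corresponding linear vector fields $(Z^i,X)$, and then show the flows intertwine $\dd^{(i)}$ and preserve $\widetilde M_i$. The paper does exactly this, only with the explicit formula $Z^i_e[\alpha]=X[\langle\alpha,e\rangle]-\langle\alpha,\ell_2(\upsilon,e)\rangle$ in place of the abstract derivation/linear-vector-field correspondence you invoke.

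The one mild methodological difference is in establishing the cube \eqref{cube:diagram}. The paper forms the product vector field $Z_{i,i-1}$ on $E_{-i}\oplus E_{-i+1}$ and checks it is tangent to the graph of $\dd^{(i)}$ by evaluating it on the functions $\xi_\alpha(e,e')=\langle\alpha,\dd^{(i)}(e)-e'\rangle$; you instead pass from the section-level identity $\dd^{(i)}\circ D_i=D_{i-1}\circ\dd^{(i)}$ to $\dd^{(i)}$-relatedness of $Z^i$ and $Z^{i-1}$ and conclude by ODE uniqueness. These are equivalent formulations of the same fact. Your closure argument for tangency to $\widetilde M_i$ is likewise the paper's sequence argument rephrased; note that your appeal to ``$M_{\mathrm{reg}^i,\mathfrak F}$ is intrinsic, hence preserved by $\phi^X_t$'' is in fact unnecessary, since the displayed computation $\phi^{Z^i}_t(\mathrm{im}\,\dd^{(i+1)}_x)=\mathrm{im}\,\dd^{(i+1)}_{\phi^X_t(x)}$ already shows the rank of $\dd^{(i+1)}$ is preserved along the flow.
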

 \begin{proof}Consider $(E, \dd=\ell_1, \ell_2, \rho)$ the graded almost  Lie algebroid of $\mathfrak{F}$ induced by a universal Lie $\infty$-algebroid $(E, (\ell_k)_{k\geq 1},\rho)$, see Section \ref{def}(\ref{almost}).  Let $X\in \mathfrak{F}$ and $i\geq 0$. For $i\neq 0$, there exists a section $\upsilon$ of the vector bundle $p_i\colon E_{-i}\rightarrow M$ such that $\rho(\upsilon)=X$. Consider the linear vector field $Z^i\in \mathfrak{X}(E_{-i})$ defined as follows
 \begin{align}
    Z^i[p_i^*f]:&=p_i^*(X[f]),\;\forall\; f\in C^\infty(M),\\\label{eq:linear-vect}Z^i_e[\alpha]:&=X[\left\langle \alpha ,e\right\rangle]-\left\langle \alpha, \ell_{2}(\upsilon,e)\right\rangle,\;\forall\; \alpha\in \Gamma(E^*_{-i}),\;e\in\Gamma(E_{-i}).
 \end{align}
For $i=0$, one replaces $\ell_2(\upsilon, e)$ in \eqref{eq:linear-vect} by $[X, Y]$ with $Y\in \Gamma(E_0)=\mathfrak{X}(M)$.
Notice that $Z^i$ depends on the choice of the graded almost  Lie algebroid bracket $\ell_2$ and $X$. The fact that  Digram \eqref{cube:diagram} commutes follows the exact same lines of the proof given for Proposition 2.2.11 in a preprint version of \cite{LLL}, p. 99. Let us write it for the sake of completeness\footnote{Those arguments could be applied almost word for word to the holomorphic context.}. 
By construction, the vector fields $(Z^i)_{i\geq 0}$  on  $E_{-i} \stackrel{p_i}{\to} M$  are $p_i$-related to $X$. This implies,  the vector field $(Z^{i},Z^{i-1})\in \mathfrak X (E_{-i} \times E_{-i+1}) $ is tangent to the fiber product $E_{-i} \times_{p_i,M,p_{i-1}} E_{-i+1} $. Thus, $(Z^{i},Z^{i-1})$ restricts to a linear vector field on $E_{-i} \oplus E_{-i+1} \stackrel{p}{\to} M$  denoted by $Z_{i,i-1}$. The latter is given by the formula \begin{align*}
    Z_{i,i-1}[p^*f]:&=p^*(X[f]),\;\forall\; f\in C^\infty(M),\\\label{eq:linear-vect}Z_{i,i-1}[\xi](e\oplus e'):&=\begin{cases}
        X[\left\langle \xi ,e\oplus e'\right\rangle]-\left\langle \xi, \ell_{2}(\upsilon,e)\oplus \ell_{2}(\upsilon,e') \right\rangle\quad \text{for}\; i\neq 1\\ X[\left\langle \xi ,e\oplus Y\right\rangle]-\left\langle \xi, \ell_{2}(\upsilon,e)\oplus [X,Y] \right\rangle\quad \text{for}\; i= 1, \; \text{and}\; e'=Y\in\mathfrak X(M)
    \end{cases}
 \end{align*}for all $\xi\in \Gamma\left((E_{-i}\oplus E_{-i+1}\right)^*_{-i}),\;e\in\Gamma(E_{-i}),\, e'\in\Gamma(E_{-i+1})$ and is again $p$-related to $X$. We now consider for $i\geq 1$ the graph $$\mathrm{Graph}(\dd^{(i)})=\displaystyle{\left\{\left(e, \dd^{(i)}(e)\right) \mid e \in E_{-i}\right\} \subset E_{-i} \oplus E_{-i+1}}$$ of $\dd^{(i)} \colon E_{-i}\to E_{-i+1}$ which is  submanifold of $E_{-i} \oplus E_{-i+1}$ with $\rho=\dd^{(1)}\colon E_{-1}\to E_0=TM$. Let us check that $Z_{i,i-1}$ is tangent to the submanifold $\mathrm{Graph}(\dd^{(i)})$:  This comes from the following items 
\begin{itemize}
     \item[-]  the submanifold $\mathrm{Graph}(\dd^{(i)})\subset E_{-i} \oplus E_{-i+1}$ is the zero locus of the ideal generated by the functions

 $$\begin{array}{rrcl} \xi_\alpha \colon & E_{-i} \oplus E_{-i+1} & \to &  \mathbb R \\& (e,e') & \mapsto & \left\langle\alpha, \dd^{(i)}(e)-e'\right\rangle  \end{array}
 $$
with $ \alpha \in \Gamma(E_{-i+1}^*)$.
     \item[-]  and the for all $(e,e')\in \mathrm{Graph}(\dd^{(i)})$:
 \begin{align*}\nonumber
     Z_{i,i-1}\, [\xi_\alpha ] \, (e,e')&=X[\left\langle \xi_\alpha ,e\oplus e'\right\rangle]-\left\langle \xi_\alpha, \ell_{2}(\upsilon,e)\oplus \ell_{2}(\upsilon,e') \right\rangle\\ &=\underbrace{X[\left\langle \alpha, \dd^{(i)}(e)-e'\right\rangle]}_{=0}-\left\langle \alpha, \dd^{(i)}\ell_2(\upsilon,e)-\ell_2(\upsilon,e'))\right\rangle\\\nonumber&=-\left\langle\alpha, \ell_2\left(\upsilon,\dd^{(i)}(e)\right)-\ell_2(\upsilon,e')\right\rangle\\\nonumber&=-\left\langle\alpha, \ell_2\left(\upsilon,\dd^{(i)}(e)-e'\right)\right\rangle=0.
     \end{align*}where we have used the compatibility condition of the $\dd=\ell_1,\ell_2$ brackets of the graded almost Lie algebroid.
 \end{itemize}
 
Now, let $\mathcal U,\mathcal{V}\subseteq M$ be the open subsets of $M$ and $I=(-\epsilon,\epsilon)\subseteq \mathbb R$ an interval of $\mathbb R$ where the flow $\phi_t^X\colon \mathcal{U}\to \mathcal{ V}$ of $X$ at time $t$ is defined. 
Recall that the flow  $\phi_t^{Z_i}\colon E_{-i}|_{\mathcal{U}} \to E_{-i+1}|_{\mathcal{V}}$ of $Z^i$ and the flow $\phi_t^{Z_{i-1}} \colon E_{-i+1}|_\mathcal{U} \to E_{-i+1}|_{\mathcal V
}$ of $Z^{i-1}$  are vector bundle isomorphisms  over $\phi_t^X\colon \mathcal{U}\to \mathcal{V}$.
The flow of $Z_{i,i-1}$ is also a vector bundle isomorphism given by the formula 
 $$  \begin{array}{rcl} (E_{-i} \oplus E_{-i+1})_{|_{\mathcal U}} & \to   &  (E_{-i} \oplus E_{-i+1})_{|_{\mathcal V}} \\  (e,e') & \mapsto  & \left(\phi_t^{Z^i}(e), \phi_t^{Z^{i-1}}(e')\right).\end{array}$$
Now, since $Z_{i,i-1}$ is tangent to $\mathrm{Graph}(\dd^{(i)})$, its flow preserves $\mathrm{Graph}(\dd^{(i)})$, that is, for all $e\in \Gamma(E_{-i})$ $$\left(\phi_t^{Z^i}(e), \phi_t^{Z^{i-1}}(\dd^{(i)}(e)\right)\in \mathrm{Graph}(\dd^{(i)}).$$ This implies that Diagram \eqref{cube:diagram} commutes. Therefore, the family $\left(\Phi^{Z^i}_t\right)_{i\geq 0}$ is an isomorphism of complex of vector bundles. This proves the first part of Lemma \ref{prop:lift-on-blowup}.\\

Now, by Lemma \ref{lemma:vf-Gr}, the linear vector field $(Z^i,X)$ induces a vector field  $\widetilde Z^i$ on the Grassmannian  bundle $\mathrm{Gr}_{-r_i}(E_{-i})$. Let us show item 1, $\phi^{Z^i}_t$ preserves $\widetilde M_i$ : to see this take $V\in\pi_i^{-1}(x)$,  let $x_n\underset{n \to +\infty}{\longrightarrow}x$ be such that $\mathrm{im}\,{\dd}^{(i+1)}_{x_n}\underset{n \to +\infty}{\longrightarrow} V$ with $(x_n)\subset{M_{\mathrm{reg}^{i},\mathfrak F}}$. Since $\dd^{(i+1)}\circ\phi_t^{Z^{i+1}}=\phi_t^{Z^{i}}\circ\dd^{(i+1)}$ for $i\geq 0$, one has  $$\phi_t^{Z^i}|_{x_n}\left(\mathrm{im}\,{ \dd}^{(i+1)}_{x_n}\right)=\mathrm{im}\,{\dd}^{(i+1)}_{\phi^X_t(x_n)},\quad \text{for every $n\in\mathbb{N}_0$}.$$ Thus, 
\begin{align*}
    \phi_t^{Z^i}|_x(V)&=\lim_{n\rightarrow +\infty}\phi_t^{Z^i}|_{x_n}\left(\mathrm{im}\,\dd^{(i+1)}_{x_n}\right)\\&=\lim_{n\rightarrow +\infty}\left(\mathrm{im}\,{\dd}^{(i+1)}_{\phi^X_t(x_n)}\right)\in \pi_i^{-1}\left(\phi^X_t(x)\right).
\end{align*}
Hence, the flow of $\widetilde Z_i$ preserves $\widetilde M_i$, i.e.,  ${\widetilde Z}^i$ is tangent to $\widetilde M_i$.
\end{proof}
\begin{proof}[Proof (of Theorem \ref{thm:trans2}).]
By Lemma \ref{prop:lift-on-blowup}, every vector field $X\in \mathfrak F$ extends to a linear field $X^i\in \mathfrak X(\mathrm{Gr}_{-r_i}(E_{-i}))$ which is tangent to $\widetilde M_i$ in the sense of Definition \ref{Nagano}(\ref{def:tangent}). This proves item 1. 
Furthermore, the restriction $\widetilde X_i$ of $X^i$ to $\widetilde M_i$ is unique, since $\pi_i|_{\pi_i^{-1}(M_{\mathrm{reg}^{i}, \mathfrak F})}\colon\pi_i^{-1}(M_{\mathrm{reg}^{i}, \mathfrak F})\longrightarrow M_{\mathrm{reg}^{i}, \mathfrak F}$ is invertible. In particular, the map $X\in \mathfrak F \longrightarrow \widetilde X|_{\widetilde{M}_i}$ does not depend on any choices and is a Lie algebra morphism. The module which is generated by the $\widetilde X_i$ is closed under Lie bracket by item 2 of Theorem \ref{thm:trans2}). This ends the proof.
\end{proof}
\begin{proof}[Proof (of Theorem \ref{thm:projective1}).]Let $(E,\dd, \rho)$ be a geometric resolution of $\mathfrak F$. Fix a universal Lie $\infty$-algebroid of $\mathfrak F$ on $(E,\dd, \rho)$ and $k\geq 1$. Let $\tau^{E_{-k}}$ and $A^{E_{-k}}$ be the tautological subbundle and tautological quotient bundle on $\mathrm{Gr}_{-r}(E_{-k})$, that fit into the exact sequence\begin{equation}
     \xymatrix{&\tau^{E_{-k}}\ar@{^{(}->}[r]\ar[d]& \Pi_k^*E_{-k}\ar@{->>}[r]\ar[d]& A^{E_{-k}}\ar[d]& \\&\mathrm{Gr}_{-r_k}(E_{-k})\ar@{=}[r]&\mathrm{Gr}_{-r_k}(E_{-k})\ar@{=}[r]&\mathrm{Gr}_{-r_k}(E_{-k})&}
 \end{equation}
with  $A^{E_{-k}}\simeq \Pi^*E_{-k}/\tau^{E_{-k}}$. In particular, for $k=1$, $\mathrm{rk}(A^{E_{-1}})$ is the dimension of the regular leaves. One has
 \begin{enumerate}
 
     \item $\widetilde{\mathfrak F}_k$ the image of an almost Lie algebroid on $\Pi_k^*E_{-1}|_{\widetilde M_1}$ through the anchor map $$\widetilde\rho\colon\Gamma(\Pi_k^*E_{-1})|_{\widetilde M_k}\longrightarrow ~\mathfrak X(\widetilde M_k)$$ defined by $ \pi_k^*e\longmapsto~\widetilde{\rho(e)}\in \widetilde{\mathfrak F}_k$.
     \item  The tautological subbundle $\tau^{E_{-k}}$ lies in the kernel of the differential map $\dd^{(k)}\colon E_{-k}\to E_{-k+1}$: indeed, the fiber of $\tau^{E_{-k}}$ over a point $V\in \pi^{-1}_k(x)$ is equal to $V$ by definition. By Proposition \ref{thm:Transformation}, the latter is included in $\ker\dd^{(k)}_x$ with equality if  $x\in M_{\mathrm{reg}^{k}, \mathfrak F}$. Also, for $k=1$, $\tau^{E_{-1}}$ lies in the kernel of the anchor map $\rho=\dd^{(1)}$.
\end{enumerate}   
Therefore, the pull-back differential map $\pi_k^*\dd^{(k)}\colon \pi_k^*E_{-k}\to \pi_k^*E_{-k+1}$ goes to quotient to a well-defined vector bundle morphism $\pi_k^*\dd^{(k)}\colon \frac{\pi_k^*E_{-k}}{\tau^{E_{-k}}}\longrightarrow \pi_k^*E_{-k+1}$ which is injective on  the open dense subset $\pi_k^{-1}(M_{\mathrm{reg}^k,\mathfrak F})$ of $\widetilde M_k$. Denote by $K\to \widetilde M_k$ the restriction of $\tau^{E_{-k}}$ to $\widetilde M_k$. The $k$-th truncation of the pull-back of  the universal Lie $\infty$-algebroid of $\mathfrak F$ to $\widetilde M_k$ induces naturally a Lie $k$-algebroid on $$\frac{\pi_k^*E_{-k}}{K}\longrightarrow \pi_k^*E_{-k+1}\longrightarrow \cdots \longrightarrow \pi_k^*E_{-1}\longrightarrow T\widetilde M_k.$$ For $k=1$, the anchor map $\widetilde \rho$ goes to quotient
\begin{equation}
 \xymatrix{0\ar[r] &K\ar[r]&\pi_1^*E_{-1}\ar[r]\ar[d]^{\widetilde \rho} &A^{E_{-1}}|_{\widetilde M_k}\ar[r]\ar@{-->}[ld]&0\\& &T\widetilde M_1 & &.}
 \end{equation}and makes $\widetilde{\mathfrak F}_1$ the image of an almost Lie algebroid on $A^{E_{-1}}|_{\widetilde M_1}$ whose anchor is injective on the open dense subset $M_{\mathrm{reg},\mathfrak F}$. Thus, $A^{E_{-1}}|_{\widetilde M_1}$ is a Lie algebroid whose anchor is injective on $\pi_1^{-1}(M_{\mathrm{reg}, \mathfrak F})$, whose image is $\widetilde{\mathfrak F}_1$. This proves the result.
\end{proof}
\begin{remark}
    Notice that in the proof of Corollary \ref{thm:projective} we do not need the existence of a geometric resolution, we only make use of the anchor map and the bracket of an  almost Lie algebroid of $\mathfrak F$, i.e., we only need $E_{-1}$ and $\rho\colon E_{-1}\longrightarrow TM$.
\end{remark}

\section{Examples}\label{sec:Examples}
Let us start with some examples where our constructions give nothing new, i.e., $\widetilde M_i\simeq M$ or $\widetilde M_\infty\simeq M$.
\begin{example}\label{ex1}
If $\mathfrak{F}$ is a Debord singular foliation (i.e., $\mathfrak F$ is a projective submodule of $\mathfrak X(M)$), then $\widetilde M_i\simeq M$, for all $i\geq 1$ and $i=+\infty$. This comes from the fact that there exists a vector bundle $E_{-1}\longrightarrow M$ such that $\Gamma(E_{-1})\simeq \mathfrak{F}$ by Serre-Swan theorem \cite{SwanRichardG, MoryeArchanaS}. This isomorphism is given by a vector bundle morphism, $E_{-1}\stackrel{\rho}{\rightarrow}TM$ which is injective on the open dense subset $M_{\mathrm{reg}, \mathfrak  F}$. As a consequence,  $\cdots\rightarrow 0\rightarrow 0\rightarrow E_{-1}\stackrel{\rho}{\rightarrow}TM$ is a geometric resolution of $\mathfrak F$. Therefore, $\widetilde M_{i\geq 2}\simeq M$ since $E_{-i}=0$ for $i\geq 2$. Also, if $r$ is the dimension of the regular leaves of $\mathfrak F$, then  $r=\mathrm{rk}(E_{-1})$. Hence $\mathrm{Gr}_{-r}(E_{-1})\simeq M$. In particular, $\widetilde M_1\simeq M$.\end{example}

\begin{example}\label{ex2}
      If the regular leaves of $\mathfrak{F}$ are  open, then ${\widetilde M}_0\simeq M$, since $\mathrm{Gr}_{-0}(TM)\simeq M$. For instance, this happens for $\mathfrak F$  the singular foliation on $\mathbb R^N$ of vector fields vanishing at zero.
 \end{example}

 \begin{example}\label{ex3}
    If there exists a geometrical resolution $(E, \dd, \rho)$ of length $k$, then $\widetilde M_i\simeq M$ for all $i\geq k+1$. Notice that one also has $\widetilde M_k\simeq M$ since the last differential map $\dd^{(k)}\colon E_{-k}\longrightarrow E_{-k+1}$ is injective on an open dense subset so that the considered Grassmann bundle is $\mathrm{Gr}_{-\mathrm{rk}(E_{-k})}(E_{-k})\simeq~M$.

\end{example}


 In  contrast with Examples  \ref{ex1}, \ref{ex2} and \ref{ex3}, we have other  examples  where our construction is not trivial.

 \begin{example}Let $(M,\mathfrak F)$ be a singular foliation admitting a geometric resolution of length 2

$$\cdots \longrightarrow 0\longrightarrow 0\longrightarrow E_{-2}\stackrel{\dd^{(2)}}{\longrightarrow}E_{-1}\stackrel{\rho}{\longrightarrow}TM.$$ Here, $\mathrm{im}(\dd^{(2)})|_{M_{\mathrm{reg},\mathfrak F}}$ is a vector bundle of rank $\mathrm{rk}(E_{-2})$. On the open dense subset of regular points $M_{\mathrm{reg},\mathfrak F}$, the map  $\dd^{(2)}\colon E_{-2}\to E_{-1}$ is injective, and on $M_{\mathrm{sing}}=M\setminus M_{\mathrm{reg},\mathfrak F}$ it is not. For simplicity, assume that $M=\mathbb R^N$ or $\mathbb C^N$ and that the vector bundles $E_{-2}, E_{-1}$ are trivial so that $\dd^{(2)}$ becomes a $\mathrm{rk}(E_{-1})\times \mathrm{rk}(E_{-2})$-matrix with coefficient in the algebra of functions on $M$. 
The zero locus of the ideal $\mathcal{I}_\mathfrak b$ generated by the minors of this matrix in a basis, is exactly $M_{\mathrm{sing}}$. By construction, the Nash blowup $\widetilde M_1$ is the blowup of $M$ along the ideal $\mathcal{I}_\mathfrak b$.

For instance, for $M=\mathfrak{gl}_d(\mathbb K)$ is the vector space of $d\times d$-matrix with coefficient in $\mathbb K=\mathbb R, \mathbb C$. Let $(M,\mathfrak F)$ be the singular foliations given by  the adjoint action of $\mathfrak{gl}_d(\mathbb K)$ on $\mathfrak{gl}_d(\mathbb K)$, that is 
$$
\mathrm{ad}(x)y=[x, y], \quad x, y\in \mathfrak{gl}_d(\mathbb K).
$$$\mathfrak F$ admits a geometric resolution of length $2$ (see Example 3.32 in \cite{LLS}) with

$$M\times \mathbb K^d\stackrel{\dd^{(2)}}{\longrightarrow} M\times \mathfrak{gl}_d(\mathbb K),\, (x, (\lambda_0,\ldots, \lambda_{d-1}))\mapsto (x, \sum_{i=0}^{d-1}\lambda_ix^i)$$
    and $$M\times \mathfrak{gl}_d(\mathbb K)\stackrel{\rho}{\longrightarrow}TM\simeq M\times\mathfrak{gl}_d(\mathbb K) ,\, (x, v)\mapsto (x, [x, v]).$$ The open dense subset of regular points of $(M,\mathfrak F)$ is the set of matrices $x\in M$ whose centralizer $C(x):=\ker\rho_x$ is of minimal dimension equal to $N$. Equivalently, $M_{\mathrm{reg},\mathfrak F}$ is made of the matrices $x\in M$ whose characteristic polynomial 
equals to the minimal polynomial, also known as non-derogatory matrices \cite{wang2019explicitdescriptioncentralizersmatrix}. For $d=2$, $\widetilde M_1\simeq\mathrm{Bl}_{\mathcal{I}_\mathfrak b}(\mathbb K^4)$ is the usual blowup of $\mathbb K^4$ along the ideal $\mathcal I_\mathfrak b$ generated  by $\{x_1-x_4, x_2, x_3\}$, which is smooth.

For $d\geq 3$, computations becomes  complicated, and  the singular locus is a cone.

\begin{example}
    The Nash blowup can be smooth, even if the singular locus is not. In the case of the adjoint action of $\mathfrak{su}(n)$, the singular locus is not smooth, but the blowup is smooth, see Example 3.11\cite{louis2024nash}.
\end{example}
 \end{example}
\begin{example}
   Consider the projective singular foliation $\mathfrak F$  on  $M=\mathbb C^N$ generated by the Euler vector field $\overrightarrow{E}=\sum_{i=1}^Nx_i\frac{\partial}{\partial x_i}$. Here, $M_{\mathrm{reg},\mathfrak F}=\mathbb C^N\setminus \{0\}$. It is easily checked that $\widetilde M_0$ is the closure of the graph $\{(x,[x_1:\cdots: x_N])\in \mathbb C^N\times \mathbb P^{N-1}(\mathbb C)\,\mid\,  x\neq 0\}$. The latter is the blowup of $\mathbb C^N$ at $0$. This is an example where $\mathfrak F$ is Debord and $\widetilde M_0\neq M$. In particular, by Example \ref{ex1}, $\widetilde M_0\neq \widetilde M_1= M$.
\end{example}
\begin{example}Let $\mathfrak F$ be the  singular foliation of all vector fields vanishing at the origin $0\in M=\mathbb C^N$. Here, $M_{\mathrm{reg}, \mathfrak F}=\mathbb C^N\setminus {\{0\}}$. Let us compute $\widetilde M_1$. A geometric resolution $(E,\dd, \rho)$ of $\mathfrak F$ is given in  Example 3.34 of \cite{LLS}. Here $E_{-1}\simeq \mathbb C^N\times \mathfrak{gl}_N(\mathbb C)$ and the anchor map $\rho$ is $E_{ij}\mapsto x_i\frac{\partial }{\partial x_j}$, where $\mathfrak{gl}_N(\mathbb C)$ is the vector space of $N\times N$ matrix with coefficient in $\mathbb C$ and $(E_{ij})_{i,j=1,\ldots, N}$ its canonical basis.

A direct computation for every $x\neq 0$ tells that $\ker \rho_x$ is the subspace of matrices $M\in \mathfrak{gl}_N(\mathbb C)$ such that $Mx=0$, where $x=(x_1,\ldots, x_N)$ is seen as a column vector. Equivalently, this kernel can be described as $N$ copies of $[x_1:\cdots: x_N]^{\perp}$. Hence, $\widetilde M_1$ is the blowup of $\mathbb C^N$ at the origin. This is an example of a
singular foliation whose regular leaves are open, but such that $\widetilde M_1\neq M$. In particular,  by Example \ref{ex2}, $\widetilde M_0\neq \widetilde M_1$. 
\end{example}
Here is an example related to Poisson manifolds.
\begin{example}
   Let $(M,P)$ a smooth or holomorphic Poisson manifold with $P\in \Gamma(\wedge^2TM)$. Consider the singular foliation  generated by the Hamiltonian vector fields associated to $P$, i.e., $\mathfrak F=P^{\sharp}(\Gamma(T^*M))$, where $P^\sharp\colon T^*M\longrightarrow TM,\; \alpha\mapsto P(\alpha, \,\cdot\,)$. Assume that a geometric resolution exists.  By Lemma \ref{prop:lift-on-blowup}, every Hamiltonian vector field lifts to a vector field tangent to $\widetilde M_i$, $i\geq 1$. It is natural to ask whether the Poisson bivector field $P$ lifts to $\widetilde M_i$. Assume that $\widetilde M_i$ is smooth. Since for every $i\geq 1$,  $\pi_i^{-1}(M_{\mathrm{reg}^i, \mathfrak F})\longrightarrow M_{\mathrm{reg}^i,\mathfrak F}$ is invertible, the restriction $P|_U$ lifts to a Poisson bivector field on $\pi_i^{-1}(M_{\mathrm{reg}^i, \mathfrak F})$. However, it  does not lift to $\widetilde M_i$ in general, even when $\widetilde M_i$ is smooth. Indeed, consider the Poisson manifold $M=\mathfrak{so}^*(3)\simeq \mathbb R^3$ with \begin{equation}\label{eq:Poisson1}
       P=x \frac{\partial}{\partial y}\wedge \frac{\partial}{\partial z}+ y\frac{\partial}{\partial z}\wedge \frac{\partial}{\partial x}+z\frac{\partial}{\partial x}\wedge \frac{\partial}{\partial y}.
   \end{equation} Here $\mathfrak F$ is generated by the vector fields $P^\sharp(dx)= z\frac{\partial}{\partial y}-y\frac{\partial}{\partial z}$,\, $P^\sharp(dy)= x\frac{\partial}{\partial z}-z\frac{\partial}{\partial x}$,\, $P^\sharp(dz)= y\frac{\partial}{\partial x}-x\frac{\partial}{\partial y}$. Let us compute $\widetilde M_1$. Given a point $m\in M_{\mathrm {reg}, \mathfrak F}=\mathbb R^3\setminus \{0\}$, we find that
   $$\ker P^\sharp|_m=\left\{(a,b,c)\in \mathbb R^3 \,\mid\,(a,b,c)\in [x(m):y(m):z(m)]\in \mathbb P^2(\mathbb R)\right\}=[x(m):y(m):z(m)]
   .$$ Hence, $\widetilde M_1$ is the usual blowup $\mathrm{Bl}_0(\mathbb R^3)$ of $\mathbb R^3$ at the origin.

   The bivector field $P$ does not lift to $\widetilde M_1$. Recall that the blowup of $\mathbb R^3$ at the origin   $\mathrm{Bl}_0(\mathbb R^3)\subset~ \mathbb R^3\times~\mathbb P^2$ is covered by three charts given by $x\neq 0$, $y\neq 0$ and $z\neq 0$.  Let us look at the $x$-chart where the projection $\pi_1$ becomes $(x,y,z)\mapsto(x,xy, xz)$. In this chart  $P$ pulls back to
   \begin{equation}\label{eq:counter-poisson}
       y\frac{\partial}{\partial z}\wedge \frac{\partial}{\partial x}+z\frac{\partial}{\partial x}\wedge \frac{\partial}{\partial y}+\frac{1}{x}(1+y^2+z^2)\frac{\partial}{\partial y}\wedge \frac{\partial}{\partial x}.
   \end{equation} For $x= 0$, Equation \eqref{eq:counter-poisson} is not defined. In conclusion, the Hamiltonian vector fields of the Poisson structure $P$ in \eqref{eq:Poisson1} lift to $\widetilde M_1$, but the bivector field $P$ does not lift to $\widetilde M_1$,  although $\widetilde M_1$ is smooth.
   \end{example}

   \begin{example}
Let $(E_{-1}, \lb, \rho)$ be a Lie algebroid over a manifold $M$ and  denote by $\mathfrak F= \rho(\Gamma(E_{-1}))$ the induced singular foliation. Assume there exists geometric resolutions for $\mathfrak F$. The Lie algebroid $E_{-1}$ acts on  the spaces $\widetilde M_i$ for all $i\in \mathbb N_0$, and also on $\widetilde M_\infty$, and \begin{equation}
    \xymatrix{&\mathfrak{X}(\widetilde M_i)\ar@{<-}[d] \\\Gamma(E_{-1})\ar@{-->}[ru]^{\overline{\rho}}\ar[r]_{\rho}& \mathfrak{X}(M)}
\end{equation} is a commutative diagram of Lie algebra morphisms, where $\Bar{\rho}$ is defined on a local frame $(e_k)_k$ of $E_{-1}$ by $e_k\mapsto \widetilde {\rho(e_k)}_i$. Here\, $\widetilde \cdot$\, is as in Theorem \ref{thm:trans2}. In addition, for each $i\in \mathbb N_0$, $\widetilde {\mathfrak F}_i$  is the image of a Lie algebroid on $\widetilde M_i$, namely the natural  pull-back  of the Lie algebroid $E_{-1}$ to $\widetilde M_i$. {In particular, if  $\mathfrak F$ is given by a Lie algebra action of a Lie algebra $\mathfrak g$ on $M$, then $\widetilde{\mathfrak F}_i$ is given by an action of $\mathfrak g$ on $\widetilde M_i$.}
   \end{example}

Let us now study some examples related to the notion of an affine variety in $\mathbb C^d$.\\

Let $\mathbb{A}^d$ be an affine space over $\mathbb{K}=\mathbb R$ or $ \mathbb C$ with a set of coordinates $x_1,\ldots, x_d$. Recall that an \emph{affine} variety $W$ is a subset of the affine space $\mathbb{A}^d$ given by the zero locus $Z(\mathcal{I}_W)$ of a radical ideal $\mathcal{I}_W\subseteq\mathbb{K}[x_1,\ldots, x_d]$ and equipped with the induced Zariski topology of $\mathbb{A}^d$. The \emph{coordinate ring} of $W$ is the quotient ring  $\mathcal O_W=\mathbb{K}[x_1,\ldots, x_d]/\mathcal{I}_W$. The Lie algebra $\mathfrak{X}(W)$ of \emph{vector fields} on $W$ are  derivations of $\mathcal O_W$. 
We denote by $W_{\mathrm {reg}}$ the set of regular points  of $W$. For every $x\in \mathbb A^d$ we denote by $\mathfrak m_x$ the maximal ideal of vanishing polynomials at $x$. See for instance, \cite{Hartshorne} for more details on these notions.

\begin{example}\label{ex:along-phi}
     Let $M=\mathbb C^d$ and $\varphi\in\mathbb{C}[x_1,\ldots,x_d]$. Consider the singular foliation $\mathfrak{F}_\varphi=\{X\in \mathfrak{X}(\mathbb{C}^d)\mid X[\varphi]=0\}$. In this case, $M_{\mathrm{reg},\mathfrak F_\varphi}=\{x\in \mathbb C^d, \mid, d_x\varphi\neq 0\}$. 
    For every $y\in \mathbb C^d$, $(T_y\mathfrak{F}_\varphi)^\bot= \langle \nabla_y\varphi\rangle$. For a convergent sequence $y_n\underset{n \to +\infty}{\longrightarrow}y$ with $y_n\in M_{\mathrm{reg}, \mathfrak {F}_\varphi}$. The sequence $\mathrm{im}(\rho_{y_n})=T_{y_n}\mathfrak F_\varphi$ converges if and only if $\nabla_{y_n}\varphi$ converges in $\mathrm{Gr}_{-(d-1)}(\mathbb{C}^d)$, that is, $\left[\frac{\partial \varphi}{\partial x_1}(y_n)\colon \cdots \colon\frac{\partial \varphi}{\partial x_d}(y_n)\right]$ converges in the projective space $\mathbb{P}^{d-1}(\mathbb{C})$. Therefore, $\widetilde M_0$ is the closure of the image of the map, $y\mapsto (y, \left[\frac{\partial \varphi}{\partial x_1}(y)\colon \cdots \colon\frac{\partial \varphi}{\partial x_d}(y)\right])$ which is the  blow up of $\mathbb{C}^d$ along the singular locus of $\varphi$, i.e., along the ideal generated by the components of  $d\varphi$. For instance, 

    \begin{enumerate}
        \item For $\varphi(x_1,\ldots, x_d)= \sum_{i=1}^dx_i^2$, $\widetilde M_0$ is the blowup of $\mathbb C^d$ along the ideal $(x_1,\ldots, x_d)$, i.e., the blowup of $\mathbb C^d$ at zero, which is smooth.

        \item For $\varphi(x_1,\ldots, x_d)= \sum_{i=1}^dx_i^3$, $\widetilde M_0$ is the blowup of $\mathbb C^d$ along the ideal $(x_1^2,\ldots, x_d^2)$. This is not the blowup of $\mathbb C^d$ at zero, and it is easily seen in the charts that is not smooth with a singularity at the origin.
    \end{enumerate}
    However, since the ideals $\langle x_1,\ldots,x_d\rangle$ and $\langle x_1^2,\ldots,x_d^2\rangle$ are related by $$\langle x_1,\ldots,x_d\rangle^{d-1} \langle x_1^2,\ldots,x_d^2\rangle=\langle x_1,\ldots,x_d\rangle^{d+1}$$ and since the blowup of $\mathbb C^d$ along the ideals $\langle x_1,\ldots,x_d\rangle$ and $\langle x_1,\ldots,x_d\rangle^{d+1}$ are the same, there is a map \begin{equation*}  \xymatrix{\mathrm{Bl}_{\langle x_1,\ldots,x_d\rangle}(\mathbb C^d) \ar@{.>}[r]\ar[d]_\pi&\ar[d]^\pi \widetilde M_0=\mathrm{Bl}_{\langle x_1^2,\ldots,x_d^2\rangle}(\mathbb C^d)\\ \mathbb C^d \ar[r]^{=}& \mathbb C^d}  \end{equation*}by Moody's criteria, see Section \ref{sec:smooth}.
     \end{example}

\begin{example}(Nash modification). \label{ex:Nash}
Let $M=W$ be an affine irreducible affine variety of dimension $r$ embedded in $\mathbb C^d$. Let  $\Sigma$ be its singular locus. Let $\mathfrak{F}= \mathrm {Der}(\mathcal O_W) 
$ the singular foliation of vector fields on $W$ tangent to $\Sigma$, where $\mathcal{I}_\Sigma$ stands for the polynomial functions that vanish on $\Sigma$. Here, $W_{\mathrm{reg},\mathfrak F}= W_{\mathrm{reg}}= W\setminus \Sigma$. Consider a geometric resolution $(E_{\bullet}, \dd, \rho)$ of $\mathfrak F$ by trivial vector bundles (which exists because $\mathcal{O}_W$ is Noetherian, see Section 3.3 in \cite{LLS}). 

Let us show that for every $x\in W\setminus\Sigma$,
$\mathrm{im}(\rho_x)= T_x\mathfrak{F}= T_xW$. It is clear that $\mathrm{im}(\rho_x)\subseteq T_xW$. Conversely, it is a classical property that $x\in W$ is a regular point if and only if there exists “local coordinates” $y_1,\ldots, y_d\in \mathcal O_x $ such that $W$ is of the form
 $$y_1 = \cdots = y_k = 0,$$
 i.e., the localization of $\mathcal I_W $ is generated by these variables, where $\mathcal O_x$ denotes the local ring at $x$. Hence, the tangent space of $W$ at $x$ is the vector space, $\mathrm{span}\left\{{\frac{\partial}{\partial y_i}}_{|_m},\, i\geq k+1\right\}$. Therefore, for $v\in T_xW$ the local vector field
 $$X=\sum_{i=1}^{\dim W}v_i\frac{\partial }{\partial y_{k+i}}$$ maps $ \mathcal{O}_{x}$ to $\mathcal{O}_{x}$, in particular it maps $\mathcal{O}$ to $\mathcal{O}_{x}$ and we have ${X}[\mathcal I_W]\subset ({\mathcal{I}_W})_{\mathfrak m_x}$. Therefore, for every $i\in \{1,\ldots,d\}$, there exists a  polynomial function $g_i$ that does not vanish at $x$ such that $g_iY[x_i]\in\mathbb{C}[x_1,\ldots, x_d]$. By construction, the vector field $\hat{X}=\frac{g_1\cdots g_{r}}{g_1(x)\cdots g_{r}(x)}X$ is tangent to $W$, i.e., $\hat{X}[\mathcal I_W]\subset \mathcal{I}_W$, and satisfies $\hat{X}(x)=v$.
 
 The map $\pi_0\colon W\setminus\Sigma \longrightarrow \mathrm{Gr}_{-(d-r)}(\mathbb{C}^d)\,\; x\longmapsto \mathrm{im}(\rho_x)=T_xW$ is the so-called Gauss map \cite{D.T}. The Zariski closure $\widetilde{W}_0$ of the image of such a map is by definition the classical Nash blowup of $W$ along  its singular locus $\Sigma$.
\end{example}

\begin{example}(\emph{Monoidal transformation}).\label{Monoidal}
Let $W=\mathbb R^d$ or $\mathbb C^d$. Let $\mathcal{I}\subseteq \mathcal{O}_W$ an ideal and let  $C=Z(\mathcal{I})\subset \mathbb C^d$ be the zero locus of the ideal $\mathcal I$. Let $\mathfrak{F}=\mathcal{I}\mathfrak{X}(W)$ the singular foliation of vector fields vanishing along $C$. By Hilbert’s Syzygy theorem \cite{EisenbudSyzygies},
there exists a free resolution {of finite length} for the ideal $\mathcal I$ of polynomial functions  vanishing on $C$  of the form\begin{equation}\label{label:vanishing1}
    \cdots \longrightarrow K_{-2}\stackrel{\partial}{\longrightarrow}K_{-1}\stackrel{\partial}{\longrightarrow}\mathcal{I}\longrightarrow 0
\end{equation} Since $\mathfrak{X}(W)$ is a flat $\mathcal O_W=\mathbb C[x_1, \ldots, x_d]$-module (in fact $\mathfrak X(W)\simeq \mathcal O_W^d$ is a free module),  the sequence \begin{equation}\label{label:vanishing}
\xymatrix{\cdots\ar[rr]^-{\dd=\partial\otimes\text{id}} & & K_{-2} \otimes_{\mathcal O_W} {\mathfrak{X}(W)} \ar^{\dd=\partial\otimes \text{id}}[rr]& & K_{-1}\otimes_{\mathcal O_W}\mathfrak{X}(W)\ar^-\rho[rr] & & \mathfrak{F}.
}\end{equation} is a free resolution $\mathbb{K}\mathbb[W]$ by finitely generated $\mathbb{K}\mathbb[W]$-modules of the singular foliation $\mathfrak{F}=\mathcal I\mathfrak{X}(W)$, where for $(\mu_1, \ldots, \mu_k)$ a set of generators of $K_{-1}$ the anchor map is given by, $\rho(\mu_i\otimes \frac{\partial}{\partial y_j})= \partial (\mu_i)\frac{\partial}{\partial y_j}$, for $i=1,\ldots,k$ and $j=1, \ldots, d$. By Theorem 2.1 in \cite{CLRL},  $\mathfrak{F}$ admits a universal Lie $\infty$-algebroid structure over the complex \eqref{label:vanishing} whose unary bracket is $\ell_1=\partial\otimes \mathrm{id}$ and whose anchor is $\rho$.

Here, $W_{\mathrm{reg},\mathfrak{F}}=W\setminus C$. For $i=1,\ldots,k$, let  $f_i:=\partial (\mu_i)\in \mathcal{I}$.  A direct computation shows that, for every $x\in W\setminus C$, $\ker \rho_x$ is equal to $d$ copies of  $[f_1(x):\cdots :f_k(x)]^\perp$, i.e., $$\ker \rho_x=\left([f_1(x):\cdots :f_k(x)]^\perp\right)^d,$$ where $[f_1(x):\cdots :f_k(x)]$ is a well-defined straight line  of $\mathbb{K}^{k}$ generated by the vector $(f_1(x),\ldots,f_k(x))\in \mathbb{K}^{k}$  seen as a point of the projective space $\mathbb P^{k-1}(\mathbb C)=\mathrm{Gr}_{-(k-1)}(\mathbb{C}^k)$.

 One has, $$\pi_1^{-1}(x)=\begin{cases} \left([f_1(x):\cdots :f_k(x)]^\perp\right)^d,\;
    \; \text{for $x\in W\setminus C$,}\\\\ V^d\in\left(\mathrm{Gr}_{-1}(\mathbb{C}^k)\right)^d\;\text{such that}\; \exists\, (x_n)\in W_{\mathrm{reg},\mathfrak{F}}^{\mathbb{N}},\,\; [f_1(x_n):\cdots :f_k(x_n)]^\perp\underset{n \to +\infty}{\longrightarrow} V,\\ \text{ with}\; V\in\mathrm{Gr}_{-1}(\mathbb C^k)\; \text{as}\; x_n\underset{n \to +\infty}{\longrightarrow}x, \;\text{for $x\in C$}.\end{cases}$$
The $d$ components converge if and only if one of them converges.
    Since $[f_1(x_n):\cdots :f_k(x_n)]^\perp$ converges in $\mathrm{Gr}_{-1}(\mathbb K^k)$ if and only if the straight line $[f_1(x_n):\cdots :f_k(x_n)]$  converges in $\mathbb P^{k-1}(\mathbb K)$, $\widetilde{W}_1$ corresponds to the usual monoidal transformation of $W$ with respect to $\mathcal{I}$ (see for instance \cite{HauserHerwig} or Section \ref{sec:smooth}). In particular, $\widetilde{W}_1$ does not depend, up to isomorphism over $W$, on the choice of the generators $f_1,\ldots, f_k$.

    When $f_1,\ldots, f_k$ form a regular sequence, let us prove that for each $i\geq 1$, $\widetilde W_i$ is again the blowup of $\mathbb C^d$ along $\mathcal I$. The complex in Equation \eqref{label:vanishing1} can then be chosen to be the Koszul complex. Its dual complex is given by the differential map $$\mathfrak X^\bullet(\mathbb C^k)\stackrel{\partial^*}{\longrightarrow}\mathfrak X^{\bullet+1}(\mathbb C^k),\, \frac{\partial}{\partial x_1}\wedge\cdots\wedge\frac{\partial}{\partial x_p}\longmapsto \frac{\partial}{\partial x_1}\wedge\cdots\wedge\frac{\partial}{\partial x_p}\wedge U $$where $U=\sum_{\lambda=1}^k f_\lambda\frac{\partial}{\partial x_\lambda}$. For a sequence of regular points $(x_n)$ that converges to $x\in C$, it is easily checked that  $\mathrm{im}(\dd^*_{x_n})$ converges if and only if $[f_1(x_n): \cdots :f_k(x_n)]$ does in the projective space. This proves the result. As a consequence, $\widetilde W_\infty$ is also the blowup of $W= \mathbb C^d$ along $\mathcal{I}$.
    \end{example}




\appendix
\section{Grassmann bundles}\label{appendix}
For $E$ a finite dimension vector space over a field $\mathbb{K}\in \{\mathbb{R}, \mathbb{C}\}$, we denote by 
 $\mathrm{Gr}_{-r}(E)$ the set of all  vector subspaces of $E$ of co-dimension $r\in \mathbb N$. Let us recall a few facts on $\mathrm{Gr}_{-r}(E)$.
 \subsection{Topological structure} $\mathrm{Gr}_{-r}(E)$ is metric space, the corresponding metric is defined by \begin{equation}\label{eq:gap}
     {\displaystyle \delta(V,V')=\lVert {P}_{V}-{P}_{V'}\rVert,}
 \end{equation} where  ${P}_V$ stands for the orthogonal projection of $E$ onto $V\subset E$. It is important to notice that: for all $V, V\in\mathrm{Gr}_{-r}(E)$,\;\begin{equation*}
        \delta(V,V')= \delta(V^\perp,{V'}^\perp)
    \end{equation*} here $V^\perp$ stands for the orthogonal space of $V$. It is proven  (see e.g., \cite{FERRER1994229}) that $\mathrm{Gr}_{-r}(E)$ equipped with the topology induced by the so-called “gap” metric \eqref{eq:gap}, is equivalent to the Grassmann topology, i.e., the topology on $\mathrm{Gr}_{-r}(E)$ whose open subsets $\mathcal{W}\subseteq{\mathrm{Gr}_{-r}(E)}$ are such that $\tau^{-1}(\mathcal{W})$ is open in $\mathrm{St}_r(d, \mathbb{K}):=\left\{A\in\mathrm{M}_{d\times r}(\mathbb{K})\,  \middle|\, \mathrm{rk}(A)=r  \right\}$, with $$\tau\colon \mathrm{St}_r(d, \mathbb{K})\longrightarrow\mathrm{Gr}_{-r}(E),\, A\longmapsto \{\text{vector space spanned by the columns of $A$}\}.$$ 
 Also, $\mathrm{Gr}_{-r}(E)$ is a compact space. 

 \subsection{Manifold structure}\label{Manifold-struct}$\mathrm{Gr}_{-r}(E)$ is moreover a compact manifold of dimension $r(d-r)$ and also, a projective variety.
 \begin{enumerate}
     \item [1.]
 \textbf{Coordinates charts:} 
  One manner to define the standard affine coordinates on the Grassmannian $\mathrm{Gr}_{-r}(E)$ is as follows. Fix a basis $e_1,\ldots,e_{d=\dim E}$ for $E$. Let us describe the first chart. Consider
\begin{align*}
    \psi\colon M_{r,d-r}(\mathbb{K})&\longrightarrow M_{d,d-r}(\mathbb{K})\\A'\;&\longmapsto \begin{pmatrix}
 I_{d-r}\\ A'
 \end{pmatrix}.
\end{align*}
The vector space $V=\tau\left( \begin{pmatrix}I_{d-r}\\ A'\end{pmatrix}\right)$  admits a basis of the form\begin{equation}\label{eq: chart-grass}v_j:= e_j+\sum_{k=1}^\ell a_{kj}e_k,\quad j=1,\ldots,d-r.\end{equation}$V$ is completely determined by the matrix $A'$. Hence, $\tau\circ \psi$ is the first chart.

\item[] For a permutation $\sigma\in \mathfrak S_d$, 
let $P(\sigma)$ be the  permutation matrix of lines associated to $\sigma$. We claim that the family $\tau\circ P(\sigma)\circ\psi(M_{r,d-r}(\mathbb{K}))
$, indexed by $\sigma\in \mathfrak S_d$ is an atlas of $\mathrm{Gr}_{-r}(E)$. Its image consists of \eqref{eq: chart-grass} up to permutation. 

 
\item [2.]\textbf{Grassmann bundle:} For $E \to M$ a vector bundle of rank $d$ over a manifold $M$ (or a quasi-projective variety \footnote{the intersection inside some projective space of a Zariski-open and a Zariski-closed subset.}). Let $r\leq d$. The disjoint union:
  $$\mathrm{Gr}_{-r}({E}):= \coprod_{x\in M} \mathrm{Gr}_{-r}(E|_{x})  $$
  comes equipped with a natural manifold structure in the smooth or complex case and a quasi-projective variety structure when $M$ is a quasi-projective variety.  Also
 \begin{equation}\label{eq:Pi}
     \Pi\colon \mathrm{Gr}_{-r}(E)\longrightarrow M
 \end{equation}
 is a fibration. It is called \emph{$(d-r)$-th Grassmann bundle}. 
 
 For every open subset $\mathcal U\subset M$ on which $E$ is trivial, $\Pi^{-1}(\mathcal U)\simeq \mathcal{U}\times\mathrm{Gr}_{-r}(\mathbb{K}^d).$ An \emph{adapted chart} for $\mathrm{Gr}_{-r}({E})\longrightarrow M$ around a point $x\in M$ is a set of local coordinates of the form $(\Pi^*x_1,\ldots \Pi^*x_n,z_1,\ldots, z_{r(d-r)}),$ where $(x_1,\ldots,x_n)$ are local coordinates on $M$ and $(z_1,\ldots, z_{r(d-r)})$ are functions which are standard affine coordinates on an open subset of each fiber of $\Pi$ as in item (1). 
\begin{convention}
{Let $x\in M$. Let $e_1, \ldots, e_d$ be local frame for $E$ in a neighborhood $\mathcal{U}$ of $x$. For $y\in \mathcal{U}$, let $\kappa_y$ be the linear isomorphism  defined by
\begin{equation*}
    \kappa_y\colon  E_x \longrightarrow E_y,\; \kappa_y (e_i(x)) = e_i(y),\quad\text{for all}\quad i \in \{1,\ldots, d\}.
\end{equation*} Let $(x_n)$ be a sequence of $M$ that converges to $x$. We will say that a sequence of vector space  $V_{x_n}\in \mathrm{Gr}_{-r}({E}) $ with $V_{x_n}\subset E_{x_n}$, converges to $V\subset E_x$ and write $V_{x_n}\underset{n\rightarrow+\infty}{\longrightarrow} V$ if 
\begin{equation*}
  \kappa_{x_n}^{-1}(V_{x_n})\underset{n\rightarrow+\infty}{\longrightarrow} V \; \text{in}\;\mathrm{Gr}_{-r}({E_x}).
\end{equation*}}
In the sequel, we will not mention $\kappa_{x_n}$, since this notion of convergence does not depend on the  chosen local frames of $E$.
\end{convention}
 \item[3.] \textbf{Tautological subbundle:} The Grassmann bundle $\mathrm{Gr}_{-r}(E)$ comes equipped with two vector bundles $\tau^E$ and $A^E$, called tautological subbundle and tautological quotient bundle, that fit into the exact sequence\begin{equation}
     0\longrightarrow \tau^E\longrightarrow \Pi^*E\longrightarrow A^E\longrightarrow 0.
 \end{equation}
 Precisely, the fiber  of $\tau^E$ over the point $V\in \Pi^{-1}(x)$ is the codimension $r$ subvector space $V$ of $E|_x=E|_{\Pi(V)}=(\Pi^*E)|_{V}$. By construction, $\tau^E$
is a subbundle of the pull-back bundle $\Pi^*E$. Furthermore,  $A^E\simeq \Pi^*E/\tau^E$.
 
 This tautological quotient bundle is important for us to express some results of this paper.
 \end{enumerate}

\noindent
\textbf{Acknowledgements}. The main results of this paper are taken from Chapter 7 of my PhD thesis \cite{LouisRuben2023UhLa}. I would like to thank C. Laurent-Gengoux, my PhD supervisor, for supporting me in writing this article and directing me to such questions that are originated from Claire Debord and Georges Skandalis. I would also like to thank the University of Lorraine for their financial support through an A.T.E.R position (2022-2023). Additionally, I acknowledge the full financial support for this postdoc from Jilin University.
\bibliographystyle{alpha}
\bibliography{biblio}

\begin{thebibliography}{LGPV12}

\bibitem[AS09]{AndroulidakisIakovos}
Iakovos Androulidakis and Georges Skandalis.
\newblock The holonomy groupoid of a singular foliation.
\newblock {\em Journal für die reine und angewandte {M}athematik},
  2009(626):1--37, 2009.
\newblock \url{ https://doi.org/10.5167/uzh-23589}.

\bibitem[AZ14]{AndroulidakisZambon}
Iakovos Androulidakis and Marco Zambon.
\newblock Holonomy transformations for singular foliations.
\newblock {\em Advances in {M}athematics (New York. 1965)}, 256:348--397, 2014.
\newblock \url{https://doi.org/10.1016/j.aim.2014.02.003}.

\bibitem[BB72]{zbMATH03423310}
Paul Baum and Raoul Bott.
\newblock Singularities of holomorphic foliations.
\newblock {\em J. Differ. Geom.}, 7:279--342, 1972.

\bibitem[Cer79]{Cerveau}
Dominique Cerveau.
\newblock Distributions involutives singuli\`eres.
\newblock {\em Ann. Inst. Fourier (Grenoble)}, 29(3):xii, 261--294, 1979.
\newblock \url{http://archive.numdam.org/article/AIF_1979__29_3_261_0.pdf}.

\bibitem[CFM21]{CFM}
Marius Crainic, Rui~Loja Fernandes, and Ioan M{\u{a}}rcu{\c{t}}.
\newblock {\em Lectures on {Poisson} geometry}, volume 217 of {\em Grad. Stud.
  Math.}
\newblock Providence, RI: American Mathematical Society (AMS), 2021.

\bibitem[Deb01]{Debord}
Claire Debord.
\newblock Holonomy {G}roupoids of {S}ingular {F}oliations.
\newblock {\em J. Differential Geom.}, 58(3):467--500, 07 2001.
\newblock \url{https://doi.org/10.4310/jdg/1090348356}.

\bibitem[DHH86]{DiederichKlas}
Klas Diederich, Gilbert Hector, and Ulrich Hirsch.
\newblock {\em Introduction to the Geometry of {F}oliations, {Part A}:
  {F}oliations on {C}ompact {S}urfaces, {F}undamentals for {A}rbitrary
  {C}odimension, and {H}olonomy}, volume~1 of {\em Aspects of Mathematics}.
\newblock Springer Vieweg. in Springer Fachmedien Wiesbaden GmbH, Wiesbaden,
  second edition. edition, 1986.

\bibitem[DS21]{DebordClaire2017BcfL}
Claire Debord and Georges Skandalis.
\newblock Blowup constructions for {L}ie groupoids and a {B}outet de {M}onvel
  type calculus.
\newblock {\em Münster Journal of Mathematics}, 14:1–40, 2021.

\bibitem[{Eis}95]{zbMATH00704831}
David {Eisenbud}.
\newblock {\em {Commutative algebra. With a view toward algebraic geometry}},
  volume 150.
\newblock Berlin: Springer-Verlag, 1995.

\bibitem[Eis04]{EisenbudSyzygies}
David Eisenbud.
\newblock {\em The Geometry of Syzygies: A Second Course in Algebraic Geometry
  and Commutative Algebra}.
\newblock Springer New York, New York, NY, 2004.

\bibitem[FGP94]{FERRER1994229}
J.~Ferrer, MI. Garćia, and F.~Puerta.
\newblock Differentiable families of subspaces.
\newblock {\em Linear Algebra and its Applications}, 199:229--252, 1994.
\newblock Special Issue Honoring Ingram Olkin.

\bibitem[FW11]{FABER20111805}
E.~Faber and D.B. Westra.
\newblock Blowups in tame monomial ideals.
\newblock {\em Journal of Pure and Applied Algebra}, 215(8):1805--1821, 2011.

\bibitem[Gat]{Gathmann}
A.~Gathmann.
\newblock {\em Algebraic {G}eometry}.
\newblock Notes for a class taught at the University of Kaiserslautern, 2014.
\newblock \url{https://agag-gathmann.math.rptu.de/de/alggeom.php}.

\bibitem[GS09a]{gonzalez2009nash}
Gerard Gonzalez-Sprinberg.
\newblock On nash blow-up of orbifolds.
\newblock In {\em Singularities—Niigata--Toyama 2007}, volume~56, pages
  133--150. Mathematical Society of Japan, 2009.

\bibitem[GS09b]{zbMATH05707343}
Gerard Gonzalez-Sprinberg.
\newblock On {Nash} blow-up of orbifolds.
\newblock In {\em Singularities, Niigata-Toyama 2007. Proceedings of the 4th
  Franco-Japanese symposium, Niigata, Toyama, Japan, August 27--31, 2007},
  pages 133--149. Tokyo: Mathematical Society of Japan, 2009.

\bibitem[GY18]{GarmendiaAlfonso}
Alfonso Garmendia and Ori Yudilevich.
\newblock On the inner automorphisms of a singular foliation.
\newblock {\em Mathematische Zeitschrift}, 2018.

\bibitem[Har77]{Hartshorne}
Robin Hartshorne.
\newblock {\em Algebraic Geometry}.
\newblock Graduate Texts in Mathematics, 52. Springer New York, New York, NY,
  1st ed. 1977. edition, 1977.

\bibitem[Hau14]{HauserHerwig}
Herwig Hauser.
\newblock Blowups and {R}esolution.
\newblock {\em Arxiv}, 2014.
\newblock \url{https://arxiv.org/abs/1404.1041}.

\bibitem[Her62]{Hermann}
Robert Hermann.
\newblock The differential geometry of foliations, {II}.
\newblock {\em Journal of Mathematics and Mechanics}, 11(2):303--315, 1962.
\newblock \url{http://www.jstor.org/stable/24900924}.

\bibitem[HR64]{Hironaka-Rossi}
H.~Hironaka and H.~Rossi.
\newblock On the equivalence of imbeddings of exceptional complex spaces.
\newblock {\em Math. Ann.}, 156:313--333, 1964.

\bibitem[Lav23]{SL}
Sylvain Lavau.
\newblock The modular class of a singular foliation.
\newblock {\em J. Geom. Phys.}, 192:32, 2023.
\newblock Id/No 104902.

\bibitem[Lev01]{zbMATH01623323}
Marc Levine.
\newblock Blowing up monomial ideals.
\newblock {\em J. Pure Appl. Algebra}, 160(1):67--103, 2001.

\bibitem[LGL22]{CLRL}
Camille Laurent-Gengoux and Ruben Louis.
\newblock Lie-{R}inehart algebras $\simeq$ acyclic {L}ie $\infty$-algebroids.
\newblock {\em Journal of {A}lgebra}, 594:1--53, 2022.
\newblock \url{https://doi.org/10.1016/j.jalgebra.2021.11.023}.

\bibitem[LGLR24]{LLL}
Camille Laurent-Gengoux, Ruben Louis, and Leonid Ryvkin.
\newblock An invitation to singular foliations.
\newblock {\em ArXiv}, 2024.

\bibitem[LGPV12]{LPV}
Camille Laurent-Gengoux, Anne Pichereau, and Pol Vanhaecke.
\newblock {\em Poisson structures}, volume 347 of {\em Grundlehren Math. Wiss.}
\newblock Berlin: Springer, 2012.

\bibitem[LGR22]{CLG-Ryv}
Camille Laurent-Gengoux and Leonid Ryvkin.
\newblock The holonomy of a singular leaf.
\newblock {\em Selecta Mathematica}, 28, 2022.

\bibitem[LLS20]{LLS}
Camille {Laurent-Gengoux}, Sylvain {Lavau}, and Thomas {Strobl}.
\newblock {The universal {L}ie \(\infty\)-algebroid of a singular foliation}.
\newblock {\em {Doc. Math.}}, 25:1571--1652, 2020.

\bibitem[Lou23a]{RL_symmetries}
Ruben Louis.
\newblock On symmetries of singular foliations.
\newblock {\em J. Geom. Phys.}, 189:31, 2023.
\newblock Id/No 104833.

\bibitem[Lou23b]{LouisRuben2023UhLa}
Ruben Louis.
\newblock Universal higher {L}ie algebras of singular spaces and their
  symmetries.
\newblock {\em Arxiv}, 2023.

\bibitem[Lou24]{louis2024nash}
Ruben Louis.
\newblock On nash resolution of (singular) {L}ie algebroids.
\newblock {\em arXiv-2404.08840}, 2024.
\newblock \url{ https://doi.org/10.48550/arXiv.2301.08706}.

\bibitem[LU81]{D.T}
D.T. L{\^e} and T.~Urabe.
\newblock {\em Geometry of {T}angents on {S}ingular {S}paces and {C}hern
  {C}lasses}.
\newblock Kyoto {U}niversity. {D}epartment of {M}athematics. {L}ectures in
  mathematics. Kinokuniya Company, 1981.

\bibitem[Mac05]{Mackenzie-Kirill}
Kirill~C. Mackenzie.
\newblock {\em General theory of Lie groupoids and Lie algebroids / Kirill C.H.
  Mackenzie,...}
\newblock London Mathematical Society lecture note series. Cambridge University
  press, Cambridge, C 2005.

\bibitem[Moh21]{MohsenOmar}
Omar Mohsen.
\newblock Blow-up groupoid of singular foliations.
\newblock {\em Arxiv}, 2021.
\newblock \url{https://arxiv.org/abs/2105.05201}.

\bibitem[Moo01]{Moody}
John~Atwell Moody.
\newblock Divisibility of ideals and blowing up.
\newblock {\em Ill. J. Math.}, 45(1):163--165, 2001.

\bibitem[Mor13]{MoryeArchanaS}
Archana~S. Morye.
\newblock Note on the {S}erre-{S}wan theorem.
\newblock {\em Mathematische Nachrichten}, 286(2-3):272--278, 2013.

\bibitem[Nag66]{Nagano}
T.~Nagano.
\newblock Linear differential systems with singularities and an application to
  transitive {Lie} algebras.
\newblock {\em J. Math. Soc. Japan}, 18:398--404, 1966.

\bibitem[Nis19]{NistorVictor2019DoLg}
Victor Nistor.
\newblock Desingularization of {L}ie groupoids and pseudodifferential operators
  on singular spaces.
\newblock {\em Communications in analysis and geometry}, 27(1):161--209, 2019.

\bibitem[Nob75]{Nobile}
A.~Nobile.
\newblock {Some properties of the Nash blowing-up.}
\newblock {\em Pacific Journal of Mathematics}, 60(1):297 -- 305, 1975.

\bibitem[PBTS00]{JPTS}
Jean {P}aul {B}rasselet and {T}atsuo {S}uwa.
\newblock Nash residues of singular holomorphic foliations.
\newblock {\em Asian Journal of Mathematics}, 4:37--50, 2000.

\bibitem[PTW21]{Wang-Tang}
H~Posthuma, X~Tang, and K~Wang.
\newblock Resolutions of proper riemannian {L}ie groupoids.
\newblock {\em International mathematics research notices}, 2021(2):1249--1287,
  2021.

\bibitem[Ros68]{Rossi}
Hugo Rossi.
\newblock Picard variety of an isolated singular point.
\newblock {\em Rice Univ. Stud.}, 54(4):63--73, 1968.

\bibitem[\'S03]{Sniatycki}
Jedrzej \'Sniatycki.
\newblock Orbits of families of vector fields on subcartesian spaces.
\newblock {\em Annales de l'Institut Fourier}, 53(7):2257--2296, 2003.

\bibitem[Ser89]{Ali-Sinan}
Sinan Sert\"oz.
\newblock Residues of singular holomorphic foliations.
\newblock {\em Compositio Mathematica}, 70(3):227--243, 1989.

\bibitem[Sin23]{Karandeep}
Karandeep~J. Singh.
\newblock On the universal {{\(L_\infty \)}}-algebroid of linear foliations.
\newblock {\em J. Lie Theory}, 33(3):925--952, 2023.

\bibitem[Ste74]{Stepan1}
P.~Stefan.
\newblock Accessibility and foliations with singularities.
\newblock {\em Bull. Am. Math. Soc.}, 80:1142--1145, 1974.

\bibitem[Ste80]{Stepan2}
P.~Stefan.
\newblock Integrability of systems of vector fields.
\newblock {\em J. Lond. Math. Soc., II. Ser.}, 21:544--556, 1980.

\bibitem[Swa62]{SwanRichardG}
Richard~G. Swan.
\newblock Vector {B}undles and {P}rojective modules.
\newblock {\em Transactions of the American Mathematical Society},
  105(2):264--277, 1962.
\newblock \url{https://doi.org/10.1090/S0002-9947-1962-0143225-6 }.

\bibitem[Wan19]{wang2019explicitdescriptioncentralizersmatrix}
Tianhao Wang.
\newblock Explicit description of centralizers for a matrix.
\newblock {\em ArXiv}, 2019.

\bibitem[Wey03]{Weyman_2003}
Jerzy Weyman.
\newblock {\em Grassmannians and Flag Varieties}, page 85–109.
\newblock Cambridge Tracts in Mathematics. Cambridge University Press, 2003.

\end{thebibliography}

\end{document}